\newcommand{\N}{\mathbb{N}}
\newcommand{\R}{\mathbb{R}}
\newcommand{\C}{\mathbb{C}}
\renewcommand{\H}{\mathbb{H}}
\newcommand{\df}{\,\mathrm{d}}
\newcommand{\M}{\mathbb{M}}
\newcommand{\E}{\mathbb{E}}
\newcommand{\LL}{\mathbb{L}}
\newcommand{\arcsinh}{\mathop{\rm arcsinh}\nolimits}
\newcommand{\vol}{\mathop{\rm vol}\nolimits}
\newcommand{\area}{\mathop{\rm area}\nolimits}
\newcommand{\Length}{\mathop{\rm length}\nolimits}
\newcommand{\Nil}{\mathrm{Nil}_3}
\newcommand{\PSL}{\widetilde{\mathrm{SL}}_2(\mathbb{R})}
\renewcommand{\div}{\mathop{\rm div}\nolimits}
\newtheorem{theorem}{Theorem}
\newtheorem*{theorem*}{Theorem}
\newtheorem{proposition}{Proposition}
\newtheorem{corollary}{Corollary}
\newtheorem{lemma}{Lemma}
\theoremstyle{definition}
\theoremstyle{remark}
  \newtheorem{remark}{Remark}
\numberwithin{equation}{section}
\setlist[itemize]{parsep=0.4em}
\setlist[enumerate]{parsep=0.4em}
\title{Height and area estimates for constant mean curvature graphs in $\E(\kappa,\tau)$-spaces}
\date{}
\author{Jos\'{e} M. Manzano}
\address{Dipartimento di Scienze Matematiche\\
Politecnico di Torino \\ Corso Duca degli Abruzzi, 24 -- 10129 Torino (Italy)}
\email{manzanoprego@gmail.com}
\author{Barbara Nelli}
\address{Dipartimento di Ingegneria e Scienze dell'Informazione e Matematica\\
Universit\`{a} dell'Aquila\\
Via Vetoio Loc. Coppito -- 67100 L'Aquila (Italy)}
\email{nelli@univaq.it}
\thanks{This research was partially supported by PRIN-2010NNBZ78-009. The first author was also partially supported by Spanish MCyT-Feder research project MTM2011-22547. The second author would like to thank Massimiliano Pontecorvo for the kind hospitality during the preparation of this work.}
\subjclass[2010]{Primary 53A10; Secondary 53C30}
\keywords{Minimal surfaces, constant mean curvature, homogeneous 3-manifolds, Heisenberg group, area estimates, height estimates}
\begin{document}

\begin{abstract}
 We obtain area growth estimates for constant mean curvature graphs in $\E(\kappa,\tau)$-spaces with $\kappa\leq 0$, by finding sharp upper bounds for the volume of geodesic balls in $\E(\kappa,\tau)$. We focus on complete graphs and graphs with zero boundary values. For instance, we prove that entire graphs in $\E(\kappa,\tau)$ with critical mean curvature have at most cubic intrinsic area growth. We also obtain sharp upper bounds for the extrinsic area growth of graphs with zero boundary values, and study distinguished examples in detail such as invariant surfaces, $k$-noids and ideal Scherk graphs. Finally we give a relation between height and area growth of minimal graphs in the Heisenberg space ($\kappa=0$), and prove a Collin-Krust type estimate for such minimal graphs.
\end{abstract}

\maketitle

\section{Introduction}

Constant mean curvature surfaces in simply-connected homogeneous $3$-manifolds have been object of study of many authors in the last decade. Special attention has been given to those $3$-manifolds with isometry group of dimension at least $4$, which are classified in a $2$-parameter family $\E(\kappa,\tau)$, $\kappa,\tau\in\R$, with the exception of the hyperbolic space $\H^3$. Also $\E(\kappa,\tau)$ admits a Riemannian submersion with bundle curvature $\tau$ over $\M^2(\kappa)$, the simply-connected $2$-dimensional manifold with constant curvature $\kappa$, such that the fibers of the submersion are the integral curves of a distinguished unit Killing vector field~\cite{Dan,Man}. It arises the natural question of studying graphs (i.e., sections of the Riemannian submersion) with constant mean curvature, over domains of $\M^2(\kappa)$, as a non-parametric version of the constant mean curvature condition (see Section~\ref{minimal-graph-section}).

 A fundamental tool in the comprehension of surfaces in $\E(\kappa,\tau)$-spaces is Daniel correspondence~\cite{Dan}, which couples isometric constant mean curvature surfaces in different $\E(\kappa,\tau)$-spaces with different constant mean curvatures, and respects locally the graphical condition.

We will actually focus on the case of constant mean curvature graphs in $\E(\kappa,\tau)$, for any $\tau\geq 0$ and $\kappa\leq 0$. If $\tau\neq 0$, this restriction leads to the Heisenberg group $\Nil(\tau)=\E(0,\tau)$ and $\PSL$ for $\kappa<0$. If $\tau=0$, one has $\E(0,0)=\R^3$ and $\E(\kappa,0)=\H^2(\kappa)\times\R$ for $\kappa<0$. We are skipping the case $\kappa>0$, i.e., when the submersion is over the round sphere $\mathbb{S}^2(\kappa)$, since the results we are looking for do not make sense in that case.

Our first aim is to evaluate how fast the area of a minimal graph in $\E(\kappa,\tau)$ can grow. Up to our knowledge, questions related to area growth of surfaces in $\E(\kappa,\tau)$ have not been tackled yet. 

Here we will propose three different notions of area growth of a surface $\Sigma\subset\E(\kappa,\tau)$ as the growth of the function $R\mapsto\area(\Sigma\cap A_R)$, where $A_R$ is either the geodesic ball in $\Sigma$ of radius $R$ (intrinsic area growth), or the extrinsic 
geodesic ball in $\E(\kappa,\tau)$ of radius $R$ (extrinsic area growth), or a solid cylinder of radius $R$ (i.e., the preimage of a disk of radius $R$ in $\M^2(\kappa)$ by the submersion) in $\E(\kappa,\tau)$ (cylindrical area growth). These three notions are independent of the point where the sets $A_R$ are centered, and it is easy to see that intrinsic area growth is always slower than the extrinsic one, which is in turn slower than the cylindrical one.

Our first estimate on the area will rely on a detailed study of the volume of geodesic balls in $\E(\kappa,\tau)$. For instance, we obtain that geodesic balls $B_R$ of radius $R$ in $\Nil(\tau)$ have quartic area growth, in the sense that $R^{-4}\vol(B_R)$ remains bounded between two positive constants, when $R$ is bounded away from zero (Proposition~\ref{volume-geod-nil}), in contrast to the case of $\R^3$, where this growth is cubic. Moreover, we give explicit expressions for the geodesics of $\E(\kappa,\tau)$, $\kappa\leq 0$, in terms of initial conditions.

The key idea in the extrinsic estimates of the area is to get a relation between the area of the intersection of the  surface with an extrinsic ball $B_R$ and geometric quantities computed on the base $\M^2(\kappa)$  (Lemmas~\ref{lemma:area-estimate} and~\ref{lemma:area-estimate2}). The main result (Theorem~\ref{thm:area-growth-minimal}) states that, if $\Sigma\subset\E(\kappa,\tau)$ is a minimal graph over a domain $\Omega\subset\M^2(\kappa)$ such that either $\Sigma$ extends to $\partial\Omega$ with zero boundary values, or $\Length(\partial(\Omega\cap D_R))$ is suitably controlled, being $D_R=\pi(B_R)$ a disk of radius $R$ in $\M^2(\kappa)$, then:
\begin{enumerate}[label=(\alph*)]
	\item If $\E(\kappa,\tau)=\R^3$, then $\Sigma$ has at most quadratic extrinsic area growth.
	\item If $\E(\kappa,\tau)=\Nil(\tau)$, then $\Sigma$ has at most cubic extrinsic area growth.
 	\item If $\kappa<0$, then $\Sigma$ has at most extrinsic area growth of order $R\mapsto R\,e^{\sqrt{-\kappa}R}$.
\end{enumerate}

As a first consequence of the extrinsic estimate, we are able to analyze the intrinsic area growth of a complete  graph $\Sigma$ with constant mean curvature $H$ in $\E(\kappa,\tau)$ (Theorem~\ref{thm:area-graphs}). If $4H^2+\kappa>0$, then  it is proved in~\cite{MPR} that $\kappa>0$ and $\Sigma=\mathbb{S}^2(\kappa)\times\{t_0\}$ in $\mathbb{S}^2(\kappa)\times\R$. If $4H^2+\kappa=0$ (i.e., $\Sigma$ has critical mean curvature), then  $\Sigma$ has at most cubic intrinsic area growth. Finally, if the mean curvature is subcritical ($4H^2+\kappa<0$), then the intrinsic area grows at most exponentially as $Re^{R\sqrt{-\kappa-4H^2}}$. The same estimates hold for extrinsic area growth if $H=0$.

As a second application, we get some intrinsic and extrinsic properties of important examples in the theory, such as horizontal umbrellas, minimal graphs in $\Nil(\tau)$ invariant by a one parameter family of ambient isometries (classified by Figueroa-Mercuri-Pedrosa~\cite{FMP}), symmetric $k$-noids with subcritical constant mean curvature in $\H^2(\kappa)\times\R$~\cite{MorRod,Plehnert,Pyo}, and ideal Scherk graphs (i.e., graphs on unbounded domains of $\H^2(\kappa)$ bounded by ideal polygons with a finite number of sides, taking $\pm\infty$-values alternately along the boundar) with subcritical constant mean curvature~\cite{CR,FM,Me}. In the case of $k$-noids and ideal Scherk graphs, we conclude that they have intrinsic quadratic area growth, so they are parabolic (i.e., the only non-positive subharmonic functions on the surface are the constant ones) by a classical result of Cheng and Yau~\cite{CY}.

As a byproduct of our technique we obtain some intermediate results of interest by themselves. On the one hand, we get that the area of the projection to $\M^2(\kappa)$, $k\leq 0$, of a complete graph with constant mean curvature $H$ is finite if and only if it is an ideal Scherk graph (Proposition~\ref{prop:finite-area}), which allows us to prove that the class of ideal Scherk graphs is preserved by the Daniel correspondence (Corollary~\ref{coro:scherk-preserved}). On the other hand, we discover that Figueroa-Mercuri-Pedrosa examples are parabolic though their intrinsic area growth is exactly cubic (Proposition \ref{FMP-cubic}). We employ this to correct a small mistake in the Bernstein-type theorem for parabolic horizontal graphs given by the first author, P\'{e}rez and Rodr\'{i}guez~\cite[Theorem 3]{MPR}: we deduce that a complete parabolic minimal surface in $\Nil(\tau)$ which is transversal to a non-vertical right-invariant Killing vector field is either a plane or congruent to an invariant surface (Theorem~\ref{thm:correction}).

Lower bounds on the area come from analyzing the cylindrical area growth (see 
Section~\ref{sec:cylindrical-area}), which is a suitable tool to study the area of entire graphs. We obtain that an entire graph in $\Nil(\tau)$ has at least cubic cylindrical area growth, while in $\PSL$ it has at least cylindrical area growth of order $e^{R\sqrt{-\kappa}}$ (Corollary \ref{coro:cylindrical-growth}). We emphasize that these estimates do not have assumptions on the mean curvature of the graph, and follow from a classical application of the divergence theorem.

It is worthwhile noticing that the cylindrical area growth of entire minimal graphs in $\Nil(\tau)$ is at least cubic, and the extrinsic one is at most cubic. Hence proving that they coincide for some entire minimal graph $\Sigma$, would ensure that $\Sigma$ has exactly cubic extrinsic area growth. Here height estimates come in handy and tell us that the slower the height of the surface grows, the better the extrinsic area growth is controlled. In particular, if the height of an entire minimal graph $\Sigma\subset\Nil(\tau)$ grows at most quadratically with respect to the distance to the origin in the base $\R^2$, then $\Sigma$ has exactly cubic extrinsic area growth (Corollary~\ref{coro:height-area-estimate-nil}). Height is always measured with respect to the usual zero section in $\Nil(\tau)$, so this situation applies to many known explicit examples of entire minimal graphs (e.g., see~\cite{C,Dan2,FMP,NST}). In this sense, the last part of the paper deals with height estimates for minimal graphs in $\Nil(\tau)$.

On the one hand, we obtain that the height of an entire minimal graph $\Sigma\subset\Nil(\tau)$ grows at most cubically, which ensures that the extrinsic area growth of $\Sigma$ is between quadratic and cubic (Theorem~\ref{last-area-estimate}). This is achieved by getting a global gradient estimate for entire spacelike graphs in the Lorentz-Minkowski space $\LL^3$ with positive constant mean curvature (Lemma~\ref{lemma:gradient-estimate-L3}), based on the work of Cheng and Yau~\cite{CY2} and Treibergs~\cite{Treibergs} through the Calabi-type correspondence by Lee~\cite{Lee}. It is worth emphasizing that our gradient estimate for $\Sigma$ is sharper than the general estimates for the angle function in Killing submersions given by Rosenberg, Souam and Toubiana in~\cite{RST}. As a consequence, we improve a result of Espinar~\cite[Corollary 5.2]{Espinar} by showing that a complete orientable stable surface with constant mean curvature $H$ in $\E(\kappa,\tau)$, with $\tau\neq 0$ and $4H^2+\kappa\geq0$, whose angle function is square-integrable must be a vertical cylinder (Corollary~\ref{coro:estability}).

On the other hand, we complete the study of the height by getting a sharp Collin-Krust type estimate~\cite{CK}, which establishes that the height of a minimal graph in $\Nil(\tau)$ with zero boundary values over an unbounded domain cannot grow less than linearly (Theorem \ref{collin-krust-theo} and Corollary~\ref{coro:sublineal-height}). Essentially we prove that all such graphs grow at least as catenoids (i.e., the situation is similar to $\R^3$, where Collin and Krust proved at least logarithmic height growth~\cite{CK}; see also the generalization by Leandro and Rosenberg~\cite{LR}).

As a final remark we point out that our results also yield a new estimate for minimal surfaces in $\R^3$ (Theorem \ref{thm:area-growth-minimal}) and that some of them can be directly generalized to the setting of Killing submersions (Remark \ref{rmk:generalization}). 

The paper is organized as follows. In the second section we deal with geodesic balls in $\E(\kappa,\tau)$ and describe their shape and volume. In the third section we recall the equation of a minimal graph in $\E(\kappa, \tau)$, and describe many known examples. The fourth and fifth sections contain the most important results of the paper about the intrinsic, extrinsic and cylindrical area growth of constant mean curvature surfaces in $\E(\kappa,\tau)$-spaces. In the sixth section we obtain the height estimates \`{a} la  Collin-Krust. Table \ref{table-examples} summarizes our principal results about area growth.

\begin{table}[!h]
\centering
\begin{tabular}{|c|c|c||c|c|c|c|}  
\hline
Surface & Curvature & Space & EAG & CAG & IAG & CT\\
\hline\hline
\multirow{2}{*}{Umbrellas}& \multirow{2}{*}{$H=0$} &$\Nil(\tau)$  &  $R^3$&$R^3$ &$R^3$ & \multirow{2}{*}{Hyp.}\\  
\cline{3-6}
&&$\kappa<0$  & $e^{R\sqrt{-\kappa}}$ &$e^{R\sqrt{-\kappa}}$ & $e^{R\sqrt{-\kappa}}$&\\  
\hline
FMP surfaces& $H=0$& $\Nil(\tau)$ &$R^3$  & $R^3$ & $R^3$ &\multirow{3}{*}{Par.}\\
\cline{1-6}
Ideal Scherk& \multirow{2}{*}{$4H^2+\kappa<0$} & $\E(\kappa, \tau)$&$\leq R^2$& & \multirow{2}{*}{$\leq R^2$} &\\
\cline{1-1}\cline{3-3}
$k$-noids& & $\H^2(\kappa)\times\R$& ($H=0$)&  & &\\
\hline
\multirow{4}{*}{Entire graphs}& $H=0$ &$\Nil(\tau)$ & $\geq R^2, \leq R^3$  &$\geq R^3, \leq R^4$ & $\leq R^3$ &\\
\cline{2-7}
 & $4H^2+\kappa=0$&\multirow{3}{*}{$\kappa<0$} & &$\geq e^{R\sqrt{-\kappa}}$  & $\leq R^3$ & \\
\cline{2-2}\cline{4-7}
 & $4H^2+\kappa<0$& & & $\geq e^{R\sqrt{-\kappa}}$& $\leq Re^{R\sqrt{-\kappa-4H^2}}$& \\
 \cline{2-2}\cline{4-7}
 & $H=0$ & & $\leq Re^{R\sqrt{-\kappa}}$  & $\geq e^{R\sqrt{-\kappa}}$  & $\leq Re^{R\sqrt{-\kappa}}$ & \\
\hline
Graphs with & \multirow{3}{*}{$H=0$} & $\R^3$ &  $\leq R^2$&  & $\leq R^2$ &\\
\cline{3-7}
zero boundary & &$\Nil(\tau)$ &  $\leq R^3$&  & $\leq R^3$ &\\
\cline{3-7}
values & &$\kappa<0$ &  $\leq Re^{R\sqrt{-\kappa}}$&  & $\leq Re^{R\sqrt{-\kappa}}$ &\\
\hline
\end{tabular}
\medskip
\caption{EAG=Extrinsic Area Growth, CAG=Cylindrical Area Growth,  IAG= Intrinsic Area Growth, CT= Conformal Type (hyperbolic or parabolic).}
\label{table-examples}
\end{table}

\section{Geodesics in $\E(\kappa,\tau)$-spaces.}

Given $\kappa,\tau\in\R$, we will consider the model for the $3$-manifold $\E(\kappa,\tau)$ as
\[\mathbb{E}(\kappa,\tau)=\left\{(x,y,z)\in\R^3:1+\tfrac{\kappa}{4}(x^2+y^2)>0\right\},\]
endowed with the only Riemannian metric such that
\begin{align*}
 E_1&=\frac{\partial_x}{\lambda}-\tau y\,\partial_z,& 
 E_2&=\frac{\partial_y}{\lambda}+\tau x\,\partial_z,&
 E_3&=\partial_z,
\end{align*}
defines a global orthonormal frame, where
\[\lambda(x,y,z)=\left(1+\frac{\kappa}{4}(x^2+y^2)\right)^{-1},\qquad (x,y,z)\in\E(\kappa,\tau).\]
The projection to the first two components $(x,y,z)\mapsto(x,y)$, is a Riemannian submersion with bundle curvature $\tau$ onto $\mathbb{M}^2(\kappa)$, the simply-connected surface with constant curvature $\kappa$. The fibers of the submersion are geodesics, and coincide with the integral curves of the unit Killing vector field $E_3$. The Levi-Civita connection $\overline\nabla$ on $\E(\kappa,\tau)$ in the frame $\{E_1,E_2,E_3\}$ is given by 
\begin{equation}\label{eqn:levi-civita}
\begin{array}{lclcl}
\overline\nabla_{E_1}E_1=\frac{\kappa}{2}yE_2,&&\overline\nabla_{E_1}E_2=-\frac{\kappa}{2}E_1+\tau E_3,&&\overline\nabla_{E_1}E_3=-\tau E_2,\\
\overline\nabla_{E_2}E_1=-\frac{\kappa}{2}xE_2-\tau E_3,&&\overline\nabla_{E_2}E_2=\frac{\kappa}{2}xE_1,&&\overline\nabla_{E_2}E_3=\tau E_1,\\
\overline\nabla_{E_3}E_1=-\tau E_2,&&\overline\nabla_{E_3}E_2=\tau E_1,&&\overline\nabla_{E_3}E_3=0.
\end{array}
\end{equation}

Let us describe the equations of the geodesics in $\E(\kappa, \tau)$.  Given a curve $\gamma:\R\to\E(\kappa,\tau)$, it can be expressed as $\gamma(t)=(x(t),y(t),z(t))\in\R^3$ so $\gamma'(t)=x'(t)\partial_x+y'(t)\partial_y+z'(t)\partial_z=\sum_{k=1}^3a_k(t)E_k$, for some functions $a_k:\R\to\R$. It is straightforward to check that
\begin{align*}
a_1&=\frac{x'}{1+\frac{\kappa}{4}(x^2+y^2)},&
a_2&=\frac{y'}{1+\frac{\kappa}{4}(x^2+y^2)},&
a_3&=z'+\tau\frac{yx'-xy'}{1+\frac{\kappa}{4}(x^2+y^2)}.
\end{align*}
By means of the Levi-Civita connection~\eqref{eqn:levi-civita}, the condition $\overline\nabla_{\gamma'}{\gamma'}=0$ is easily developed. We conclude  that $\gamma$ is a geodesic if and only if $(a_1,a_2,a_3)$ is a solution to the following \textsc{ode} system:
\begin{equation}\label{eqn:ode-geodesics}
\left\{\begin{array}{l}
a_1'=-\frac{\kappa}{2}xa_2^2+\frac{\kappa}{2}ya_1a_2-2\tau a_2a_3,\\
a_2'=-\frac{\kappa}{2}ya_1^2+\frac{\kappa}{2}xa_1a_2+2\tau a_1a_3,\\
a_3'=0.
\end{array}\right.
\end{equation}
On the other hand, we know that if $\gamma$ is a geodesic, then $\pi\circ\gamma:\R\to\mathbb M^2(\kappa)$ has constant geodesic curvature and constant speed, and the angle function $a_3=\langle\gamma',E_3\rangle$ is also constant~\cite{Man}, which allows us to obtain the explicit solutions of~\eqref{eqn:ode-geodesics} given below.

In the sequel we will denote by $B_R(p)$ (resp. $D_R(x)$) the geodesic ball of $\E(\kappa,\tau)$ (resp. $\M^2(\kappa)$) of radius $R\geq 0$ centered at $p\in\E(\kappa,\tau)$ (resp. $x\in\M^2(\kappa)$).

\subsection{Geodesics balls in $\Nil(\tau)$}
Given $\phi\in[0,\pi]$, $\phi\neq\frac{\pi}{2}$, and $\theta\in\R$, it is straightforward to check that
\begin{equation}\label{eqn:geodesics}
\begin{aligned}
 x(t)&=\frac{\tan(\phi)}{2\tau}\left(\cos(2\tau\cos(\phi)t+\theta)-\cos(\theta)\right),\\
 y(t)&=\frac{\tan(\phi)}{2\tau}\left(\sin(2\tau\cos(\phi)t+\theta)-\sin(\theta)\right),\\
 z(t)&=\frac{1+\cos^2(\phi)}{2\cos(\phi)}\,t-\frac{\tan^2(\phi)}{4\tau}\sin(2\tau\cos(\phi)t),
\end{aligned}
\end{equation}
defines a complete geodesic in $\Nil(\tau)$ such that $x(0)=y(0)=z(0)=0$, and $x'(0)=-\sin(\theta)\sin(\phi)$, $y'(0)=\cos(\theta)\sin(\phi)$ and $z'(0)=\cos(\phi)$. This shows that these are all the geodesics in $\Nil(\tau)$ passing through the origin with unit length, except for the horizontal ones, which are straight lines given by $t\mapsto(\cos(\theta)t,\sin(\theta)t,0)$ and correspond to the limit value of the parameter $\phi=\frac{\pi}{2}$.

Given $R>0$, we are interested in calculating the maximum height of $B_R(0)$, the geodesic ball of radius $R$ in $\Nil(\tau)$ centered at the origin. This is equivalent to find a value of $\phi\in[0,\pi]$ maximizing $z(R)$, where $z(t)$ is the function given by~\eqref{eqn:geodesics}. It is not difficult to prove that $z(R)>0$ if and only if $\phi\in[0,\frac{\pi}{2}[$. Since $\lim_{\phi\to\frac{\pi}{2}}z(R)=0$, we conclude that the maximum is attained for some value of $\phi\in[0,\frac{\pi}{2}[$, and we will restrict ourselves to this interval. After considering the change of variable $s=2\tau R\cos(\phi)\in\ ]0,2\tau R]$, we can reduce the problem to maximize the real-valued function $\zeta_R:\ ]0,2\tau R]\to\R$ given by
\[\zeta_R(s)=z(R)=\frac{s(s^2+4\tau^2R^2)+(s^2-4\tau^2R^2)\sin(s)}{4\tau s^2}.\]
In order to get the critical points of $\zeta_R$, we calculate
\begin{equation}\label{eqn:geodesics2}
\zeta_R'(s)=\frac{s(s^2-4\tau^2R^2)(1+\cos(s))+8\tau^2R^2\sin(s)}{4\tau s^3}.
\end{equation}
The equation $\zeta'_R(s)=0$ has two different kinds of solutions:
\begin{itemize}
 \item On the one hand, the values $s\in\ ]0,2\tau R]$ satisfying $\cos(s)=-1$. Observe that for such a value $s$, one gets that $\sin(s)=0$ so $\zeta_R(s)=\frac{s^2+4\tau^2R^2}{4\tau s}$. This last expression is a decreasing function of $s$ for $s\in[0,2\tau R]$, which implies that, among all critical values of $\zeta_R$ with $\cos(s)=-1$, the one where $\zeta_R$ has a greater value is the smaller one, i.e., $s=\pi$. Note that this only makes sense for $2\tau R>\pi$; otherwise there are no critical values of $\zeta_R$ with $\cos(s)=-1$.
 \item On the other hand, if $s\in\ ]0,2\tau R]$ is a critical value of $\zeta_R$ such that $\cos(s)\neq-1$, we can deduce from making~\eqref{eqn:geodesics2} equal zero that
\begin{equation}\label{eqn:geodesics3}
\tan\left(\frac{s}{2}\right)=\frac{\sin(s)}{1+\cos(s)}=\frac{s(4\tau^2R^2-s^2)}{8\tau^2R^2}.
\end{equation}
Note that equation~\eqref{eqn:geodesics3} has many solutions for $R$ big enough. Moreover, it allows us to work out
\begin{equation}\label{eqn:geodesics4}
\begin{aligned}
\sin(s)&=\frac{2\tan(\frac{s}{2})}{1+\tan^2(\frac{s}{2})}=\frac{16\tau^2R^2 s \left(4\tau^2R^2-s^2\right)}{16\tau^4R^4 \left(s^2+4\right)-8\tau^2 R^2
   s^4+s^6},\\
\cos(s)&=\frac{1-\tan^2(\frac{s}{2})}{1+\tan^2(\frac{s}{2})}=\frac{128\tau^4R^4}{16\tau^4R^4 \left(s^2+4\right)-8\tau^2R^2 s^4+s^6}-1.
\end{aligned}
\end{equation}
Taking derivatives in~\eqref{eqn:geodesics2} and using~\eqref{eqn:geodesics4}, the second derivative of $\zeta_R$ at a critical point $s$ such that $\cos(s)\neq-1$ can be written as
\begin{align*}
\zeta_R''(s)&=\frac{\left(4\tau^2R^2 \left(s^2-6\right)-s^4\right) \sin (s)+8\tau^2 R^2 s (2
   \cos (s)+1)}{4\tau s^4}\\
&=\frac{2\tau R^2(4\tau^2R^2-s^2)^2+96\tau^3 R^4}{s^3(4\tau^2R^2-s^2)^2+64\tau^4R^4s}>0.
\end{align*}
In particular, $\zeta_R$ does not have a (local) maximum at $s$.
\end{itemize}
As a consequence of this discussion, the maximum value of $\zeta_R(s)$ is attained either at $s=\pi$ (for $2\tau R>\pi$) or at the extremal value $s=2\tau R$ (we recall that the other extremal value $s=0$ is discarded since $\lim_{s\to 0}\zeta_R(s)=0$). Taking into account that $\zeta_R(\pi)=\frac{\pi^2+4\tau^2R^2}{4\tau\pi}$ and $\zeta_R(2\tau R)=R$, we realize that $\zeta_R(\pi)>\zeta_R(2\tau R)$ if and only if $2\tau R>\pi$. Hence we get a following sharp approximation of the spheres by cylinders.

\begin{lemma}\label{lemma:geodesic-balls-nil}
Given $R>0$, let $B_R(0)$ be the geodesic ball in $\Nil(\tau)$ centered at the origin and let $D_R(0)=\{(x,y)\in\R^2:x^2+y^2<R^2\}$.
\begin{itemize}
 \item[(a)] If $R\leq\frac{\pi}{2\tau}$, then $B_R(0)\subset D_R(0)\times]-R,R[$. 
 \item[(b)] If $R>\frac{\pi}{2\tau}$, then $B_R(0)\subset D_R(0)\times]-\frac{\pi^2+4\tau^2R^2}{4\tau\pi},\frac{\pi^2+4\tau^2R^2}{4\tau\pi}[$.
\end{itemize}
\end{lemma}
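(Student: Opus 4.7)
The plan is to bound separately the horizontal projection and the vertical coordinate of an arbitrary $p=(x,y,z)\in B_R(0)$. Write $r=d(p,0)<R$; then $p$ lies on a unit-speed minimizing geodesic from the origin, so $p=(x(r),y(r),z(r))$ for one of the explicit geodesics \eqref{eqn:geodesics} (the exceptional horizontal straight lines contribute $z=0$ and are harmless). For the horizontal part, the projection $\pi:\Nil(\tau)\to\R^2$ is a Riemannian submersion, hence $1$-Lipschitz, so $\sqrt{x^2+y^2}=|\pi(p)|\leq d(p,0)<R$, placing $\pi(p)\in D_R(0)$ in both cases.

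For the vertical part, the involution $\phi\mapsto\pi-\phi$ reverses the sign of $z(r)$, so it suffices to maximize $z(r)$ over $\phi\in[0,\pi/2]$. The substitution $s=2\tau r\cos(\phi)\in\,]0,2\tau r]$ (applied with $r$ in place of $R$) reduces this to maximizing $\zeta_r(s)$, which is exactly the analysis performed in the paragraphs just above: interior critical points with $\cos(s)\neq-1$ are excluded as maxima by the sign $\zeta_r''>0$, while critical points with $\cos(s)=-1$ exist only when $2\tau r>\pi$, and the smallest such, $s=\pi$, gives the largest value $\zeta_r(\pi)=(\pi^2+4\tau^2r^2)/(4\tau\pi)$. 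Comparing with the endpoint value $\zeta_r(2\tau r)=r$ identifies $\max\zeta_r$ as $r$ when $2\tau r\leq\pi$ and as $(\pi^2+4\tau^2r^2)/(4\tau\pi)$ when $2\tau r>\pi$.

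Both branches are continuous (they agree with value $\pi/(2\tau)$ at $r=\pi/(2\tau)$) and strictly increasing in $r$, so they are bounded by their values at $r=R$, giving $|z|<R$ in case (a) and $|z|<(\pi^2+4\tau^2R^2)/(4\tau\pi)$ in case (b). The main substantive step, the critical-point classification of $\zeta_r$, has already been done in the text preceding the lemma, and the only auxiliary input is the monotonicity just noted. I would expect no further obstacle, the hard work being the second-derivative computation and the identification of $s=\pi$ as the dominating interior critical value for large radii; morally, the optimizer is the vertical fiber $\phi=0$ in case (a) and the horizontal spiral with $\cos(\phi)=\pi/(2\tau R)$ in case (b).
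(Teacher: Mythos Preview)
Your proposal is correct and follows essentially the same approach as the paper, which carries out the critical-point analysis of $\zeta_R$ in the text immediately preceding the lemma and then states the lemma as its consequence. The two points you make explicit---the horizontal bound via the $1$-Lipschitz property of the Riemannian submersion $\pi$, and the monotonicity in $r$ of the maximal height---are left implicit in the paper's discussion.
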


Using this estimate we are able to give an upper bound on the volume  growth of geodesic balls in $\Nil(\tau)$. We would like to point out that it seems hard to get an explicit computation of their volume due to the difficulties coming from conjugate values along geodesics. Some explicit results for small radii are given in \cite{JPP} and the references therein.

Let us consider the function $d:\R^3\to\R$ such that $d(p)$ is the distance in $\Nil(\tau)$ from $p$ to the origin, i.e., $B_R(0)=\{p\in\R^3:d(p)<R\}$. Given $\alpha>0$, we will also define the function 
\begin{equation}\label{eqn:distance-delta}
\delta_\alpha:\R^3\to\R,\qquad \delta_\alpha(p)=\max\left\{\sqrt{x^2+y^2},\tfrac{1}{\alpha}\sqrt{|z|}\right\}.
\end{equation}
Next lemma shows that $d$ and $\delta_\alpha$ are equivalent, away from the origin, in the sense of distances. Note that the ball of radius $R$ for $\delta_\alpha$ is given by $\{p\in\R^3:\delta_\alpha(p)<R\}=D_R(0)\times\ ]-\alpha^2R^2,\alpha^2R^2[$, which motivates this definition in view of item (b) in Lemma~\ref{lemma:geodesic-balls-nil}.

\begin{lemma}\label{lemma:equivalence-distances}
Given $\alpha>0$, there exist constants $M,m>0$ such that
\[m\, d(p)\leq\delta_\alpha(p)\leq M\, d(p),\]
for all $p\in\Nil(\tau)$ with $d(p)>\frac{\pi}{2\tau}$.
\end{lemma}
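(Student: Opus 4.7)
The two inequalities rest on rather different principles: the upper bound $\delta_\alpha(p)\leq M\,d(p)$ is a direct consequence of Lemma~\ref{lemma:geodesic-balls-nil}, whereas the lower bound $m\,d(p)\leq\delta_\alpha(p)$ requires constructing an explicit curve joining the origin to $p$ and bounding its length by a multiple of $\delta_\alpha(p)$.

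For the upper bound, set $R=d(p)$, so that $p\in\overline{B_R(0)}$; since $R>\pi/(2\tau)$ by hypothesis, item~(b) of Lemma~\ref{lemma:geodesic-balls-nil} yields $\sqrt{x^2+y^2}\leq R$ and $|z|\leq(\pi^2+4\tau^2R^2)/(4\tau\pi)$. The inequality $R>\pi/(2\tau)$ is equivalent to $\pi^2/(4\tau^2)<R^2$, which sharpens the second bound to $|z|\leq 2\tau R^2/\pi$, hence $\sqrt{|z|}\leq R\sqrt{2\tau/\pi}$. Taking $M=\max\{1,\alpha^{-1}\sqrt{2\tau/\pi}\}$ we conclude $\delta_\alpha(p)\leq M\,d(p)$.

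For the lower bound, I would estimate $d(p)$ by the length of an explicit two-piece path from $(0,0,0)$ to $p=(x,y,z)$: first the horizontal Euclidean segment from the origin to $(x,y,0)$, which is a geodesic of $\Nil(\tau)$ of length $\sqrt{x^2+y^2}$; then a geodesic from $(x,y,0)$ to $(x,y,z)$. The left translation $L_{(-x,-y,0)}$ of the Heisenberg group is an isometry sending this pair of endpoints to $(0,0,0)$ and $(0,0,z)$, so the second piece can be chosen of length $d(0,(0,0,z))$; by the triangle inequality, $d(p)\leq\sqrt{x^2+y^2}+d(0,(0,0,z))$.

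The main technical step is thus the estimate $d(0,(0,0,z))\leq\sqrt{2\pi/\tau}\,\sqrt{|z|}$ for all $z\in\R$. For $|z|\geq\pi/\tau$ I would extract it from the explicit family~\eqref{eqn:geodesics}: fixing $\theta=0$, imposing $x(t)=y(t)=0$ for the first positive $t$ forces $2\tau\cos(\phi)\,t=2\pi$, and then the relation $z(t)=\pi(1+\cos^2\phi)/(2\tau\cos^2\phi)$ can be inverted over $\cos\phi\in(0,1]$ to produce, for each such $z$, a geodesic of length $\sqrt{2\pi|z|/\tau-\pi^2/\tau^2}$. For $|z|\leq\pi/\tau$ the vertical fiber already gives $d(0,(0,0,z))\leq|z|\leq\sqrt{\pi/\tau}\,\sqrt{|z|}$. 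Combining both regimes and substituting, $d(p)\leq\sqrt{x^2+y^2}+\sqrt{2\pi/\tau}\,\sqrt{|z|}\leq(1+\alpha\sqrt{2\pi/\tau})\,\delta_\alpha(p)$, so $m=(1+\alpha\sqrt{2\pi/\tau})^{-1}$ works. The main obstacle is precisely this length estimate, since it captures the quadratic vertical growth of Heisenberg geodesic balls and explains the threshold $\pi/(2\tau)$ appearing in the hypothesis, below which the vertical distance is linear in $|z|$ and no such $\sqrt{|z|}$-bound can hold.
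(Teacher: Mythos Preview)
Your argument is correct; both inequalities are established soundly. For the upper bound you follow exactly the paper's route via Lemma~\ref{lemma:geodesic-balls-nil}, only making the constant explicit.

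For the lower bound, however, your approach differs genuinely from the paper's. The paper invokes the Carnot--Carath\'eodory distance $d_{\mathrm{CC}}\geq d$ and observes that both $d_{\mathrm{CC}}$ and $\delta_\alpha$ are homogeneous of degree one under the Heisenberg dilations $(x,y,z)\mapsto(\lambda x,\lambda y,\lambda^2 z)$; the comparison then reduces to a compactness argument on the unit $d_{\mathrm{CC}}$-sphere. Your proof is instead fully constructive: you build an explicit broken path (horizontal segment followed by a geodesic loop returning to the $z$-axis at the right height) and read off its length from the closed-form geodesics~\eqref{eqn:geodesics}. Your route produces explicit constants and avoids any appeal to sub-Riemannian geometry, at the price of the elementary but not entirely obvious inversion of $z=\pi(1+\cos^2\phi)/(2\tau\cos^2\phi)$. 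The paper's route is shorter and more conceptual, and would apply verbatim to any gauge homogeneous under the dilations.

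One minor remark: your closing sentence misattributes the role of the hypothesis $d(p)>\pi/(2\tau)$. Your own argument shows that the lower bound $m\,d(p)\leq\delta_\alpha(p)$ holds for \emph{all} $p$ (the estimate $d(0,(0,0,z))\leq\sqrt{2\pi/\tau}\,\sqrt{|z|}$ is valid everywhere, as you proved in both regimes). It is the \emph{upper} bound $\delta_\alpha\leq M\,d$ that fails near the origin, since $\delta_\alpha(0,0,z)/d((0,0,0),(0,0,z))\sim\alpha^{-1}|z|^{-1/2}\to\infty$ as $z\to 0$; the paper makes exactly this observation in the remark following the lemma.
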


\begin{proof}
Let us suppose that $p=(x,y,z)\in\R^3$ is such that $R=d(p)>\frac{\pi}{2\tau}$. Then Lemma~\ref{lemma:geodesic-balls-nil} ensures that $p\in D_R(0)\times]-\frac{\pi^2+4\tau^2R^2}{4\tau\pi},\frac{\pi^2+4\tau^2R^2}{4\tau\pi}[$, so $x^2+y^2<R^2$ and $|z|<\frac{\pi^2+4\tau^2R^2}{4\tau\pi}$. From~\eqref{eqn:distance-delta} we get that $\delta_\alpha(p)\leq MR=M\, d(p)$, for some constant $M$ not depending on $p$.

In order to prove the other inequality, let us consider the Carnot-Carath\'{e}odory distance $d_{\mathrm{CC}}(p)$, defined as the infimum of the lengths of horizontal curves in $\Nil(\tau)$ joining $p$ and the origin. As the infimum is taken over horizontal curves, it is obvious that $d\leq d_{\mathrm{CC}}$, so we will prove that there exists $K>0$ such that $d_{\mathrm{CC}}(p)\leq K\, \delta_\alpha(p)$ for all $p$, and we will be done. Observe that both $d_{\mathrm{CC}}$ and $\delta_\alpha$ are homogeneous of degree 1 with respect to the dilations $(x,y,z)\mapsto(\lambda x,\lambda y,\lambda^2 z)$, i.e., they satisfy $d_{\mathrm{CC}}(\lambda x,\lambda y,\lambda^2 z)=\lambda d_{\mathrm{CC}}(x,y,z)$ and $\delta_\alpha(\lambda x,\lambda y,\lambda^2 z)=\lambda \delta_\alpha(x,y,z)$ for all $(x,y,z)\in\R^3$ and $\lambda\geq 0$. Hence, by using such dilations, we only need to prove that $d_{\mathrm{CC}}(p)\leq K\, \delta_\alpha(p)$ for all points $p\in\R^3$ with $d_{\mathrm{CC}}(p)=1$. This last assertion easily follows from the compactness of the unit sphere of $d_{\mathrm{CC}}$ and the continuity of $\delta_\alpha$ (this is a standard argument, see~\cite[Section 2.2]{Cap}).
\end{proof}

\begin{remark}
An inequality of the type $m\, d(p)\leq \delta_\alpha(p)$ is true for all $p\in\Nil(\tau)$, but the inequality $\delta_\alpha(p)\leq M\, d(p)$ is not valid in general since the quotient $\frac{\delta_\alpha}{d}$ is not bounded from above when one approaches to the origin. In fact, the statement of Lemma~\ref{lemma:equivalence-distances} is still valid after substituting $\frac{\pi}{2\tau}$ by any other positive real number.
\end{remark}

\begin{proposition}
\label{volume-geod-nil}
Geodesic balls in $\Nil(\tau)$ have quartic volume growth in the sense that, fixing $p\in\Nil(\tau)$, the function $R\mapsto R^{-4}\vol(B_R(p))$ is bounded between two positive constants independent of $p$, when $R$ is bounded away from zero.
\end{proposition}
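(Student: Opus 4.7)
The plan is to sandwich $B_R(0)$ between two cylindrical sets whose Lebesgue volumes are both of order $R^4$, using Lemmas \ref{lemma:geodesic-balls-nil} and \ref{lemma:equivalence-distances}. Two preliminary remarks simplify matters: first, since $\Nil(\tau)$ is a Lie group whose left translations are isometries acting transitively, $\vol(B_R(p))$ is independent of $p$, so I may fix $p=0$; second, the orthonormal coframe dual to $\{E_1,E_2,E_3\}$ has volume form $dx\wedge dy\wedge dz$ (the change-of-basis matrix from $(\partial_x,\partial_y,\partial_z)$ to $(E_1,E_2,E_3)$ has unit determinant), so the Riemannian volume coincides with the Lebesgue measure on $\R^3$. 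This turns the geometric inclusions below directly into quantitative volume bounds.

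For $R>\frac{\pi}{2\tau}$, the upper bound is immediate from Lemma \ref{lemma:geodesic-balls-nil}(b):
\[
\vol(B_R(0))\leq \pi R^2\cdot \frac{\pi^2+4\tau^2R^2}{2\tau\pi}=\frac{R^2(\pi^2+4\tau^2R^2)}{2\tau},
\]
which is $O(R^4)$. For the matching lower bound in the same regime, I fix $\alpha=1$ in Lemma \ref{lemma:equivalence-distances} to obtain $m>0$ such that $m\,d(q)\leq \delta_1(q)$ whenever $d(q)>\frac{\pi}{2\tau}$. If a point $q\in\R^3$ satisfies $\delta_1(q)<mR$, then either $d(q)\leq\frac{\pi}{2\tau}<R$, or else $d(q)>\frac{\pi}{2\tau}$ and the lemma forces $d(q)\leq m^{-1}\delta_1(q)<R$; in either case $q\in B_R(0)$. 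Hence $D_{mR}(0)\times\, ]-m^2R^2,m^2R^2[\,\subset B_R(0)$, which yields $\vol(B_R(0))\geq 2\pi m^4R^4$.

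To extend the estimates to all $R$ in a half-line $[R_0,\infty)$ with $R_0>0$, I observe that $R\mapsto R^{-4}\vol(B_R(0))$ is continuous and strictly positive on the compact interval $[R_0,\frac{\pi}{2\tau}]$, hence bounded between positive constants there; splicing with the two previous bounds gives the claim. The main difficulty bypassed by this approach is the absence of a closed-form expression for $\vol(B_R(0))$: the exponential map at the origin develops conjugate values along the geodesics \eqref{eqn:geodesics} as the parameter $\phi$ varies, which is precisely why Lemma \ref{lemma:geodesic-balls-nil} is only a containment and not an equality. The sandwich replaces the true geodesic ball with the elementary set $\{\delta_1<R\}$, whose Lebesgue volume is trivially computable, and thereby converts a delicate Jacobi-field computation into a one-line estimate.
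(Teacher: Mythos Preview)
Your proof is correct and follows essentially the same sandwich argument as the paper: reduce to $p=0$ by homogeneity, observe that the Riemannian volume is Lebesgue measure, and trap $B_R(0)$ between cylinders $D_{\rho}(0)\times\,]{-\rho^2},\rho^2[$ of volume $2\pi\rho^4$ via Lemmas~\ref{lemma:geodesic-balls-nil} and~\ref{lemma:equivalence-distances}. The only cosmetic differences are that you invoke Lemma~\ref{lemma:geodesic-balls-nil}(b) directly for the upper bound (rather than the constant $M$ of Lemma~\ref{lemma:equivalence-distances}, whose proof does exactly that anyway) and that you spell out the continuity argument on $[R_0,\tfrac{\pi}{2\tau}]$, which the paper leaves implicit.
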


\begin{proof}
Since $\Nil(\tau)$ is homogeneous, we can assume that $p=0$. For any $R>0$, let us define  $C_R=D_R(0)\times]-R^2,R^2[$. Lemma~\ref{lemma:equivalence-distances} with $\alpha=1$ yields the existence of $M,m>0$ such that $C_{mR}\subset B_R(0)\subset C_{MR}$ for all $R>\frac{\pi}{2\tau}$. Since the volume form in $\Nil(\tau)$ coincides with the Euclidean volume form in $\R^3$ (i.e., the identity map $\Nil(\tau)\to\R^3$ is volume-preserving) we conclude that $\vol(C_R)=2\pi R^4$ for all $R>0$, so $2\pi m^4R^4\leq\vol(B_R(0))\leq 2\pi M^4R^4$, and the statement follows.
\end{proof}

\subsection{Geodesic balls in $\PSL$}\label{sec:geodesic-balls-psl}
Let us first observe that there exist four kinds of geodesics in $\mathbb{E}(\kappa,\tau)$, $\kappa<0$, $\tau\neq 0$, depending on the nature of $\pi\circ\gamma$, i.e., the curve $\pi\circ\gamma$ can be a geodesic, a circle, a horocycle, or a hypercycle in $\H^2(\kappa)$. Up to a rotation about the $z$-axis, we will chose the geodesic $\gamma=(x,y,z)$ with unit speed and $\gamma(0)=(0,0,0)$ so that $x'(0)=0$ and $y'(0)\geq 0$. Hence $\gamma$ lies in one of the following families of examples:
\begin{enumerate}
 \item Horizontal geodesics (projecting onto a geodesic of $\mathbb{H}^2(\kappa)$)
\[
\gamma(t)=\left(0,\tfrac{2}{\sqrt{-\kappa}}\tanh(\tfrac{\sqrt{-\kappa}}{2} t),0\right),\]
from where $\gamma'(0)=(0,1,0)$.
 \item Elliptic geodesics (projecting onto a circle of $\mathbb{H}^2(\kappa)$)
 \begin{align*}
x(t)&=\frac{4a(\kappa a^2-4)(1-\cos(mt))}{16+\kappa^2a^4+8\kappa a^2\cos(mt)},\\
y(t)&=\frac{4a(\kappa a^2+4)\sin(mt)}{16+\kappa^2a^4+8\kappa a^2\cos(mt)},\\
z(t)&=\frac{4+a^2(8\tau^2-\kappa)}{\sqrt{(4-\kappa a^2)^2+64a^2\tau^2}}\,t+\frac{4\tau}{\kappa}\arctan\left(\frac{-\kappa a^2\sin(mt)}{4+\kappa a^2\cos(mt)}\right),
\end{align*}
where $0\leq a<\frac{2}{\sqrt{-\kappa}}$ is arbitrary, and we take the auxiliary parameter
\[m=\frac{2(4+\kappa a^2)\tau}{\sqrt{(4-\kappa a^2)^2+64a^2\tau^2}}.\]
Note that the initial condition $\gamma'(0)$ is given by
\[\gamma'(0)=\left(0,\frac{8a\tau}{\sqrt{(4-\kappa a^2)^2+64a^2\tau^2}},\frac{4-\kappa a^2}{\sqrt{(4-\kappa a^2)^2+64a^2\tau^2}}\right).\]
We remark that the vertical geodesic $\gamma(t)=(0,0,t)$ is obtained for $a=0$.
\item Parabolic geodesics (projecting onto a horocycle of $\mathbb{H}^2(\kappa)$)
 \begin{align*}
x(t)&=\frac{-2\sqrt{-\kappa}\tau^2 t^2}{4\tau^2-\kappa(1+\tau^2t^2)},\\
y(t)&=\frac{2\tau\sqrt{4\tau^2-\kappa}\, t}{4\tau^2-\kappa(1+\tau^2t^2)},\\
z(t)&=\frac{\sqrt{4\tau^2-\kappa}}{\sqrt{-\kappa}}\,t+\frac{4\tau}{\kappa}\arctan\left(\frac{\tau\sqrt{-\kappa}}{\sqrt{4\tau^2-\kappa}}\,t\right).
\end{align*}
If follows that the tangent vector at the origin is
\[\gamma'(0)=\left(0,\frac{2\tau}{\sqrt{4\tau^2-\kappa}},\frac{\sqrt{-\kappa}}{\sqrt{4\tau^2-\kappa}}\right)\]
\item Hyperbolic geodesics (projecting onto a hypercycle of $\mathbb{H}^2(\kappa)$)
 \begin{align*}
x(t)&=\frac{4a\sinh^2(mt)}{4+\kappa a^2\cosh^2(mt)},\\
y(t)&=\frac{a\sqrt{-\kappa a^2-4}\sinh(2mt)}{4+\kappa a^2\cosh^2(mt)},\\
z(t)&=\frac{4\tau^2-\kappa}{\kappa\sqrt{1+a^2\tau^2}}\,t+\frac{4\tau}{\kappa}\arctan\left(\frac{2\tanh(mt)}{\sqrt{-\kappa a^2-4}}\right),
\end{align*}
for any choice of $a>\frac{2}{\sqrt{-\kappa}}$, where $m=\frac{\tau\sqrt{-\kappa a^2-4}}{2\sqrt{a^2\tau^2+1}}$. Note that
\[\gamma'(0)=\left(0,\frac{a\tau}{\sqrt{1+a^2\tau^2}}, \frac{1}{\sqrt{1+a^2\tau^2}}\right).\]
\end{enumerate}

\begin{remark} This classification does not extend to the case $\kappa<0$ and $\tau=0$, because the geodesics of $\E(\kappa,\tau)=\H^2(\kappa)\times\R$ are just the product of geodesics of each factor.
\end{remark}

We shall now estimate the maximum height of the geodesic ball $B_R(0)$ in $\E(\kappa,\tau)$ when $\kappa<0$. 

\begin{lemma}\label{lemma:geodesic-balls-psl}
Given $R>0$, let $B_R(0)$ be the geodesic ball of radius $R$ in $\E(\kappa,\tau)$, $\kappa<0$, centered at the origin, and let $D_R(0)=\{(x,y)\in\R^2:x^2+y^2<\frac{4}{-\kappa}\tanh^2(\frac{\sqrt{-\kappa}}{2}R)\}$ be the corresponding disk in $\H^2(\kappa)$. Then there exists $M>0$, independent of $R$, such that $B_R(0)\subset D_R(0)\times\,]-MR,MR[$. 
\end{lemma}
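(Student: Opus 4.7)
The plan is to split the desired inclusion into a horizontal estimate $\pi(p)\in D_R(0)$ and a vertical estimate $|z(p)|<MR$, and to handle both uniformly via an ODE-type bound rather than by checking each of the four geodesic families separately.

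For the horizontal estimate, I would use that $\pi:\E(\kappa,\tau)\to\H^2(\kappa)$ is a Riemannian submersion and hence does not increase distances. Thus, given a unit-speed minimizing geodesic $\gamma:[0,r]\to\E(\kappa,\tau)$ from $0$ to $p\in B_R(0)$ with $r<R$, the projection $\pi\circ\gamma$ has speed at most one; the same is true on every subinterval, so $\pi(\gamma(t))\in D_t(0)$ for all $t\in[0,r]$ and in particular $\pi(p)\in D_R(0)$.

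The vertical estimate is reduced to a pointwise bound on $z'(t)$. Writing $\gamma(t)=(x(t),y(t),z(t))$ and $\gamma'=a_1 E_1+a_2 E_2+a_3 E_3$, and inverting the linear relations between $\{\partial_x,\partial_y,\partial_z\}$ and $\{E_1,E_2,E_3\}$ (or equivalently solving for $z'$ in the expression for $a_3$ used to derive \eqref{eqn:ode-geodesics}), one obtains
\[
z'(t)=a_3(t)+\tau\bigl(x(t)a_2(t)-y(t)a_1(t)\bigr).
\]
Unit speed gives $a_1^2+a_2^2+a_3^2=1$, so $|a_3|\leq 1$ and, by Cauchy--Schwarz, $|xa_2-ya_1|\leq\sqrt{x^2+y^2}$. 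Combined with the horizontal estimate, $(x(t),y(t))\in D_t(0)$ means the Euclidean radius satisfies $\sqrt{x(t)^2+y(t)^2}\leq\tfrac{2}{\sqrt{-\kappa}}\tanh(\tfrac{\sqrt{-\kappa}}{2}t)<\tfrac{2}{\sqrt{-\kappa}}$. Therefore $|z'(t)|\leq 1+\tfrac{2|\tau|}{\sqrt{-\kappa}}$ for every $t$, and integrating from $0$ to $r$ gives $|z(p)|\leq Mr<MR$ with the explicit constant $M=1+\tfrac{2|\tau|}{\sqrt{-\kappa}}$; in the product case $\tau=0$ this reduces to $|z|<R$.

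The step I expect to carry the substance of the argument is this last one: the fact that $\sqrt{x^2+y^2}$ is uniformly bounded along any geodesic because the Poincar\'{e}-type model of $\H^2(\kappa)$ is Euclidean-bounded. This is what upgrades the na\"{i}ve inequality $|z'|\leq 1+|\tau|\sqrt{x^2+y^2}$ from a quantity potentially growing with $t$ to a genuine constant, and it is the geometric reason why the height of $B_R(0)$ grows only linearly in $R$, in sharp contrast with the quadratic-in-$R$ behaviour obtained for $\Nil(\tau)$ in Lemma~\ref{lemma:geodesic-balls-nil}, where $x^2+y^2$ is free to grow linearly in $t$ along nearly horizontal geodesics. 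One could alternatively read off the same bound family by family from the explicit parametrisations in (1)--(4) by controlling the coefficient of $t$ in the parameter $a$ and using $|\arctan|\leq\pi/2$, but this is longer and obscures the origin of $M$.
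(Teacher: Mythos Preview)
Your proof is correct and takes a genuinely different, cleaner route than the paper. The paper proceeds family by family through the explicit parametrisations of elliptic, parabolic and hyperbolic geodesics: in each case the height function $z(t)$ is a linear term plus an $\arctan$ term, and the authors bound the $\arctan$ by $\pm\tfrac{\pi}{2}$ and then check that the linear coefficient is uniformly bounded in the shape parameter $a$, obtaining case-dependent constants such as $\sqrt{4\tau^2-\kappa}/\sqrt{-\kappa}$. Your argument bypasses the classification entirely: from $z'=a_3+\tau(xa_2-ya_1)$ and Cauchy--Schwarz you get $|z'|\leq 1+|\tau|\sqrt{x^2+y^2}$, and then the Euclidean boundedness of the Poincar\'e model (the inequality $\sqrt{x^2+y^2}<2/\sqrt{-\kappa}$, which holds simply because the model lives in this disk) makes this a constant, giving the explicit $M=1+2|\tau|/\sqrt{-\kappa}$. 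This is shorter, yields a single explicit constant, and isolates the geometric mechanism --- the contrast with $\Nil(\tau)$ that you point out is exactly right: there the base $\R^2$ is unbounded in the Euclidean sense, so $\sqrt{x^2+y^2}$ can grow like $t$ and the same inequality only gives $|z'|\lesssim t$, hence the quadratic height in Lemma~\ref{lemma:geodesic-balls-nil}. The paper's approach, on the other hand, produces slightly sharper family-by-family constants (e.g.\ $\sqrt{1+4\tau^2/(-\kappa)}$ in the parabolic case, which is smaller than your $1+2|\tau|/\sqrt{-\kappa}$), but this refinement is irrelevant for the applications.
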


\begin{proof}
Reasoning in the same way we have done in $\Nil(\tau)$, this is equivalent to estimate $z(R)$ for a unit-speed geodesic $\gamma=(x,y,z)$ with $\gamma(0)=(0,0,0)$. If $\tau=0$, then the statement is trivial (clearly we can take $M=1$), so we will suppose that $\tau\neq 0$. Taking into account the explicit expressions for $z(t)$ above, it is clear that the geodesic maximizing $z(R)$ cannot be horizontal and, in the other cases, we can estimate the $\arctan$-term by $\pm\frac{\pi}{2}$. Since the remaining term is linear in $t$, it suffices to get a bound on its coefficient not depending on the parameter $a$. It is not difficult to check that
\begin{align*}
 z(R)&\leq\left(1-\frac{1}{\kappa}(8\tau^2-\kappa)\right)R-\frac{2\pi\tau}{\kappa},&&\text{(Elliptic case)}\\
 z(R)&\leq\frac{\sqrt{4\tau^2-\kappa}}{\sqrt{-\kappa}}R-\frac{2\pi\tau}{\kappa},&&\text{(Parabolic and hyperbolic cases)}
\end{align*}
from where the statement follows.
\end{proof}

\section{Minimal graph equation in $\E(\kappa, \tau)$ and examples}
\label{minimal-graph-section}

Given a domain $\Omega\subset\M^2(\kappa)$ and a function $u\in\mathcal{C}^2(\Omega)$  we define the graph of $u$ (with respect to the zero section $(x,y)\mapsto(x,y,0)$) as the surface 
\[\Sigma=\{(x,y,u(x,y)):(x,y)\in\Omega\}.\]
It is well-known that the mean curvature $H(u)$ of the  graph $\Sigma\subset\E(\kappa,\tau)$ is given as a function on  $\M^2(\kappa)$ by the following divergence-type expression:
\begin{equation}\label{eqn:H}
H(u)=\frac{1}{2}\div \left(\frac{Gu}{\sqrt{1+\|Gu\|^2}}\right),
\end{equation}
where the divergence and the norm are computed in $\M^2(\kappa)$, and $Gu$ is a vector field on $\Omega$ given in coordinates by $Gu=(\frac{u_x}{\lambda}+\tau y)\frac{\partial_x}{\lambda}+(\frac{u_y}{\lambda}-\tau x)\frac{\partial_y}{\lambda}$. The most important feature of $Gu$ is the fact that it can be expressed as $Gu=\nabla u+Z$, where $\nabla u$ is the gradient of $u$ in $\M^2(\kappa)$, and $Z=\tau y\frac{\partial_x}{\lambda}-\tau x\frac{\partial_y}{\lambda}$ is a vector field in $\M^2(\kappa)$ independent of $u$. We will also denote $W_u=\sqrt{1+\|Gu\|^2},$ as usual.

\begin{remark}\label{rmk:generalization}
Equation~\eqref{eqn:H} is one of the keystones of our arguments below, and it is still valid in the more general scenario of Killing submersions, i.e., in an orientable 3-manifold $\E$ that admits a Riemannian submersion $\pi:\E\to M$, being $M$ a surface, such that the fibers of $\pi$ are the integral curves of a unit Killing vector field $\xi$. After choosing an initial smooth section $F_0:M\to\E$ transversal to the fibers, we can understand graphs as surfaces parametrized by $F_u:\Omega\subset M\to\E$, given by $F_u(p)=\phi_{u(p)}(F_0(p))$, being $u\in C^\infty(\Omega)$ and $\{\phi_t\}_{t\in\R}$ the 1-parameter group of isometries associated to $\xi$. It turns out that the mean curvature $H(u)$ of $F_u$, as a function on $M$, satisfies~\eqref{eqn:H}, where $Gu=\nabla u+Z$ for some vector field $Z$ on $\Omega$~\cite{LeeMan}. In that sense, some of our results extend without changes to the Killing-submersion setting (see Lemmas~\ref{lemma:area-estimate} and~\ref{lemma:area-estimate2}).
\end{remark}

Next we will briefly describe some examples of minimal surfaces in $\Nil(\tau)$.
\begin{enumerate}
 \item\textbf{Planes.} In our model of $\Nil(\tau)\equiv\R^3$, all affine planes are minimal. On the one hand, vertical planes (i.e., those projecting to a geodesic in $\R^2$) are flat and admit two foliations: one by vertical geodesics and other by horizontal geodesics. On the other hand, if $\Sigma$ is a non-vertical plane, then $\Sigma$ is a horizontal umbrella (i.e., the union of all horizontal geodesics passing through a point $p\in\Sigma$), and has negative Gauss curvature.

 \item\label{catenoid-examples}\textbf{Vertical catenoids.} Let us briefly explain how the equation of catenoids is deduced (see also~\cite{FMP}). Given the parametrization of a rotationally invariant surface
\[\Phi(t,s)=(r(t)\cos(s),r(t)\sin(s),h(t)), \qquad (t,s)\in\Omega\subset\R^2,\]
we can reparametrize it in such a way that there exists an auxiliary function $\alpha(t)$ satisfying  $h'(t)=\cos(\alpha(t))$ and $r'(t)\sqrt{1+\tau^2r(t)^2}=\sin(\alpha(t))$. Using this, $\Phi$ has constant mean curvature $H$ if and only if the following system of \textsc{ODE} is satisfied:
\begin{equation}\label{eqn:H-system}
 \left\{\begin{array}{l}
  h'(t)=\cos(\alpha(t)),\\
  r'(t)=\frac{\sin(\alpha(t))}{\sqrt{1+\tau^2r(t)^2}},\\
  \alpha'(t)=\frac{\cos(\alpha(t))+2Hr(t)}{r(t)\sqrt{1+\tau^2r(t)^2}}.
 \end{array}\right.
\end{equation}
The quantity $E=r\cos(\alpha)+Hr^2$ is constant along any solution of~\eqref{eqn:H-system}. If $H=E=0$, we get that $z$ must be constant. If $H=0$ and $E\neq 0$, we reparameterize~\eqref{eqn:H-system} by taking $r=\sqrt{x^2+y^2}$ as a variable to obtain a $1$-parameter family of \emph{catenoids} depending on the parameter $E>0$, given by:
\[h(r)=\pm\int_E^r\frac{E\sqrt{1+\tau^2s^2}}{\sqrt{s^2-E^2}}\df s,\qquad r\geq E.\]
This means that half of the catenoid is a graph over the exterior domain $r\geq E$ with zero boundary values. We also observe that $r$ is the arc-length parameter in $\R^2$, so the height growth of the catenoids is linear. Moreover catenoids have negative Gauss curvature.

\item\label{graph-examples}\textbf{Graphs.} Fern\'{a}ndez and Mira~\cite{FerMir} showed that there exists a vast family of entire minimal graphs in $\Nil(\tau)$, namely, they can associate to each holomorphic quadratic differential $Q$ on $\C$ or $\mathbb{D}=\{z\in\C:|z|<1\}$ a $2$-parameter family of entire minimal graphs with Abresch-Rosenberg differential $Q$. The only restriction is $Q\neq 0$ if the domain is $\C$.

\begin{itemize}
 \item Figueroa, Mercuri and Pedrosa in~\cite{FMP} classified minimal graphs in $\Nil(\tau)$ invariant by a 1-parameter group of left-invariant isometries. Such surfaces  are given by the graph of the function
\begin{equation}\label{eqn:FMP-vertical}
f_\theta(x,y)=\tau xy+\frac{\sinh(\theta)}{4\tau}\left[2\tau y\sqrt{1+4\tau^2y^2}+\arcsinh(2\tau y)\right],
\end{equation}
for any $\theta\in\R$.

\item Cartier~\cite[Corollary 3.8]{C} proved that  
there are non-zero minimal graphs on any wedge of $\R^2$ of angle $ ]0, \pi[\,$, with zero boundary values. The techniques in the construction involve the deformation of a horizontal umbrella. The second author, Sa Earp and Toubiana~\cite{NST} proved that, for any wedge $S$ with vertex at the origin and angle $\theta\in]\frac{\pi}{2},\pi[$, there exists a non-zero minimal graph  over $S$, with zero boundary value. Here the proof is based on classical \textsc{PDE}'s theory joint with a suitable construction of barriers.

 \item Daniel~\cite[Examples 8.4 and 8.5]{Dan2} constructed entire minimal graphs of the form $z=xf(y)$ for some real function $f$ growing linearly at $\pm\infty$. As the Figueroa-Mercuri-Pedrosa examples, they are parabolic.
\end{itemize}
\end{enumerate}

\section{Extrinsic (Spherical) area growth}
\label{sec:spherical-area}
Let us consider a minimal graph $\Sigma\subset\E(\kappa,\tau)$, $\kappa\leq 0$, over an unbounded domain $\Omega\subset\M^2(\kappa)$ given by a function $u:\Omega\rightarrow\R$. We will assume that $\partial\Omega$ is piecewise regular and in each of its regular arcs, the function $u$ either takes continuous boundary values or has $\pm\infty$ limit value. It is well-known that if $u$ takes $\pm\infty$ limit value along a curve $\gamma$, then $\gamma$ must be a geodesic arc.

Given a point $p_0\in\E(\kappa,\tau)$, we are interested in estimating the area of the intersection of $\Sigma$ with $B_R(p_0)$, the geodesic ball of $\E(\kappa,\tau)$ of radius $R,$ centered at $p_0.$ Given  a continuous positive increasing function $f:\R\to\R^+$, the surface $\Sigma$ has \emph{extrinsic area growth} of order at least $f(R)$ (resp. at most $f(R)$) if 
\begin{equation*}
\liminf_{R\to\infty}\frac{\area(\Sigma\cap B_R(p_0))}{f(R)}>0 \qquad\left(\text{resp. }\limsup_{R\to\infty}\frac{\area(\Sigma\cap B_R(p_0))}{f(R)}<\infty\right).
\end{equation*}
This definition does not depend on $p_0$, so we will assume that $p_0=0$. When $f(R)$ is a polynomial of degree $k,$ we say that the extrinsic area growth is of order at least (or at most) $k$.

The following results will give estimates of the extrinsic area growth in terms of quantities computed in the base $\Omega\subset\M^2(\kappa)$. Given $R>0$, for simplicity we will denote $\Omega(R)=\Omega\cap D_R(0)$ and $\Omega(R_1,R_2)=\Omega(R_2)\smallsetminus\overline{\Omega(R_1)}$. The proof of Lemma~\ref{lemma:area-estimate} is inspired by the work of Elbert and Rosenberg (see~\cite[Lemma 4.1]{ER}).

\begin{lemma}\label{lemma:area-estimate}
Let $\Sigma\subset\E(\kappa,\tau)$ be a minimal graph given by a function $u\in C^\infty(\Omega)$, where $\Omega\subset\M^2(\kappa)$. Given $R>0$, let us suppose that we can decompose $\partial\Omega(R)=\Lambda(R)\cup\Gamma(R)\cup\Theta(R)$, where $u$ takes limit values $\pm\infty$ along $\Lambda(R)\subset\partial\Omega$, $u$ has continuous boundary values along $\Gamma(R)\subset\partial\Omega$, and $\Theta(R)=\Omega\cap\partial\Omega(R)$. We will also assume that $B_R(0)\subset D_R(0)\times[-h(R),h(R)]$, for some positive function $h$. Then the following area estimate holds:
\begin{align*}
\area(\Sigma\cap B_R(0))\leq\area(\Omega(R))+\int_{\Omega(R)}|Z|+h(R)\Length(\Theta(R)\cup\Lambda(R))+\int_{\Gamma(R)}|u|.
\end{align*}
\end{lemma}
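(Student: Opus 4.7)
My plan proceeds in three main steps plus a technical cleanup. First, since $\Sigma\cap B_R(0)$ is a portion of a graph over the open set $\Omega^{\ast}:=\pi(\Sigma\cap B_R(0))$, the graph area formula gives $\area(\Sigma\cap B_R(0))=\int_{\Omega^{\ast}}W_u$. The hypothesis $B_R(0)\subset D_R(0)\times[-h(R),h(R)]$ forces $\Omega^{\ast}\subset\Omega^{\ast\ast}:=\Omega(R)\cap\{|u|\leq h(R)\}$, and hence
\[
\area(\Sigma\cap B_R(0))\leq\int_{\Omega^{\ast\ast}}W_u.
\]
This reduction is what makes the blow-up set $\Lambda(R)$ manageable, since $\Omega^{\ast\ast}$ stays bounded away from it.

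Next I would exploit the algebraic identity $W_u=\tfrac{1}{W_u}+\tfrac{\|Gu\|^2}{W_u}\leq 1+\langle Gu,Gu/W_u\rangle$, and split $Gu=\nabla u+Z$. The pointwise bound $\|Gu/W_u\|\leq 1$ gives $\langle Z,Gu/W_u\rangle\leq|Z|$, producing
\[
\int_{\Omega^{\ast\ast}}W_u\leq\area(\Omega(R))+\int_{\Omega(R)}|Z|+\int_{\Omega^{\ast\ast}}\left\langle\nabla u,\tfrac{Gu}{W_u}\right\rangle,
\]
which already accounts for the first two terms of the claimed inequality.

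The decisive step is to control the remaining integral using the minimality equation $\div(Gu/W_u)=0$ from~\eqref{eqn:H}. I would test it against the Lipschitz truncation $\phi\colon\R\to\R$ defined by $\phi(t)=t$ for $|t|\leq h(R)$ and $\phi(t)=\mathrm{sgn}(t)\,h(R)$ otherwise. Since $\phi'(u)=\mathbf{1}_{\{|u|<h(R)\}}$ off a null set and $\div(\phi(u)\,Gu/W_u)=\phi'(u)\langle\nabla u,Gu/W_u\rangle$, an application of the divergence theorem to the uniformly bounded vector field $\phi(u)\,Gu/W_u$ on $\Omega(R)$ yields
\[
\int_{\Omega^{\ast\ast}}\left\langle\nabla u,\tfrac{Gu}{W_u}\right\rangle=\int_{\partial\Omega(R)}\phi(u)\left\langle\tfrac{Gu}{W_u},\nu\right\rangle.
\]
Combining $|\langle Gu/W_u,\nu\rangle|\leq 1$ with the pointwise bounds $|\phi(u)|\leq|u|$ on $\Gamma(R)$ and $|\phi(u)|\leq h(R)$ on $\Theta(R)\cup\Lambda(R)$ bounds the boundary integral by $\int_{\Gamma(R)}|u|+h(R)\Length(\Theta(R)\cup\Lambda(R))$, which is exactly the missing contribution.

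The main obstacle is the rigorous justification of the divergence theorem on $\Omega(R)$, since $u$ has infinite limits along $\Lambda(R)$ and $Gu/W_u$ need not extend continuously there. I would handle this by exhausting $\Omega(R)$ through the open subdomains $\Omega(R)\setminus N_\varepsilon(\Lambda(R))$, applying the classical divergence theorem, and letting $\varepsilon\to 0$. Since $\|\phi(u)\,Gu/W_u\|\leq h(R)$ pointwise, the contribution of the inner tubular boundary is at most $h(R)\Length(\Lambda(R))+o(1)$ and merges with the $\Theta(R)$ piece to give $h(R)\Length(\Theta(R)\cup\Lambda(R))$ in the limit, while dominated convergence hands the interior integral back. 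Assembling the three estimates yields the asserted inequality.
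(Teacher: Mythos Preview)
Your proof is correct and follows essentially the same route as the paper: the same reduction to $\Omega(R)\cap\{|u|\le h(R)\}$, the same split $W_u\le 1+\langle Z,\tfrac{Gu}{W_u}\rangle+\langle\nabla u,\tfrac{Gu}{W_u}\rangle$, the same truncation of $u$, and the same divergence-theorem step tested against that truncation. The one technical difference is that the paper pairs the truncation $u_R$ with an additional radial cutoff $\phi_R$ supported in $D_{(1+\delta)R}(0)$ and recovers the $\Theta(R)$-contribution via the coarea formula as $\delta\to 0$, whereas you apply the divergence theorem directly on $\Omega(R)$ and read off $\Theta(R)$ as a genuine boundary term (and handle $\Lambda(R)$ by the exhaustion you describe, which the paper leaves implicit).
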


\begin{proof} 
Since $W=\sqrt{1+|Gu|^2}\in C^\infty(\Omega)$ is the area element of $\Sigma$ in the base domain $\Omega$ through the projection $\pi:\E(\kappa,\tau)\to\M^2(\kappa)$, we get that
\begin{equation}\label{eqn:bound1}
\begin{aligned}
\area(\Sigma\cap B_R(0))&=\int_{\pi(\Sigma\cap B_R(0))}W\leq\int_{\Omega(R)\cap\{|u|\leq h(R)\}}W\\
&=\int_{\Omega(R)\cap\{|u|\leq h(R)\}}\frac{|G u|^2}{W}+\int_{\Omega(R)\cap\{|u|\leq h(R)\}}\frac{1}{W}.
\end{aligned}
\end{equation}
Since $W\geq 1$ and $\Omega(R)\cap\{|u|\leq h(R)\}\subset\Omega(R)$, the second term in the RHS of~\eqref{eqn:bound1} satisfies
\begin{equation}\label{eqn:bound15}
\int_{\Omega(R)\cap\{|u|\leq h(R)\}}\frac{1}{W}\leq\int_{\Omega(R)}1=\area(\Omega(R)).
\end{equation}
In order to estimate the first term in the RHS of~\eqref{eqn:bound1}, we  fix $\delta>0$ and define the following auxiliary functions over $\overline\Omega$:
\begin{align*}
u_R(x)&=\begin{cases}
h(R)&\text{if }u(x)>h(R),\\
u(x)&\text{if }|u(x)|\leq h(R),\\
-h(R)&\text{if }u(x)<-h(R),
\end{cases}&
\phi_R(x)&=\begin{cases}
1&\text{if }r(x)<R,\\
\frac{(1+\delta)R-r(x)}{\delta R}&\text{if }R\leq r(x)\leq(1+\delta)R,\\
0&\text{if }(1+\delta)R<r(x),
\end{cases}
\end{align*}
where  $r(x)$ denotes the distance to the origin in $\M^2(\kappa)$.
Observe that the cut-off function $\phi_R$ is such that $0\leq\phi_R\leq 1$. By decomposing $Gu=\nabla u+Z$ and using that $\nabla u_R=\nabla u$ if $|u|<h(R)$, and $\nabla u_R=0$ if $|u|>h(R)$, we get
\begin{align}
\int_{\Omega(R)\cap\{|u|\leq h(R)\}}\frac{|Gu|^2}{W}&\leq \int_{\Omega((1+\delta)R)\cap\{|u|\leq h(R)\}}\phi_R\frac{|Gu|^2}{W}\notag\\
&=\int_{\Omega((1+\delta)R)\cap\{|u|\leq h(R)\}}\phi_R\frac{\langle Z,Gu\rangle}{W}+\int_{\Omega((1+\delta)R)}\phi_R\frac{\langle\nabla u_R,Gu\rangle}{W}\notag\\
&\leq\int_{\Omega((1+\delta)R)}\left(|Z|+\phi_R\frac{\langle\nabla u_R,Gu\rangle}{W}\right).\label{eqn:bound2}
\end{align}
In the last step we have used the Cauchy-Schwarz inequality, as well as the fact that $\phi_R\frac{|Gu|}{W}\leq 1$. To get rid of the last summand in~\eqref{eqn:bound2}, we integrate the following identity in $\Omega((1+\delta)R)$:
\begin{equation}\label{eqn:bound11}
0=\phi_R u_R\,\div\left(\frac{Gu}{W}\right)=\div\left(\phi_Ru_R\frac{Gu}{W}\right)-\frac{\phi_R\langle\nabla u_R,Gu\rangle}{W}-\frac{u_R\langle\nabla\phi_R,Gu\rangle}{W}.
\end{equation}
Stokes theorem yields
\begin{equation}\label{eqn:bound10}
\int_{\Omega((1+\delta)R)}\frac{\phi_R\langle\nabla u_R,Gu\rangle}{W}=\int_{\partial\Omega((1+\delta)R)}\phi_Ru_R\frac{\langle Gu,\eta\rangle}{W}-\int_{\Omega((1+\delta)R)}\frac{u_R\langle\nabla\phi_R,Gu\rangle}{W},
\end{equation}
where $\eta$ denotes an outer unit conormal vector field to $\Omega((1+\delta)R)$ along its boundary. We will now estimate the two terms in the RHS of~\eqref{eqn:bound10}. For the first one, we notice that the integral over $\partial\Omega((1+\delta)R)$ can be decomposed in integrals over $\Lambda((1+\delta)R)$ and $\Gamma((1+\delta)R)$, because $\phi_R$ vanishes at the rest of points of $\partial\Omega((1+\delta)R)$. Hence, using Cauchy-Schwarz inequality and the fact that $|\phi_R|\leq1$, we obtain
\begin{equation}\label{eqn:bound20}
\begin{aligned}
\int_{\partial\Omega((1+\delta)R)}\phi_Ru_R\frac{\langle Gu,\eta\rangle}{W}&=\int_{\Lambda((1+\delta)R)}\phi_Ru_R\frac{\langle Gu,\eta\rangle}{W}+\int_{\Gamma((1+\delta)R)}\phi_Ru_R\frac{\langle Gu,\eta\rangle}{W}\\
\leq&\int_{\Lambda((1+\delta)R)}|u_R|+\int_{\Gamma((1+\delta)R)}|u_R|\\
\leq& h((1+\delta)R)\Length(\Lambda((1+\delta)R))+\int_{\Gamma((1+\delta)R)}|u|,
\end{aligned}
\end{equation}
where we have used that $|u_R|=h((1+\delta)R)$ along $\Lambda((1+\delta)R)$ since $u$ takes unbounded values there, and $|u_R|\leq|u|$ along $\Gamma(R)$. In order to get a bound on the second term in the RHS of~\eqref{eqn:bound10} we will use again Cauchy-Schwarz inequality and the fact that $|\nabla\phi_R|=\frac{1}{\delta R}$ on $\Omega(R,(1+\delta)R)$ and $|\nabla\phi_R|=0$ in $\Omega(R)$. We get that
\begin{equation}\label{eqn:bound7}
-\int_{\Omega((1+\delta)R)}u_R\frac{\langle\nabla\phi_R,Gu\rangle}{W}\leq\int_{\Omega(R,(1+\delta)R)}\frac{|\nabla\phi_R||u_R||Gu|}{W}\leq\frac{h(R)}{\delta R}\area(\Omega(R,(1+\delta)R)).
\end{equation}
Plugging~\eqref{eqn:bound20} and~\eqref{eqn:bound7} into~\eqref{eqn:bound10}, and combining the result with~\eqref{eqn:bound2} and~\eqref{eqn:bound15}, it suffices to take limits for $\delta\to 0$ to get the the inequality in the statement. The only non-trivial limit is $\lim_{\delta\to 0}\frac{1}{\delta R}\area(\Omega(R,(1+\delta)R))$, but it equals $\Length(\Theta(R))$ by the coarea formula.
\end{proof}

Observe that the term $\int_{\Gamma(R)}|u|$ may be useful, for instance, in the case we know that $u$ has zero (or bounded) boundary values along some components of the boundary, or when $\Gamma(R)=\emptyset$. Nonetheless we can slightly simplify the inequality in Lemma~\ref{lemma:area-estimate} by estimating $|u_R|\leq h(R)$ along $\Gamma(R)$ in~\eqref{eqn:bound20}, to obtain the following.

\begin{lemma}~\label{lemma:area-estimate2}
Under the assumptions of Lemma~\ref{lemma:area-estimate},
\begin{align*}
\area(\Sigma\cap B_R(0))\leq\int_{\Omega(R)}(1+|Z|)+h(R)\Length(\partial\Omega(R)).
\end{align*}
\end{lemma}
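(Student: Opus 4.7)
The plan is to mimic the proof of Lemma~\ref{lemma:area-estimate} almost verbatim, changing only the way the boundary integral along $\Gamma((1+\delta)R)$ is estimated in~\eqref{eqn:bound20}. First I would reproduce the initial setup: split $W=\tfrac{|Gu|^2}{W}+\tfrac{1}{W}$ as in~\eqref{eqn:bound1}, controlling the $\tfrac{1}{W}$-piece by $\area(\Omega(R))$ exactly as in~\eqref{eqn:bound15}, and controlling the $\tfrac{|Gu|^2}{W}$-piece by introducing the truncation $u_R$ and the radial cutoff $\phi_R$, decomposing $Gu=\nabla u+Z$, and arriving at the bound~\eqref{eqn:bound2}, which produces an $\int_{\Omega((1+\delta)R)}|Z|$-term together with $\int_{\Omega((1+\delta)R)}\phi_R\tfrac{\langle\nabla u_R,Gu\rangle}{W}$. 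Then I would integrate the identity~\eqref{eqn:bound11} and apply Stokes to recover~\eqref{eqn:bound10}.

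The key modification is in the boundary piece~\eqref{eqn:bound20}: instead of using $|u_R|\leq|u|$ along $\Gamma((1+\delta)R)$, I would directly invoke the built-in truncation bound $|u_R|\leq h((1+\delta)R)$, which also holds on $\Lambda((1+\delta)R)$, to obtain
\begin{align*}
\int_{\partial\Omega((1+\delta)R)}\phi_R u_R\frac{\langle Gu,\eta\rangle}{W}\leq h((1+\delta)R)\bigl(\Length(\Lambda((1+\delta)R))+\Length(\Gamma((1+\delta)R))\bigr).
\end{align*}
The cutoff-derivative term~\eqref{eqn:bound7} is treated exactly as before, bounded by $\tfrac{h(R)}{\delta R}\area(\Omega(R,(1+\delta)R))$, whose limit as $\delta\to 0$ equals $h(R)\Length(\Theta(R))$ by the coarea formula.

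Collecting all the estimates and letting $\delta\to 0$ yields
\begin{align*}
\area(\Sigma\cap B_R(0))\leq\area(\Omega(R))+\int_{\Omega(R)}|Z|+h(R)\bigl(\Length(\Lambda(R))+\Length(\Gamma(R))+\Length(\Theta(R))\bigr).
\end{align*}
Since the decomposition $\partial\Omega(R)=\Lambda(R)\cup\Gamma(R)\cup\Theta(R)$ is disjoint up to a finite set, the three lengths combine into $\Length(\partial\Omega(R))$, while $\area(\Omega(R))+\int_{\Omega(R)}|Z|=\int_{\Omega(R)}(1+|Z|)$, giving the claimed inequality. There is no genuine obstacle: the statement is a purely cosmetic repackaging of the previous lemma, obtained by trading a potentially sharper $\int_{\Gamma(R)}|u|$-control for the uniform pointwise bound $|u_R|\leq h(R)$, valid on all of $\overline{\Omega}$.
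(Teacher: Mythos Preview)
Your proposal is correct and matches the paper's own argument exactly: the paper states Lemma~\ref{lemma:area-estimate2} as an immediate consequence of the proof of Lemma~\ref{lemma:area-estimate}, obtained precisely by replacing the estimate $|u_R|\leq|u|$ along $\Gamma((1+\delta)R)$ in~\eqref{eqn:bound20} with the uniform truncation bound $|u_R|\leq h(R)$. No further ideas are needed.
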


These estimates could be adapted to many particular situations to obtain upper bounds on the extrinsic area growth, but now we will focus in the cases we will need in the sequel. Lemmas~\ref{lemma:area-estimate} and~\ref{lemma:area-estimate2} can be clearly generalized to the Killing-submersion setting (see Remark~\ref{rmk:generalization}).

\begin{theorem}\label{thm:area-growth-minimal}
Let $\Sigma\subset\E(\kappa,\tau)$ be a minimal graph over a domain $\Omega\subset\M^2(\kappa)$, whose boundary is piecewise regular and consists of curves along which the graph either extends continuously or takes infinite limit values. Suppose that (at least) one of the following conditions holds:
\begin{enumerate}
 \item[(i)] The graph extends continuously to $\partial\Omega$ with zero boundary values.
 \item[(ii)] There exists $K>0$ such that $\Length(\partial\Omega(R))\leq K\,\Length(\partial D_R(0))$ for $R$ sufficiently large.
\end{enumerate}
Then $\Sigma$ admits the following area estimate:
\begin{enumerate}[label=(\alph*)]
	\item If $\E(\kappa,\tau)=\R^3$, then $\Sigma$ has at most quadratic extrinsic area growth.
	\item If $\E(\kappa,\tau)=\Nil(\tau)$, then $\Sigma$ has at most cubic extrinsic area growth.
 	\item If $\kappa<0$, then $\Sigma$ has at most extrinsic area growth of order $R\mapsto R\,e^{R\sqrt{-\kappa}}$.
\end{enumerate}
\end{theorem}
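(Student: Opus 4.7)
The plan is to reduce the theorem to the two area inequalities provided by Lemmas~\ref{lemma:area-estimate} and~\ref{lemma:area-estimate2}, and then bound each term appearing on the right-hand side in the three geometric settings (a), (b), (c) using the ball estimates of Section~2.

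First I would dispose of the dichotomy between hypotheses (i) and (ii). Under (i), continuity of $u$ up to $\partial\Omega$ with zero boundary values forces $\Lambda(R)=\emptyset$ and $u|_{\Gamma(R)}=0$, so Lemma~\ref{lemma:area-estimate} collapses to
\[\area(\Sigma\cap B_R(0))\leq \area(\Omega(R))+\int_{\Omega(R)}|Z|+h(R)\,\Length(\Theta(R)),\]
and $\Theta(R)\subset\partial D_R(0)$ gives $\Length(\Theta(R))\leq \Length(\partial D_R(0))$. Under (ii), Lemma~\ref{lemma:area-estimate2} combined with $\Length(\partial\Omega(R))\leq K\,\Length(\partial D_R(0))$ yields
\[\area(\Sigma\cap B_R(0))\leq \int_{\Omega(R)}(1+|Z|)+K\,h(R)\,\Length(\partial D_R(0)).\]
Thus in either case it suffices to estimate, up to universal constants, the quantities $\area(\Omega(R))$, $\int_{\Omega(R)}|Z|$, $h(R)$ and $\Length(\partial D_R(0))$ in $\M^2(\kappa)$ for $\kappa\leq0$.

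The key inputs are then: the height bound $h(R)$ for extrinsic balls $B_R(0)\subset D_R(0)\times[-h(R),h(R)]$, which by Lemma~\ref{lemma:geodesic-balls-nil} satisfies $h(R)\leq C R^2$ in $\Nil(\tau)$ and trivially $h(R)=R$ in $\R^3$, while Lemma~\ref{lemma:geodesic-balls-psl} gives $h(R)\leq MR$ when $\kappa<0$; the standard formulas $\area(\Omega(R))\leq\area(D_R(0))$ and $\Length(\partial D_R(0))$, which are $\mathcal{O}(R^2)$ and $\mathcal{O}(R)$ respectively when $\kappa=0$, and $\mathcal{O}(e^{R\sqrt{-\kappa}})$ when $\kappa<0$; and finally the pointwise bound on $Z=\tau y\,\partial_x/\lambda-\tau x\,\partial_y/\lambda$, for which $|Z|^2=\tau^2(x^2+y^2)$ in the orthonormal frame of $\M^2(\kappa)$. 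In $\R^3$ one has $Z=0$; in $\Nil(\tau)$ one gets $\int_{D_R(0)}|Z|=\tau\int_0^R\!\!\int_0^{2\pi}\!r^2\,d\theta\,dr=\tfrac{2\pi\tau}{3}R^3$; and when $\kappa<0$ the admissibility condition $1+\tfrac{\kappa}{4}(x^2+y^2)>0$ forces $|Z|\leq 2\tau/\sqrt{-\kappa}$, so $\int_{\Omega(R)}|Z|$ is at worst a constant times $\area(D_R(0))=\mathcal{O}(e^{R\sqrt{-\kappa}})$.

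Plugging these pieces into the master inequality finishes each case: in $\R^3$ the bound becomes $C(R^2+R\cdot R)=\mathcal{O}(R^2)$; in $\Nil(\tau)$ it becomes $C(R^2+R^3+R^2\cdot R)=\mathcal{O}(R^3)$; and in the $\kappa<0$ case it becomes $C(e^{R\sqrt{-\kappa}}+R\cdot e^{R\sqrt{-\kappa}})=\mathcal{O}(R\,e^{R\sqrt{-\kappa}})$, matching (a), (b), (c). The only step that requires genuine care is the $|Z|$ estimate in case (c): since $|Z|$ is already bounded on the admissible region, no exponential blowup is introduced by this term, and the exponential rate in the final bound is entirely produced by the hyperbolic volume and perimeter of $D_R(0)$ multiplied by the linear factor $h(R)$; checking this pointwise bound on $Z$ and verifying the correct interpretation of $|Z|$ in the $\M^2(\kappa)$ metric is the step I expect to be the main (small) technical obstacle.
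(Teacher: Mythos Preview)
Your proposal is correct and follows essentially the same route as the paper: reduce to Lemmas~\ref{lemma:area-estimate} and~\ref{lemma:area-estimate2} according to whether (i) or (ii) holds, then bound $\area(\Omega(R))$, $\int_{\Omega(R)}|Z|$, $h(R)$ and $\Length(\partial D_R(0))$ using the ball estimates of Section~2. The only difference is cosmetic: in case~(c) the paper computes $\int_{D_R(0)}(1+|Z|)$ explicitly in polar coordinates, whereas you use the cleaner observation that $|Z|=\tau\sqrt{x^2+y^2}<2\tau/\sqrt{-\kappa}$ is bounded on the disk model, which yields the same $\mathcal{O}(e^{R\sqrt{-\kappa}})$ bound with less work.
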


\begin{proof}
If $\kappa=0$ and $\tau\neq 0$, then $\Omega(R)\subset\{(x,y)\in\R^2:x^2+y^2<R^2\}$. As the metric in $\Omega\subset\R^2$ is the Euclidean one and $Z=-\tau y\partial_x+\tau x\partial_y$
\begin{equation}\label{eqn:bound4}
 \int_{\Omega(R)}(1+|Z|)\leq\int_{x^2+y^2<R^2}\left(1+\tau\sqrt{x^2+y^2}\right)=\pi R^2+\frac{2\pi\tau}{3}R^3.
\end{equation}
Lemma~\ref{lemma:geodesic-balls-nil} implies that we can choose $h(R)=CR^2$ for some $C>0$ and $R>\frac{\pi}{2\tau}$. If (i) holds then we can apply Lemma~\ref{lemma:area-estimate} with $\Lambda(R)=\emptyset$ and $\int_{\Gamma(R)}|u|=0$. Since $\Theta(R)\subset\partial D_R(0)$, we also have that $h(R)\Length(\Theta(R)\cup\Lambda(R))\leq 2C\pi R^3$, giving at most cubic area growth. If (ii) holds, then we directly apply Lemma~\ref{lemma:area-estimate2} with $h(R)\Length(\partial\Omega(R))\leq 2CK\pi R^3$, and we are done. Note that the case of $\E(\kappa,\tau)=\R^3$ is similar to this one, but taking into account that $Z=0$ and $h(R)=R$, so the estimate only gives quadratic terms.

Let us now consider the case $\kappa<0$. Likewise we compare $\Omega(R)$ and $D_R(0)\subset\H^2(\kappa)$, given by the inequality $x^2+y^2\leq\frac{4}{-\kappa}\tanh^2(\frac{\sqrt{-\kappa}}{2}R)$. Moreover $Z=-\tau y\frac{\partial_x}{\lambda}+\tau x\frac{\partial_y}{\lambda}$ so $|Z|=\tau\sqrt{x^2+y^2}$ in $\H^2(\kappa)$. By using polar coordinates, it is not difficult to show that
\begin{align*}
 \int_{\Omega(R)}(1+|Z|)&\leq\tfrac{4\pi}{-\kappa}\sinh^2\left(\tfrac{\sqrt{-\kappa}}{2}R\right)+\tfrac{2\tau}{-\kappa}\left(\tfrac{1}{\sqrt{-\kappa}}\sinh\left(\sqrt{-\kappa}R\right)-R\right).
\end{align*}
It is also straightforward to compute $\Length(\partial D_R(0))=\frac{2\pi}{\sqrt{-\kappa}}\sinh(\sqrt{-\kappa}R)$, and Lemma~\ref{lemma:geodesic-balls-psl} allows us to consider $h(R)$ as a linear function. We conclude by the same argument as in the case $\kappa=0$, and applying either Lemma~\ref{lemma:area-estimate} or Lemma~\ref{lemma:area-estimate2}.
\end{proof}

As a first consequence of Theorem \ref{thm:area-growth-minimal}, we will use the Daniel correspondence~\cite{Dan} to obtain some intrinsic area estimates for a complete constant mean curvature $H$ graph in $\E(\kappa,\tau)$. We recall that, if $4H^2+\kappa>0$, then the only complete graphs with constant mean curvature $H$ are the horizontal slices $\mathbb{S}^2\times\{t_0\}$ in $\mathbb{S}^2\times\mathbb{R}$ (see \cite{MPR}), so the next result cover all non-trivial cases.

\begin{theorem}\label{thm:area-graphs}
Let $\Sigma\subset\E(\kappa,\tau)$ be a graph with constant mean curvature $H$ such that $4H^2+\kappa\leq 0$, and suppose that $\Sigma$ is complete.
\begin{itemize}
 \item[(a)] If $\kappa+4H^2=0$, then $\Sigma$ has at most cubic intrinsic area growth.
 \item[(b)] If $\kappa+4H^2<0$, then $\Sigma$ has at most intrinsic area growth of order $R\mapsto R\,e^{R\sqrt{-\kappa-4H^2}}$.
\end{itemize}
\end{theorem}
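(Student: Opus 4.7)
My plan would be to reduce the constant mean curvature case to the minimal case already handled in Theorem~\ref{thm:area-growth-minimal} via the Daniel correspondence~\cite{Dan}. Since $\Sigma$ is a complete graph over a simply connected domain in $\M^2(\kappa)$, it is itself simply connected, and Daniel's theorem therefore yields a global isometric immersion $f\colon\Sigma\to\widetilde\Sigma\subset\E(\widetilde\kappa,\widetilde\tau)$, with $\widetilde\kappa=\kappa+4H^2\leq 0$ and $\widetilde\tau=\sqrt{H^2+\tau^2}$, such that $\widetilde\Sigma$ is minimal and shares with $\Sigma$ the angle function with the vertical Killing field. In particular, $4H^2+\kappa\leq 0$ places the target inside the regime covered by Theorem~\ref{thm:area-growth-minimal}. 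Since $\Sigma$ is a graph its angle function vanishes nowhere, hence $\widetilde\Sigma$ is transverse to the new vertical direction everywhere and is therefore a local graph.

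The central intermediate step is to upgrade ``local graph'' to ``entire graph'': I would argue that $\pi\circ f\colon\widetilde\Sigma\to\M^2(\widetilde\kappa)$ is a $1$-Lipschitz local diffeomorphism from a complete simply connected Riemannian surface onto the simply connected space form $\M^2(\widetilde\kappa)$, so by a standard covering-map argument in the style of Cartan--Hadamard it is a global diffeomorphism, and $\widetilde\Sigma$ is an entire minimal graph. I would then apply Theorem~\ref{thm:area-growth-minimal} to $\widetilde\Sigma$, noting that hypothesis (ii) is trivially satisfied with $\Omega=\M^2(\widetilde\kappa)$ and $K=1$: this produces at most cubic extrinsic area growth when $\widetilde\kappa=0$ (case (a), $\kappa+4H^2=0$), and at most extrinsic area growth of order $R\mapsto R\,e^{R\sqrt{-\widetilde\kappa}}=R\,e^{R\sqrt{-\kappa-4H^2}}$ when $\widetilde\kappa<0$ (case (b)).

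To pass from extrinsic area growth of $\widetilde\Sigma$ to intrinsic area growth of $\Sigma$, I use two facts. First, $f$ is a global isometry of abstract Riemannian surfaces, so intrinsic balls correspond and $\area(B_R^\Sigma(p))=\area(B_R^{\widetilde\Sigma}(f(p)))$ for every $p\in\Sigma$. Second, intrinsic distance on $\widetilde\Sigma$ dominates the ambient distance in $\E(\widetilde\kappa,\widetilde\tau)$, so $B_R^{\widetilde\Sigma}(f(p))\subset\widetilde\Sigma\cap B_R(f(p))$. Combining these observations with the extrinsic estimates from the previous paragraph delivers (a) and (b). The main obstacle will be ensuring that $\widetilde\Sigma$ is really an entire graph; in the critical case this is essentially Daniel's classical correspondence between entire CMC graphs in $\H^2(\kappa)\times\R$ and entire minimal graphs in $\Nil(\widetilde\tau)$, while in the subcritical case a more delicate completeness argument is required. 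A robust fallback, should entireness fail, would be to extend Lemma~\ref{lemma:area-estimate2} to multi-graphs by keeping track of multiplicity in the projection onto $\M^2(\widetilde\kappa)$; the divergence-based argument of Lemma~\ref{lemma:area-estimate} is local in nature, so it transports sheet by sheet and yields the same bounds in the base.
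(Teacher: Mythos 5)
Your overall strategy (Daniel correspondence to a minimal sister, apply Theorem~\ref{thm:area-growth-minimal}, then use that intrinsic area growth is dominated by extrinsic area growth together with the isometry of sister surfaces) is exactly the route the paper takes. However, the central intermediate step you propose --- that the sister surface $\widetilde\Sigma$ is an \emph{entire} graph --- contains a genuine gap, and in case (b) the claim is simply false. Ideal Scherk graphs are complete minimal graphs in $\H^2(\kappa)\times\R$ or $\PSL$ over proper ideal polygonal domains, and they arise as sisters of complete subcritical CMC graphs; so $\pi\circ f$ need not be onto $\M^2(\widetilde\kappa)$. Your covering-map argument cannot work because the projection of a graph is distance-\emph{decreasing} ($1$-Lipschitz), whereas the standard Cartan--Hadamard-style criterion requires a local diffeomorphism that does not decrease lengths of tangent vectors; with the inequality pointing the wrong way, completeness of $\widetilde\Sigma$ gives no control on lifts of paths in the base. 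Even in case (a), where entireness does hold, it is a nontrivial theorem of Daniel--Hauswirth~\cite{DanHau} (complete minimal graphs in $\Nil(\tau)$ are entire), not a soft covering argument. Your fallback via multigraphs is also a red herring: the sister \emph{is} a genuine graph; the issue is only that its domain may be proper.

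The missing ingredient is~\cite[Theorem~1]{ManRod}: a complete CMC $H$ graph (equivalently, a complete surface transversal to the vertical Killing field with the same angle function) is a graph over a domain whose boundary consists of complete curves of geodesic curvature $\pm 2H$; applied to the minimal sister, this makes $\widetilde\Sigma$ a complete minimal graph over a domain $\Omega^*\subset\M^2(\kappa+4H^2)$ bounded by \emph{geodesics}. In case (b) one then verifies hypothesis (ii) of Theorem~\ref{thm:area-growth-minimal} directly: each geodesic segment of $\partial\Omega^*(R)$ is no longer than the arc of $\partial D_R(0)$ joining its endpoints, so $\Length(\partial\Omega^*(R))$ is controlled by $\Length(\partial D_R(0))$, and the theorem applies without any entireness. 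In case (a) one invokes~\cite{DanHau} to get entireness and again applies Theorem~\ref{thm:area-growth-minimal}. With these two repairs your argument closes; the final passage from extrinsic growth of $\widetilde\Sigma$ to intrinsic growth of $\Sigma$ is correct as you stated it.
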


\begin{proof}
Under the assumption of completeness, $\Sigma$ is the graph of a function defined on a domain of $\M^2(\kappa)$ whose boundary (possibly empty) consists of complete curves of constant geodesic curvature $\pm 2H$ (see~\cite[Theorem 1]{ManRod}). In particular, $\Sigma$ is simply connected and Daniel correspondence~\cite{Dan} yields the existence of a sister minimal surface $\Sigma^*$ immersed in $\E(\kappa+4H^2,\sqrt{H^2+\tau^2})$. The surface $\Sigma^*$ is isometric to $\Sigma$ and has the same angle function, so it is also complete and transversal to the vertical Killing vector field. By means of~\cite[Theorem 1]{ManRod}, this implies that $\Sigma^*$ is also a complete minimal vertical graph over some domain of $\M^2(\kappa+4H^2)$.
\begin{itemize}
 \item[(a)] If $\kappa+4H^2=0$, then $\Sigma^*$ is a complete minimal graph in $\Nil(\sqrt{H^2+\tau^2})$, and we deduce from~\cite{DanHau} that $\Sigma^*$ is entire, so it has at most cubic extrinsic area by Theorem~\ref{thm:area-growth-minimal}.
 \item[(b)] If $\kappa+4H^2<0$, then $\Sigma^*$ is the graph on a domain of $\H^2(\kappa+4H^2)$ bounded by geodesic curves. Given $R>0$, the set $\partial\Omega(R)$ consists of finitely-many geodesic segments and finitely-many arcs in $\partial D_R(0)$. Since geodesics in $\H^2(\kappa+4H^2)$ minimize length, it turns out that each of the geodesic segments in $\partial\Omega(R)$ has smaller length than the arc in $\partial D_R(0)$ connecting its two endpoints. It follows that $\Length(\partial\Omega(R))\leq\Length(\partial D_R(0))$, so $\Sigma^*$ has extrinsic area growth of order at most $R\mapsto R\,e^{\sqrt{-\kappa-4H^2}R}$ by Theorem~\ref{thm:area-growth-minimal}. 
\end{itemize}
These estimates also hold for the intrinsic area growth, since it is always bounded from above by the extrinsic one. Since the correspondence is isometric, we deduce that $\Sigma$ has the same intrinsic area growth as $\Sigma^*$, which finishes the proof.
\end{proof}

In the following subsections, we will apply our results about extrinsic area growth, in order to estimate the area of some known examples.

\subsection{ Area of horizontal umbrellas in ${\mathbb E}(\kappa,\tau)$}
 \label{ex:area-growth-horizontal}
Let us consider $\Sigma$ to be the plane $z=0$ in the model for $\E(\kappa,\tau)$, which is nothing but the horizontal umbrella centered at the origin (i.e., it is the union of all horizontal geodesics in $\E(\kappa,\tau)$ passing through the origin) and it is minimal.
\begin{itemize}
\item If $\E(\kappa,\tau)=\Nil(\tau)$, since horizontal geodesics are Euclidean geodesics, it follows that $\Sigma\cap B_R(0)=\{(x,y,0):x^2+y^2<R^2\}$ for all $R>0$. It is easy to compute
\[\area(\Sigma\cap B_R(0))=\frac{2\pi}{3\tau^2}\left((1+\tau^2R^2)^{3/2}-1\right)=\frac{2\pi}{3}\tau R^3+O(R).\]
\item In the case $\kappa<0$ (for any $\tau$), the expression of horizontal geodesics in Section \ref{sec:geodesic-balls-psl} gives
\[\Sigma\cap B_R(0)=\left\{(x,y,0)\in\mathbb{D}\left(\tfrac{2}{\sqrt{-\kappa}}\right)\times\R:x^2+y^2\leq\tfrac{4}{-\kappa}\tanh^2\left(\tfrac{\sqrt{-\kappa}}{2}R\right)\right\}.\]
The area of $\Sigma\cap B_R(0)$ can be computed easily via the parametrization $(x,y)\mapsto(x,y,0)$ and using polar coordinates. After some computations we get
\[\area(\Sigma\cap B_R(0))=2\pi \int_0^{\frac{2}{\sqrt{-\kappa}}\tanh(\frac{\sqrt{-\kappa}}{2}R)}\frac{r\sqrt{1+\tau^2r^2}}{(1+\frac{\kappa}{4}r^2)^2}\df r=\pi \frac{\sqrt{4\tau^2-\kappa}}{-\kappa\sqrt{-\kappa}}e^{\sqrt{-\kappa}R}+O(R).\]
\end{itemize}
Note that in both cases the intrinsic ball of radius $R$ centered at the origin is given by $B_R^\Sigma(0)=\Sigma\cap B_R(0)$. This is due to the fact that horizontal geodesics are always minimizing, and proves that intrinsic and extrinsic area growths coincide for horizontal umbrellas.

It is worth mentioning that umbrellas in $\Nil(\tau)$ are hyperbolic surfaces (i.e., conformally equivalent to the unit disk $\mathbb{D}$), see~\cite[Example 8.1]{Dan2}. We can generalize this idea for $\kappa\leq 0$ and $\tau\neq 0.$ In fact,  it is easy to check that the global parameterization
\[\Phi:\mathbb{D}(\sigma)\to\E(\kappa,\tau),\qquad\Phi(u,v)=\left(\frac{2u}{\tau(1-u^2-v^2)},\frac{2v}{\tau(1-u^2-v^2)},0\right)\]
is well-defined and conformal, where $\mathbb{D}(\sigma)\subset\R^2$ is the disk of radius $\sigma=\frac{1}{2\tau}(\sqrt{-\kappa+4\tau^2}-\sqrt{-\kappa})<1$ with center at the origin. If $\tau=0$, then horizontal umbrellas are nothing but horizontal sections $\M^2(\kappa)\times\{t_0\}$, which are parabolic for $\kappa\geq 0$ and hyperbolic otherwise.

\subsection{Area of Figueroa-Mercuri-Pedrosa examples}

Let $\theta\in\R$, and let $\Sigma_{\theta}$ be the entire minimal graph of the function $f_{\theta}$ given by~\eqref{eqn:FMP-vertical}. In this case we are able to compute the exact intrinsic and extrinsic area growths.

\begin{proposition}
\label{FMP-cubic}
The minimal graph $\Sigma_\theta\subset\Nil(\tau)$ satisfies the following properties:
\begin{enumerate}
 \item[(a)] $\Sigma_\theta$ has extrinsic and intrinsic cubic area growth.
 \item[(b)] $\Sigma_\theta$ is a parabolic surface.
\end{enumerate}
\end{proposition}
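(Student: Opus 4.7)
The plan is to find natural intrinsic coordinates on $\Sigma_\theta$ in which both statements become transparent. First, I would compute the induced metric. Parametrizing $\Sigma_\theta$ by $(x,y)\mapsto(x,y,f_\theta(x,y))$ and using the fact that the $\Nil(\tau)$-metric reads $ds^2=dx^2+dy^2+(dz+\tau y\,dx-\tau x\,dy)^2$, a straightforward computation yields
\[dz+\tau y\,dx-\tau x\,dy = 2\tau y\,dx+\sinh\theta\sqrt{1+4\tau^2y^2}\,dy\]
on $\Sigma_\theta$, so the induced metric is
\[g=(1+4\tau^2 y^2)\,dx^2+4\tau y\sinh\theta\sqrt{1+4\tau^2y^2}\,dx\,dy+\bigl[1+\sinh^2\theta(1+4\tau^2 y^2)\bigr]\,dy^2.\]
Completing the square in $dx$ and introducing the global coordinates
\[u=x+\tfrac{\sinh\theta}{2\tau}\sqrt{1+4\tau^2y^2},\qquad v=\cosh(\theta)\,y,\]
diagonalizes $g$ to
\[g=(1+a^2 v^2)\,du^2+dv^2,\qquad a=\tfrac{2\tau}{\cosh\theta},\]
with $(u,v)\in\R^2$.

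For~(b), I would then change to isothermal coordinates via $\tilde u=u$, $\tilde v=\tfrac{1}{a}\arcsinh(av)$, which is a diffeomorphism $\R^2\to\R^2$ (since $\int_0^\infty ds/\sqrt{1+a^2s^2}=\infty$), bringing $g$ to the form $\cosh^2(a\tilde v)(d\tilde u^2+d\tilde v^2)$. As this exhibits $\Sigma_\theta$ as conformally equivalent to $\C$, the uniformization theorem yields parabolicity.

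For~(a), the cubic upper bounds are already in hand: the intrinsic one follows from Theorem~\ref{thm:area-graphs}(a) applied to the complete entire minimal graph $\Sigma_\theta$ (which is critical since $4H^2+\kappa=0$), and the extrinsic one from Theorem~\ref{thm:area-growth-minimal}(b), whose hypothesis~(ii) is trivially satisfied for $\Omega=\R^2$. Since intrinsic area growth is majorized by extrinsic area growth, it then suffices to prove a cubic lower bound for the intrinsic area. Direct verification of the geodesic equations for $g=(1+a^2v^2)du^2+dv^2$ shows that both $\{v=0\}$ and the vertical lines $\{u=\text{const}\}$ are unit-speed geodesics, so the triangle inequality gives $B_R^\Sigma\supset\{(u,v):|u|+|v|\leq R\}$. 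Integrating the area element $\sqrt{1+a^2 v^2}\,du\,dv$ on this region produces
\[\int_{-R}^R 2(R-|v|)\sqrt{1+a^2v^2}\,dv,\]
which grows as $cR^3$ for some $c>0$.

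The main obstacle is spotting the change of variables $(x,y)\mapsto(u,v)$ in the first step. Once the induced metric is brought to the normal form $(1+a^2v^2)du^2+dv^2$, both the conformal type and the exact cubic area growth follow by direct inspection.
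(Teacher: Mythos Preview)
Your proof is correct and follows essentially the same approach as the paper's. The paper quotes a conformal parametrization from~\cite{Lee} to obtain parabolicity, observes that all $\Sigma_\theta$ are intrinsically homothetic, and then reduces the cubic lower bound to the case $\theta=0$, where the induced metric is $(1+4\tau^2y^2)\,dx^2+dy^2$ and the same inscribed-square argument applies; you instead derive the diagonal form $(1+a^2v^2)\,du^2+dv^2$ for general $\theta$ by an explicit change of variables and then carry out the isothermal-coordinate and square arguments yourself. One minor comment: verifying that $\{v=0\}$ and $\{u=\text{const}\}$ are geodesics is unnecessary---any curve bounds the distance from above, and the lengths $|u|$ and $|v|$ of these segments are immediate from the form of $g$.
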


\begin{proof}
First of all, we observe that the global parametrization 
\[(u,v)\in\R^2\mapsto\left(\tfrac{1}{2\tau}(\cosh(\theta) u+\sinh(\theta)\cosh(v)),\tfrac{1}{2\tau}\sinh(v),\tfrac{1}{4\tau}(\cosh(\theta)u\sinh(v)-\sinh(\theta)v)\right)\]
is conformal and the metric of $\Sigma_\theta$ reads $\tfrac{1}{4\tau^2}\cosh^2(\theta)\cosh^2(v)(\df u^2+\df v^2)$ in these coordinates (see~\cite[Example 7]{Lee}). Hence $\Sigma_\theta$ is globally conformally $\C$, so it is parabolic (see also~\cite[Example 8.2]{Dan2}). Moreover, all the surfaces $\Sigma_\theta$ are intrinsically homothetic. We will prove that $\Sigma_0$ has at least cubic intrinsic area growth. From that, it follows that $\Sigma_\theta$ has at least cubic intrinsic area growth for all $\theta$, and then $\Sigma_\theta$ will have exactly cubic intrinsic and extrinsic area growth, since the extrinsic area grows faster than the intrinsic one, and it is at most cubic by Theorem~\ref{thm:area-growth-minimal}.

Via the parametrization $(x,y)\mapsto(x,y,\tau xy)$, the surface $\Sigma_0$ is isometric to $\R^2$ endowed with the metric $\df s^2=(1+4\tau^2y^2)\df x^2+\df y^2$. Given $(x,y)\in\R^2$, let us consider the curve $\alpha$ joining $(0,0)$ and $(x,y)$ consisting in two straight segments, $\alpha_1$ (joining $(0,0)$ and $(x,0)$), and $\alpha_2$ (joining $(x,0)$ and $(x,y)$). It is easy to see that $\Length(\alpha_1)=|x|$ and $\Length(\alpha_2)=|y|$ with respect to $\df s^2$. This means that the $\df s^2$-distance from $(0,0)$ and $(x,y)$ is smaller than $|x|+|y|$, so the geodesic ball $B^{\Sigma_0}_R(0)$ in $\Sigma_0\equiv(\R^2,\df s^2)$ centered at $(0,0)$ of radius $R$ contains the square $S(R)$ of vertexes $(0,\pm R)$ and $(\pm R,0)$. Since the area element for $\df s^2$ in the $(x,y)$-coordinates is $\sqrt{1+4\tau^2y^2}$, we obtain the following lower bound for the area of $B^{\Sigma_0}_R(0)$:
\begin{align*}
\area(B^{\Sigma_0}_R(0))&\geq\int_{S(R)}\sqrt{1+4\tau^2y^2}\df x\df y=4\int_{0}^{R}\int_0^{R-y}\sqrt{1+4\tau^2y^2}\df x\df y
\\
&=\frac{1}{3 \tau^2}\left(1+(2\tau^2R^2-1)\sqrt{1+4\tau^2R^2}+3\tau R\arcsinh(2\tau R)\right).
\end{align*}
Hence, $\area(B^{\Sigma_0}_R(0))\geq\frac{4}{3}\tau R^3+O(R^2)$ and we are done.
\end{proof}

Using this result, we will correct a mistake in the Bernstein theorem for horizontal minimal multigraphs in $\Nil(\frac{1}{2})$ given in~\cite{MPR}. 

Let us take the surface $\widehat\Sigma_\theta\subset\Nil(\frac{1}{2})$ parametrized by
\[(u,v)\in\R^2\mapsto\left(g_\theta(u,v),u,v+\frac{1}{2}ug_\theta(u,v)\right),\]
where $g_\theta:\R^2\to\R^2$ is given by
\[g_\theta(u,v)=v+\frac{\sinh(\theta)}{2}\left((1+u)\sqrt{1+(1+u)^2}+\arcsinh(1+u)\right).\]
Hence $\widehat\Sigma_\theta$ is an entire graph in the direction of the Killing vector field $X=E_1+yE_3$ (see~\cite[Section 5]{MPR}) and  the isometry $F:\Nil(\frac{1}{2})\to\Nil(\frac{1}{2})$ given by $F(x,y,z)=(x,y+1,z-\frac{1}{2}x)$ satisfies $F(\widehat\Sigma_\theta)=\Sigma_\theta$. In particular, $\widehat\Sigma_\theta=F^{-1}(\Sigma_\theta)$ is an entire minimal graph in the direction of $X$, and it is parabolic by Proposition~\ref{FMP-cubic}. Theorem 3 in~\cite{MPR} contains a subtle mistake in the way $\Sigma_\theta$ is discarded as a horizontal graph (it the proof, both the surface and the Killing vector field were normalized under an ambient isometry, but this yields a loss of generality). It is fixed as follows:

\begin{theorem}[Correction of Theorem 3 in~\cite{MPR}]~\label{thm:correction}
Let $\Sigma\subset\Nil(\frac{1}{2})$ be a complete minimal surface transversal to the Killing vector field $X=E_1+yE_3$. If $\Sigma$ is parabolic, then it is either a vertical plane or an invariant surface $\Sigma_\theta$, up to an ambient isometry.
\end{theorem}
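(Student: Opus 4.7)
The plan is to redo the proof of Theorem~3 in~\cite{MPR} with only one substantive change: use the ambient isometries of $\Nil(\tfrac{1}{2})$ to normalize the right-invariant Killing vector field $X$ alone, and \emph{not} to simultaneously move the surface $\Sigma$ to a distinguished position with respect to this $X$. This is the single place at which the original argument loses generality.

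Before running the proof, I would record the family of examples that must appear in the conclusion, so that one cannot accidentally exclude them. Namely, the surfaces $\widehat\Sigma_\theta = F^{-1}(\Sigma_\theta)$, with $F(x,y,z)=(x,y+1,z-\tfrac{1}{2}x)$, are complete entire minimal graphs in the direction of $X=E_1+yE_3$ by construction, and they are parabolic because $F$ is an ambient isometry and $\Sigma_\theta$ is parabolic by Proposition~\ref{FMP-cubic}. Hence every $\widehat\Sigma_\theta$ satisfies the hypotheses of the theorem yet is congruent, via $F$, to the invariant surface $\Sigma_\theta$ rather than to a vertical plane; the corrected statement simply has to accommodate them.

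With these examples in hand I would replay the parabolicity argument of~\cite{MPR} unchanged: because $X$ is Killing and $\Sigma$ is transversal to $X$, the function $\langle N, X \rangle$ is a nowhere-vanishing Jacobi field on $\Sigma$, so $\Sigma$ is stable and, by parabolicity together with the existence of a positive Jacobi function, one derives the rigidity dichotomy that either $\Sigma$ is totally geodesic or it carries a continuous one-parameter family of symmetries inherited from $X$. In the totally geodesic case, the classification of totally geodesic surfaces in $\Nil(\tfrac{1}{2})$ yields a vertical plane. In the invariant case, the Figueroa--Mercuri--Pedrosa classification~\cite{FMP} of minimal surfaces in $\Nil(\tfrac{1}{2})$ invariant by a one-parameter group of left-invariant isometries identifies $\Sigma$, up to ambient isometry, with some $\Sigma_\theta$.

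The main obstacle, and the subtle point that was mishandled in~\cite{MPR}, is precisely the bookkeeping of ambient isometries in this last step. In the original argument an ambient isometry was used simultaneously to normalize $X$ and to send a chosen point of $\Sigma$ to the origin; this double normalization implicitly fixes the value of the $y$-coordinate at a point of $\Sigma$, which is exactly the hidden constraint that discards the family $\widehat\Sigma_\theta$ (since along $\widehat\Sigma_\theta$ the $yE_3$-summand of $X$ does not drop out). The fix is to spend the ambient isometry on $X$ only, and then let the Figueroa--Mercuri--Pedrosa classification, applied to the one-parameter symmetry produced by the parabolicity argument, furnish the missing congruent copy $\Sigma_\theta$ in the final conclusion.
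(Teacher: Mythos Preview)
Your proposal is correct and follows essentially the same approach as the paper. The paper does not give a standalone proof environment for this theorem; its justification is the paragraph immediately preceding the statement, which (i) exhibits the surfaces $\widehat\Sigma_\theta=F^{-1}(\Sigma_\theta)$ as parabolic entire minimal graphs in the direction of $X$ via Proposition~\ref{FMP-cubic}, and (ii) identifies the error in~\cite{MPR} as the simultaneous normalization of both the surface and the Killing vector field by an ambient isometry---precisely the two points your sketch is built around. Your additional outline of the Jacobi-field/stability mechanism from~\cite{MPR} is more detail than the paper provides (it simply defers to~\cite{MPR} once the error is named), but it is consistent with that reference and with the corrected conclusion.
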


\subsection{Area of ideal Scherk graphs ($\kappa<0$)} 
Let $\Omega\subset\H^2(\kappa)$ be an unbounded domain whose boundary is an ideal polygon consisting of finitely-many complete curves with alternating constant geodesic curvature $\pm 2H$, meeting at some points at the ideal boundary $\partial_\infty\H^2(\kappa)$. Let $\Sigma\subset\E(\kappa,\tau)$ be the graph  of a function $u$ defined over $\Omega$, with constant mean curvature $H$ satisfying $4H^2+\kappa<0$, and such that $u$ has boundary values $\pm\infty$ along each curve in $\partial\Omega$. Such surface  $\Sigma$ is known as an ideal Scherk graph. The existence of ideal Scherk graphs in $\H^2\times\R$ was proven by Collin and Rosenberg in \cite{CR}, under some conditions on the shape of the domain $\Omega$. In fact, the conditions were inspired by those founded by Jenkins and Serrin for a minimal graph in $\R^3$  with infinite boundary values. Analogous existence results were given by Folha and Melo \cite{FM} ($0<H<\frac{1}{2}$ in $\H^2\times\R$) and Melo \cite{Me} ($H=0$ in $\PSL$).

The aim of this section is to show that the area growth of ideal Scherk graphs is similar to the area growth of the vertical surfaces they are asymptotic to (Theorem \ref{area-scherk-theo}). 

\begin{remark}
Let $\Sigma=\pi^{-1}(\Gamma)\subset\E(\kappa,\tau)$, being $\Gamma\subset\M^2(\kappa)$ a geodesic. Then $\Sigma$ is minimal and has quadratic intrinsic area growth provided that $\kappa\leq 0$, for it is isometric to $\R^2$ endowed with the Euclidean flat metric. Let us suppose that $\Sigma$ is given by the equation $x=0$ in our model.
\begin{itemize}
 \item If  $\kappa=0,$ $\E(\kappa,\tau)=\Nil(\tau)$, we get by Lemma~\ref{lemma:equivalence-distances} ($\alpha=1$) that there exist $m,M>0$ such that $\Sigma\cap C_{mR}\subset\Sigma\cap B_R(0)\subset\Sigma\cap C_{MR}$ for $R>\frac{\pi}{2\tau}$. Here we have considered the cylinders $C_R=D_R(0)\times]\!-\!R^2,R^2[$ that satisfy $\area(\Sigma\cap C_R)=4R^3$ for all $R>0$, which implies that $\Sigma$ has cubic extrinsic area growth.
 \item If $\kappa<0$, then $B_R(0)\subset D_R(0)\times]-MR,MR[$ for some $M>0$ by Lemma~\ref{lemma:geodesic-balls-psl}, so $\area(B_R(0)\cap\Sigma)\leq\area(\Sigma\cap(D_R(0)\times]-MR,MR[))\leq 4MR^2$ and $\Sigma$ has at most quadratic extrinsic area growth. Since the intrinsic area growth is quadratic and it represents a lower bound for the extrinsic one, we deduce that the extrinsic area growth of $\Sigma$ is also quadratic.
\end{itemize}
\end{remark}

Next we will analyze the area of the projection of a complete graph $\Sigma$ in $\E(\kappa,\tau)$, which will be the key step in the proof of Theorem \ref{area-scherk-theo}. Let us mention that the angle function $\nu=\langle E_3,N\rangle$, being $N$ the upward-pointing unit normal to $\Sigma$, is such that $\int_G\nu=\area(\pi(G))$ for any region $G\subset\Sigma$, where $\pi:\E(\kappa,\tau)\to\H^2(\kappa)$ denotes the usual projection. This assertion follows from the fact that the Jacobian of $\pi_{|\Sigma}$ equals $|\nu|$, and $\nu>0$ because of our choice of the unit normal $N$.

\begin{proposition}\label{prop:finite-area}
Let $\Sigma\subset\E(\kappa,\tau)$ be a complete graph with constant mean curvature $H$ such that $4H^2+\kappa<0$, and projecting onto a domain $\Omega\subset\H^2(\kappa)$.
\begin{itemize}
 \item[(a)] If $\Sigma$ is an ideal Scherk graph and $\Omega$ has $2n$ ideal vertexes, then $\Omega$ has finite area given by
\[\area(\Omega)=\frac{2(n-1)\pi}{-\kappa-4H^2}.\]
\item[(b)] If $\Sigma$ is not an ideal Scherk graph, then $\Omega$ has infinite area.
\end{itemize}
\end{proposition}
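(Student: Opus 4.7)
The plan is to pass to Daniel's sister surface, where the boundary curves become geodesics, and then to apply Gauss--Bonnet. Since $\Sigma$ is a simply connected complete CMC-$H$ graph with $4H^2+\kappa<0$, Daniel's correspondence~\cite{Dan} yields a complete minimal sister $\Sigma^{*}$ immersed in $\E(\kappa',\sqrt{H^2+\tau^2})$, where $\kappa'=\kappa+4H^2<0$. Since $\Sigma^{*}$ is isometric to $\Sigma$ with the same angle function, \cite[Theorem 1]{ManRod} ensures that $\Sigma^{*}$ is itself a complete minimal graph over a domain $\Omega^{*}\subset\H^2(\kappa')$ bounded by complete curves of zero geodesic curvature, i.e., complete geodesics. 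The preservation of the angle function gives
\[
\area(\Omega)=\int_{\Sigma}\nu\,\df A=\int_{\Sigma^{*}}\nu^{*}\,\df A=\area(\Omega^{*}),
\]
which reduces the proposition to a geometric problem about $\Omega^{*}$ in $\H^2(\kappa')$.

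For part (a), the curves of $\partial\Omega$ along which $u\to\pm\infty$ correspond via Daniel to vertical geodesic arcs of $\Sigma^{*}$ whose projections are the geodesic sides of $\Omega^{*}$. An ideal vertex of $\Omega$ at $\partial_{\infty}\H^2(\kappa)$ corresponds to an ideal vertex of $\Omega^{*}$ at $\partial_{\infty}\H^2(\kappa')$ where two adjacent geodesic sides meet asymptotically. Thus $\Omega^{*}$ is a simply connected ideal polygon of $\H^2(\kappa')$ with $2n$ ideal vertices and geodesic sides. A standard truncation by small horocycles around each ideal vertex together with Gauss--Bonnet (the $k_g$-integral vanishes on geodesic sides, the horocycle contributions vanish in the limit, and each ideal vertex contributes exterior angle $\pi$) yield
\[
\kappa'\area(\Omega^{*})+2n\pi=2\pi\chi(\Omega^{*})=2\pi,
\]
from which $\area(\Omega)=\area(\Omega^{*})=\frac{2(n-1)\pi}{-\kappa-4H^{2}}$.

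For part (b) I would argue by contrapositive: assume $\area(\Omega)<\infty$ and show that $\Sigma$ is an ideal Scherk graph. Then $\area(\Omega^{*})<\infty$, so $\Omega^{*}\neq\H^2(\kappa')$ and $\partial\Omega^{*}$ is a non-empty disjoint union of complete geodesics. The key geometric step is to show that any simply connected domain of $\H^2(\kappa')$ with finite area whose boundary is a disjoint union of complete geodesics must be a finite-sided ideal polygon: viewing $\Omega^{*}$ in the closed Poincar\'{e} disk, its topological boundary is a Jordan curve that can only meet $\partial_{\infty}\H^2(\kappa')$ in a set of zero length (otherwise the area is infinite near the ideal boundary); two consecutive geodesic components that fail to share an ideal endpoint would enclose an infinite strip, contradicting finite area; and infinitely many ideal vertices would give infinite area via Gauss--Bonnet applied to an exhaustion by finite ideal polygons (each extra vertex contributes a fixed positive angular defect, hence a positive amount of area). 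Pulling back through the Daniel correspondence endows $\Omega$ with the same finite-vertex ideal-polygon structure, with $u\to\pm\infty$ alternately on consecutive sides, so $\Sigma$ is an ideal Scherk graph.

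The main obstacle is the geometric claim in part (b) characterising finite-area simply connected domains of $\H^2(\kappa')$ bounded by complete geodesics, where one has to rule out both ``open'' ends and infinitely many ideal vertices in a careful way. A secondary point is to justify that the alternation of the $\pm\infty$ boundary values in $\Sigma$ is automatically inherited from the fact that $\Sigma^{*}$ is a graph over $\Omega^{*}$: adjacent vertical geodesic boundaries of $\Sigma^{*}$ must correspond to opposite limits of $u$ on $\Sigma$, because otherwise $\Sigma^{*}$ would fold back and fail to be a graph near the common ideal vertex.
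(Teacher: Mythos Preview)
Your route via the Daniel sister is genuinely different from the paper's, but part~(a) has a circularity that undermines it. You assert that the sister of an ideal Scherk graph over a $2n$-vertex polygon is again an ideal Scherk graph over a $2n$-vertex polygon in $\H^2(\kappa')$; this is precisely the content of Corollary~\ref{coro:scherk-preserved}, which in the paper is \emph{deduced from} Proposition~\ref{prop:finite-area}, not the other way around. The Daniel correspondence is an isometry between the complete surfaces $\Sigma$ and $\Sigma^{*}$ that preserves the angle function, and \cite{ManRod} tells you $\Sigma^{*}$ is a complete minimal graph over some $\Omega^{*}$ bounded by geodesics; but nothing in that toolkit identifies the components of $\partial\Omega$ with those of $\partial\Omega^{*}$, matches ideal vertices to ideal vertices, or even guarantees finitely many sides of $\Omega^{*}$. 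Your sentence ``the curves of $\partial\Omega$ along which $u\to\pm\infty$ correspond via Daniel to vertical geodesic arcs of $\Sigma^{*}$'' is not meaningful as stated, since neither $\Sigma$ nor $\Sigma^{*}$ has boundary. The same gap reappears at the end of part~(b), where ``pulling back through the Daniel correspondence endows $\Omega$ with the same finite-vertex ideal-polygon structure'' is again Corollary~\ref{coro:scherk-preserved} in disguise.

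The paper avoids this entirely by working directly on $\Omega\subset\H^2(\kappa)$. Gauss--Bonnet on $\Omega(R)=\Omega\cap D_R(0)$ gives an identity involving $2H(\alpha(R)-\beta(R))$, the signed length of the $\pm 2H$-curvature arcs; the flux formula for the tangential part $T=E_3-\nu N$ of the vertical Killing field (using $\div_\Sigma T=2H\nu$) rewrites $\alpha(R)-\beta(R)$ in terms of $2H\,\area(\Omega(R))$ plus a boundary term bounded by $\ell(R)=\Length(\Omega\cap\partial D_R(0))$. Combining the two yields
\[
(-\kappa-4H^{2})\,\area(\Omega(R))=-2\pi+\Theta(R)+\kappa_g(R)\ell(R)-2H\!\int_{\partial\Sigma(R)}\!\langle T,\eta\rangle,
\]
after which both parts follow by letting $R\to\infty$: in~(a) one has $\ell(R)\to 0$ and $\Theta(R)=2n\pi$, while in~(b) either $\Omega$ meets $\partial_\infty\H^2(\kappa)$ in an arc or $\Theta(R)\to\infty$. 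This is the missing idea: the flux formula is what converts the $\pm 2H$ geodesic-curvature terms into the $4H^{2}$ you need, without ever invoking the sister construction.
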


\begin{proof}
Given $R>0$, let us consider $\Omega(R)=\Omega\cap D_R(0)$. Then $\partial\Omega(R)$ can be decomposed in three finite families of arcs, namely, those with geodesic curvature $2H$ (along which $\Sigma$ takes $+\infty$ limit boundary value), those with geodesic curvature $-2H$ (with $-\infty$ limit boundary value) and those in $\Omega\cap\partial D_R(0)$. Geodesic curvature is always computed with respect to the inner conormal vector field to $\Omega(R)$ along its boundary. Let us call $\alpha(R)$, $\beta(R)$ and $\ell(R)$ the lengths of the segments in the first, second and third family, respectively. Gauss-Bonnet theorem applied to $\Omega(R)\subset\H^2(\kappa)$ yields the following identity:
\begin{equation}\label{eqn:area-estimate-gb}
\kappa\area(\Omega(R))=2\pi-2H(\alpha(R)-\beta(R))-\kappa_g(R)\ell(R)-\Theta(R),
\end{equation}
where $\kappa_g(R)>1$ is the geodesic curvature of $\partial D_R(0)$, and $\Theta(R)$ denotes the sum of all exterior angles at the vertexes of $\partial\Omega(R)$. Now, let us consider $T=E_3-\nu N$ to be the tangent part of the vertical Killing vector field $E_3$, which satisfies $\div_\Sigma(T)=2H\nu$. By a classical application of the divergence theorem to $T$ (also known as flux formula) on $\Sigma(R)=\pi^{-1}_{|\Sigma}(\Omega(R))$, it follows that
\begin{equation}\label{eqn:area-estimate-flux}
 2H \area(\Omega(R))=\int_{\Sigma(R)}\div_{\Sigma}(T)=\alpha(R)-\beta(R)+\int_{\partial\Sigma(R)}\langle T,\eta\rangle,
\end{equation}
where $\eta$ stands for an unit conormal to $\Sigma(R)$ along its boundary.
The term $\alpha(R)-\beta(R)$ appears since we indeed apply the divergence theorem to compact subdomains of $\Sigma(R)$ uniformly converging to $\Sigma(R)$, and the angle function uniformly tends to 0 along the boundary curves with geodesic curvature $\pm 2H$. Hence, combining~\eqref{eqn:area-estimate-gb} and \eqref{eqn:area-estimate-flux}, we get that
 \begin{equation}\label{eqn:area-estimate1}
 (-\kappa-4H^2)\area(\Omega(R))=-2\pi+\Theta(R)+\kappa_g(R)\ell(R)-2H\int_{\partial\Sigma(R)}\langle T,\eta\rangle,
 \end{equation}
Observe that $\partial\Sigma(R)$ consists of curves of infinite length, but the last integral in~\eqref{eqn:area-estimate1} is finite. If we parametrize one of these curves by $\gamma:\R\to\E(\kappa,\tau)$ with unit speed and consider $J$ to be the  $\frac\pi2$-rotation in  $T\Sigma$, we have that $\langle T,\eta\rangle=\langle JT,\gamma'\rangle$. Since $\{T,JT\}$ is an orthogonal frame on $\Sigma$ with $|T|^2=|JT|^2=1-\nu^2$, we can express $1-\nu^2=\langle T,\gamma'\rangle^2+\langle JT,\gamma'\rangle^2$, and hence
 \begin{equation}\label{eqn:area-estimate2}
 \left|\int_\gamma\langle T,\eta\rangle\right|\leq\int_\gamma|\langle JT,\gamma'\rangle|=\int_\gamma\sqrt{1-\nu^2-\langle T,\gamma'\rangle^2}\leq\int_\gamma\sqrt{1-\langle T,\gamma'\rangle^2}=\Length(\pi\circ\gamma).
 \end{equation}
The last equality in~\eqref{eqn:area-estimate2} follows from the fact that $\langle T,\gamma'\rangle=\langle E_3,\gamma'\rangle$ (note that $\gamma'$ is tangent), and the fact that $|(\pi\circ\gamma)'|^2+\langle E_3,\gamma'\rangle^2=1$, which follows from decomposing $\gamma'$ in vertical and horizontal components. Hence the absolute value of the last integral in~\eqref{eqn:area-estimate1} is at most $\ell(R)$.

Let us now take limits in~\eqref{eqn:area-estimate1} when $R\to\infty$ and distinguish two cases:
\begin{itemize}
 \item[(a)] If $\Sigma$ is an ideal Scherk graph, then it is easy to prove that $\lim_{R\to\infty}\ell(R)=0$, since two successive components of $\partial\Omega$ approach exponentially in $R.$ Moreover $\lim_{R\to\infty}\kappa_g(R)=-\kappa$. On the other hand, $\Theta(R)=2n\pi$ for $R$ sufficiently large (it suffices to take $R$ such that $\partial D_R(0)$ intersects transversally all the components of $\partial\Omega$). Thus it follows from~\eqref{eqn:area-estimate1} that $(-\kappa-4H^2)\area(\Omega)=2(n-1)\pi$ and the statement follows.
 
 \item[(b)] Let us now suppose that $\Sigma$ is not an ideal Scherk graph. Then we have two possible situations: either $\Omega$ contains an arc at infinity (so it is clear that $\area(\Omega)=\infty$ and we are done), or $\partial\Omega$ consists of infinitely-many curves of geodesic curvature $\pm 2H$. In the latter case, we have proved that $\kappa_g(R)\ell(R)-2H\int_{\partial\Sigma(R)}\langle T,\eta\rangle\geq(\kappa_g(R)-2H)\ell(R)>0$, and hence~\eqref{eqn:area-estimate1} implies $(-\kappa-4H^2)\area(\Omega(R))>-2\pi+\Theta(R)$ for all $R>0$. It suffices to check that $\lim_{R\to\infty}\Theta(R)=\infty$, but this is straightforward since $\partial\Omega(R)$ contains eventually an arbitrarily large number of vertexes, and the exterior angle at each of these vertexes converges to some value, bounded away from zero, only depending on $H$.\qedhere
\end{itemize}
 \end{proof}

\begin{corollary}\label{coro:scherk-preserved}
If $\Sigma\subset\E(\kappa,\tau)$  and $\Sigma^*\subset\E(\kappa^*,\tau^* )$ are sister surfaces by the Daniel correspondence, and $\Sigma$ is an ideal Scherk graph, then so is $\Sigma^*$. Moreover, they are graphs over ideal polygons with the same number of ideal vertexes.
\end{corollary}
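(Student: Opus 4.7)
The plan is to derive both assertions from Proposition~\ref{prop:finite-area} by exploiting two fundamental features of the Daniel correspondence: it provides an isometry $\Phi\colon\Sigma\to\Sigma^*$ that preserves the angle function (so $\nu^*\circ\Phi=\nu$), and it preserves the invariant combination $4H^2+\kappa=4(H^*)^2+\kappa^*$. Since $\Sigma$ is a vertical graph, $\nu>0$ everywhere, hence $\nu^*>0$ on $\Sigma^*$; together with the completeness transferred by $\Phi$, \cite[Theorem~1]{ManRod} guarantees that $\Sigma^*$ is itself a complete vertical graph over some domain $\Omega^*\subset\H^2(\kappa^*)$. Moreover $4(H^*)^2+\kappa^*=4H^2+\kappa<0$, so that Proposition~\ref{prop:finite-area} is applicable to $\Sigma^*$ as well.

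The key computation is that the Daniel correspondence preserves the area of the base domain. Using that the Jacobian of $\pi_{|\Sigma}$ equals $\nu$, and similarly $\nu^*$ for $\pi^*_{|\Sigma^*}$, together with $\Phi$ being an area-preserving isometry identifying the two angle functions, I would write
\[\area(\Omega^*)=\int_{\Sigma^*}\nu^*=\int_\Sigma \nu^*\circ\Phi=\int_\Sigma \nu=\area(\Omega).\]
Applying Proposition~\ref{prop:finite-area}(a) to $\Sigma$ gives $\area(\Omega)=\frac{2(n-1)\pi}{-\kappa-4H^2}<\infty$, so $\area(\Omega^*)$ is finite as well. The contrapositive of Proposition~\ref{prop:finite-area}(b), applied to $\Sigma^*$, then forces $\Sigma^*$ to be an ideal Scherk graph; write $2n^*$ for its number of ideal vertexes.

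To conclude, I would apply Proposition~\ref{prop:finite-area}(a) to $\Sigma^*$ and equate with the formula already obtained for $\Omega$:
\[\frac{2(n^*-1)\pi}{-\kappa^*-4(H^*)^2}=\area(\Omega^*)=\area(\Omega)=\frac{2(n-1)\pi}{-\kappa-4H^2}.\]
The Daniel invariance $\kappa+4H^2=\kappa^*+4(H^*)^2$ cancels the denominators and yields $n^*=n$. The only conceptual point of the argument is the identity $\area(\Omega^*)=\area(\Omega)$; once this is observed, Proposition~\ref{prop:finite-area} encodes the dichotomy ``ideal Scherk or infinite area'' and supplies the precise vertex count, so there is no genuine analytic obstacle in this corollary.
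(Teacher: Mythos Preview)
Your proof is correct and follows essentially the same approach as the paper: both use \cite{ManRod} to realize $\Sigma^*$ as a complete graph, transfer finiteness of $\area(\Omega)$ to $\area(\Omega^*)$ via the identity $\int_\Sigma\nu=\int_{\Sigma^*}\nu^*$ (preservation of the angle function by the correspondence), invoke the dichotomy of Proposition~\ref{prop:finite-area}, and then match the vertex count using $\kappa+4H^2=\kappa^*+4(H^*)^2$. Your write-up merely makes the area identity more explicit than the paper's one-line justification.
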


\begin{proof}
As mentioned in the proof of Theorem~\ref{thm:area-graphs}, if $\Sigma$ is an ideal Scherk graph, then it follows from~\cite{ManRod} that $\Sigma^*$ is a complete graph over a domain $\Omega^*=\pi(\Sigma^*)\subset\H^2(\kappa^*)$ bounded by curves of geodesic curvature $\pm 2H^*$, where $H^*$ is the mean curvature of $\Sigma^*$. Since $\Omega=\pi(\Sigma)$ has finite area by Proposition~\ref{prop:finite-area}.(a), the angle function is preserved by the correspondence, and the integral of the angle is the area of the projection, we get that $\Omega^*$ also has finite area, so it is a Scherk graph by Proposition~\ref{prop:finite-area}.(b). Since $\kappa+4H^2=\kappa^*+4{H^*}^2,$ the number of vertexes is also preserved.
\end{proof}

Finally we can prove the desired area estimate.

\begin{theorem}
\label{area-scherk-theo}
If $\Sigma\subset\E(\kappa,\tau)$ is an ideal Scherk graph, then $\Sigma$ has at most quadratic intrinsic area growth. In particular, the underlying conformal structure of $\Sigma$ is parabolic.

If $\Sigma$ is minimal, then it also has quadratic extrinsic area growth.
\end{theorem}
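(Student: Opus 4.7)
The plan is to first establish the extrinsic quadratic bound in the minimal case by a direct application of Lemma~\ref{lemma:area-estimate}, and then to transfer the intrinsic bound to the general constant mean curvature case through the Daniel correspondence together with Corollary~\ref{coro:scherk-preserved}.

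Assume first that $H=0$, and let $u$ be the graphing function of $\Sigma$ over $\Omega\subset\H^2(\kappa)$. Since $u$ takes $\pm\infty$ values along every component of $\partial\Omega$, we have $\Gamma(R)=\emptyset$, so the last summand in Lemma~\ref{lemma:area-estimate} vanishes. Proposition~\ref{prop:finite-area}(a) gives $\area(\Omega(R))\leq\area(\Omega)<\infty$. In our disk model of $\H^2(\kappa)$ the frame $\{\partial_x/\lambda,\partial_y/\lambda\}$ is orthonormal, so $|Z|=\tau\sqrt{x^2+y^2}\leq 2\tau/\sqrt{-\kappa}$ is globally bounded, and hence $\int_{\Omega(R)}|Z|$ is bounded independently of $R$. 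Lemma~\ref{lemma:geodesic-balls-psl} furnishes a linear $h(R)$. Finally, $\Length(\Theta(R))\to 0$ as $R\to\infty$, as already observed in the proof of Proposition~\ref{prop:finite-area}(a) (successive components of $\partial\Omega$ approach exponentially at each ideal vertex), while each of the finitely many components of $\Lambda(R)$ is an arc of a complete geodesic of $\H^2(\kappa)$ contained in $D_R(0)$, hence of length at most $2R$. Thus $\Length(\Theta(R)\cup\Lambda(R))=O(R)$, and Lemma~\ref{lemma:area-estimate} yields $\area(\Sigma\cap B_R(0))=O(R^2)$.

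For an ideal Scherk graph $\Sigma$ with arbitrary $H$ satisfying $4H^2+\kappa<0$, let $\Sigma^*\subset\E(\kappa+4H^2,\sqrt{H^2+\tau^2})$ be its Daniel sister surface. The base curvature $\kappa+4H^2$ is still negative, and Corollary~\ref{coro:scherk-preserved} ensures that $\Sigma^*$ is again an ideal Scherk graph, now minimal; the previous paragraph then gives it at most quadratic extrinsic area growth. Since intrinsic area growth is always bounded above by the extrinsic one and the Daniel correspondence is an isometry, $\Sigma$ inherits at most quadratic intrinsic area growth from $\Sigma^*$. Parabolicity then follows from the classical result of Cheng and Yau~\cite{CY} that any complete Riemannian surface with at most quadratic intrinsic area growth is parabolic.

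The only delicate point is the boundary length estimate in the minimal case, but there the bound on $\Length(\Lambda(R))$ is immediate from the fact that the components of $\partial\Omega$ are complete geodesics of $\H^2(\kappa)$. Routing the general $H$ case through the minimal sister via Corollary~\ref{coro:scherk-preserved} is precisely what allows us to avoid estimating non-geodesic constant-curvature curves when $H\neq 0$.
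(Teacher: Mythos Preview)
Your proof is correct and follows essentially the same strategy as the paper: bound the extrinsic area of the minimal (sister) ideal Scherk graph via the area-estimate lemma using that $\area(\Omega)$ is finite, $|Z|$ is bounded, $h(R)$ is linear, and $\Length(\partial\Omega(R))$ is linear (the $2n$ geodesic arcs each have length at most $2R$, while the circular arcs shrink to zero), and then transfer the intrinsic bound to the general $H$ case through the Daniel correspondence and Corollary~\ref{coro:scherk-preserved}. The only cosmetic differences are that you invoke Lemma~\ref{lemma:area-estimate} with $\Gamma(R)=\emptyset$ rather than Lemma~\ref{lemma:area-estimate2} (these coincide in this situation), and you organize the argument by treating the minimal case first as a separate statement before passing to the sister, whereas the paper goes directly to $\Sigma^*$.
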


\begin{proof}
Let $\Sigma^*\subset\E(\kappa+4H^2,\sqrt{\tau^2+H^2})$ be the sister minimal surface by Daniel correspondence, and let us prove that $\Sigma^*$ has at most quadratic extrinsic area growth. This implies that $\Sigma^*$, and hence $\Sigma$, has at most quadratic intrinsic area growth, so we will be done. Moreover, the assertion about the conformal structure follows from a result by Cheng and Yau~\cite[Corollary 1]{CY}. 

By Corollary~\ref{coro:scherk-preserved}, the surface $\Sigma^*$ is also an ideal Scherk graph over some domain $\Omega^*\subset\H^2(\kappa+4H^2)$ with $2n$ ideal vertexes for some $n\in\N$. In order to apply Lemma~\ref{lemma:area-estimate2} to $\Sigma^*$, we first observe that $\area(\Omega^*)$ is finite and $|Z|=\sqrt{(\tau^2+H^2)(x^2+y^2)}$ is bounded so $\int_{\Omega^*(R)}(1+|Z|)$ is bounded independently on $R$. Moreover, for $R$ sufficiently large, $\partial\Omega^*(R)$ consist of $2n$ geodesic segments as well as some arcs contained in $\partial D_R(0)$. On the one hand, the length of these arcs in $\partial D_R(0)$ can be easily shown to converge to zero when $R\to\infty$. On the other hand, each geodesic segment in $\partial\Omega^*(R)$ has length at most $2R$, the diameter of $D_R(0)$, because geodesics minimize length in $\H^2(\kappa+4H^2)$. Hence $\Length(\partial\Omega^*(R))$ grows linearly. As  $\kappa+4H^2<0,$  Lemma~\ref{lemma:geodesic-balls-psl} allows us to take $h(R)$ as a linear function. By applying Lemma~\ref{lemma:area-estimate2}, we can guarantee that $\area(\Sigma^*\cap B_R(0))$ grows at most quadratically.
\end{proof}

\subsection{Area of catenoids and $k$-noids in $\H^2(\kappa)\times\R$} 
As a last family of examples, we will study symmetric $k$-noids in $\H^2(\kappa)\times\R$ with constant mean curvature $H$ such that $4H^2+\kappa<0$. Minimal $k$-noids were constructed independently by Morabito and Rodr\'{i}guez~\cite{MorRod}, and Pyo~\cite{Pyo}, though in~\cite{MorRod} the non-symmetric case is also considered. We emphasize that horizontal catenoids are recovered as $2$-noids, and were first obtained by Daniel and Hauswirth~\cite{DanHau} for $H=\frac{1}{2}$ in $\H^2\times\R$, by means of a representation formula for the Gauss map of minimal surfaces in $\Nil(\frac{1}{2})$. For the rest of values of the mean curvature, symmetric $k$-noids were obtained by Plehnert~\cite{Plehnert}. They are complete embedded surfaces in $\H^2\times\R$ with genus zero and $k$ ends, which are asymptotic to vertical cylinders over curves of geodesic curvature $-2H$. We will prove that they have at most quadratic intrinsic area growth, illustrating how our techniques can be easily adapted to conjugate Plateau constructions.

The key idea in the construction is to realize that such a $k$-noid $\Sigma_k\subset\H^2(\kappa)\times\R$ can be decomposed in $4k$ pieces which are congruent by ambient isometries. By Daniel correspondence, each piece is isometric to a minimal graph $\Sigma^*$ in $\E(\kappa+4H^2,H)$,  which is obtained by solving an improper Plateau problem. The graph $\Sigma^*$ projects onto an ideal geodesic triangle $\Delta\subset\H^2(\kappa+4H^2)$ which has a vertex at infinity (so two of its sides have infinite length), and the other two vertexes having angles  $\frac{\pi}{2}$ and $\frac{\pi}{k}$ (see~\cite[Section 3.3]{Plehnert}). The surface $\Sigma^*$ is obtained by solving the Dirichlet problem with zero boundary values along the sides sharing the $\frac{\pi}{k}$-angle, and $+\infty$ limit value along the third side of $\Delta$. Hence it is clear that $\area(\Delta)<\infty$ and $\Length(\partial\Omega(R))$ grows linearly, so Lemma~\ref{lemma:area-estimate} yields that $\Sigma^*$ has at most quadratic extrinsic (and  intrinsic) area growth. Since sister surfaces are isometric and $\Sigma_k$ consists of $4k$ pieces isometric to $\Sigma^*$, we get the following result.

\begin{theorem}
Given $k\geq 2$, the $k$-noid $\Sigma_k\subset\H^2(\kappa)\times\R$ with constant mean curvature $H$ such that $4H^2+\kappa<0$, constructed in~\cite{Plehnert}, has at most quadratic intrinsic area growth. In particular, $\Sigma_k$ is parabolic, so it is conformally equivalent to $\mathbb{S}^2$ minus $k$ points.
\end{theorem}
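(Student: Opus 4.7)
The plan is to follow the conjugate Plateau construction of Plehnert~\cite{Plehnert}. The $k$-noid $\Sigma_k$ decomposes into $4k$ pieces, pairwise congruent by ambient isometries, each of which is the Daniel sister of a minimal graph $\Sigma^{*}\subset\E(\kappa+4H^{2},H)$ over an ideal geodesic triangle $\Delta\subset\H^{2}(\kappa+4H^{2})$ with vertex angles $0$, $\pi/2$, $\pi/k$. The two sides of $\Delta$ sharing the $\pi/k$ vertex carry zero Dirichlet data for the graph $\Sigma^{*}$, while the third side carries $+\infty$ boundary value. Since Daniel correspondence is an isometry between each fundamental piece of $\Sigma_k$ and $\Sigma^{*}$, it suffices to bound the intrinsic area growth of $\Sigma^{*}$, which is itself bounded above by the extrinsic one.

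I would then apply Lemma~\ref{lemma:area-estimate} to $\Sigma^{*}$ over $\Omega=\Delta$. The boundary decomposition is natural: $\Gamma(R)$ consists of the two sides where $u\equiv 0$ intersected with $D_R(0)$ (so $\int_{\Gamma(R)}|u|=0$), $\Lambda(R)$ is the intersection with $D_R(0)$ of the side where $u=+\infty$, and $\Theta(R)=\Delta\cap\partial D_R(0)$. Three facts close the argument: (i) $\Delta$ has finite area by Gauss-Bonnet for ideal triangles and $|Z|$ is bounded on $\Delta$, hence $\int_{\Omega(R)}(1+|Z|)$ is bounded independently of $R$; (ii) $\Length(\Theta(R)\cup\Lambda(R))=O(R)$, because each geodesic side of $\Delta$ meets $D_R(0)$ in an arc of length at most the diameter $2R$ (geodesics minimize length in $\H^{2}(\kappa+4H^{2})$), while $\Length(\Theta(R))\to 0$ since the two sides of $\Delta$ asymptotic to the ideal vertex approach one another exponentially; (iii) by Lemma~\ref{lemma:geodesic-balls-psl}, $h(R)$ may be taken linear in $R$. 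Consequently $h(R)\cdot\Length(\Theta(R)\cup\Lambda(R))=O(R^{2})$, and Lemma~\ref{lemma:area-estimate} gives $\area(\Sigma^{*}\cap B_R(0))=O(R^{2})$.

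Transferring the estimate back via the isometric correspondence and summing over the $4k$ fundamental pieces, $\Sigma_k$ inherits at most quadratic intrinsic area growth, so it is parabolic by Cheng-Yau~\cite[Corollary 1]{CY}. Since $\Sigma_k$ is by construction a complete Riemann surface of genus zero with exactly $k$ ends, and the conformal type of a parabolic Riemann surface of finite topological type is determined by its topology, $\Sigma_k$ is conformally $\mathbb{S}^{2}$ minus $k$ points. The main technical point that requires care is fact (ii): verifying that the unbounded side of $\Delta$ carrying the $+\infty$ data still contributes only $O(R)$ to $\Length(\Lambda(R))$, which ultimately rests on the length-minimizing property of geodesics in $\H^{2}(\kappa+4H^{2})$, and that $\Length(\Theta(R))$ stays bounded via the exponential convergence of asymptotic geodesics. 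Once these are in hand, the combination of Lemmas~\ref{lemma:area-estimate} and~\ref{lemma:geodesic-balls-psl} with Daniel correspondence and Cheng-Yau's criterion is routine.
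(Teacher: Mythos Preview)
Your proposal is correct and follows essentially the same route as the paper: decompose $\Sigma_k$ into $4k$ congruent pieces, pass via Daniel correspondence to the minimal graph $\Sigma^{*}$ over the ideal triangle $\Delta\subset\H^{2}(\kappa+4H^{2})$, and apply Lemma~\ref{lemma:area-estimate} using that $\area(\Delta)<\infty$, $|Z|$ is bounded, the geodesic sides contribute length $O(R)$, and $h(R)$ is linear by Lemma~\ref{lemma:geodesic-balls-psl}. Your treatment is in fact slightly more careful than the paper's own sketch, since you explicitly separate the roles of $\Gamma(R)$, $\Lambda(R)$, and $\Theta(R)$ and note that $\Length(\Theta(R))\to 0$ by exponential convergence of the asymptotic sides.
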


\section{Cylindrical area growth}
\label{sec:cylindrical-area}

Now we will introduce a different concept related to the area growth of a surface in $\E(\kappa,\tau)$. Given $x\in\M^2(\kappa)$ and $R>0$, the cylinder centered at $x_0$ of radius $R$ is the open subset $C_R(x_0)=\pi^{-1}(D_R(x_0))$. In analogy to Section \ref{sec:spherical-area}, we define the cylindrical area growth of $\Sigma$ as follows.
Given a positive increasing continuous function $f:\R\to\R^+$, the surface $\Sigma$ has {\em cylindrical area growth} of order at least $f(R)$ (resp. at most $f(R)$) if 
\begin{equation*}
\liminf_{R\to\infty}\frac{\area(\Sigma\cap C_R(x_0))}{f(R)}>0 \qquad\left(\text{resp. }\limsup_{R\to\infty}\frac{\area(\Sigma\cap C_R(x_0))}{f(R)}<\infty\right).
\end{equation*}
 When $f(R)$ is a polynomial of degree $k,$ we say that the cylindrical area growth is of order at least (resp. at most) $k$. This definition does not depend on the choice of $x_0\in\M^2(\kappa)$ either, and it is invariant under ambient isometries of $\E(\kappa,\tau)$ (note that any isometry $F$ of $\E(\kappa,\tau)$ satisfies $F(C_R(\pi(p)))=C_R(\pi(F(p))$ for all $p\in\E(\kappa,\tau)$ and $R>0$). The cylindrical area is appropriate to study the area growth of entire graphs in $\E(\kappa,\tau)$, as will shall see below.

In Example~\ref{ex:area-growth-horizontal}, it is proved that the horizontal umbrella $\Sigma_0$ given by $z=0$ has cubic cylindrical area growth in $\Nil(\tau)$, whereas it grows as $R\mapsto e^{R\sqrt{-\kappa}}$ in the case $\kappa<0$. This follows from the fact that $\Sigma_0\cap C_R(0)=\Sigma_0\cap B_R(0)$, so both cylindrical and extrinsic area growths coincide over $\Sigma$.  
Next result proves that the graphical surfaces which minimize area with free boundary over vertical cylinders are precisely horizontal umbrellas.

\begin{proposition}\label{prop:callibration}
Let $\Sigma\subset\E(\kappa,\tau)$ be the graph of a function $u:D_R(x_0)\subset\M^2(\kappa)\to\R$ which extends continuously to $\overline D_R(x_0)$, and let $\Sigma_0$ be the horizontal umbrella centered at $p_0$ such that $\pi(p_0)=x_0$. Then
\[\area(\Sigma)\geq\area(\Sigma_0\cap C_R(x_0)).\]
Equality holds if and only if $\Sigma=\Sigma_0\cap C_R(x_0)$, up to a vertical translation.
\end{proposition}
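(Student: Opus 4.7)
My plan is a calibration argument exploiting the foliation of $\E(\kappa,\tau)$ by horizontal umbrellas. After applying an ambient isometry I may assume $x_0=0$ and $p_0=(0,0,0)$, so that $\Sigma_0$ is the horizontal umbrella $\{z=0\}$ in the model. Since vertical translations are isometries, the family $\{z=c\}_{c\in\R}$ is a foliation of $\E(\kappa,\tau)$ by horizontal umbrellas, each of which is minimal. Letting $N$ denote the upward unit normal to this foliation, a direct computation in the orthonormal frame $\{E_1,E_2,E_3\}$ at $(x,y,0)$ and extension by vertical translations yields
\[
N(x,y,z)=\frac{-\tau y\,E_1+\tau x\,E_2+E_3}{\sqrt{1+\tau^2(x^2+y^2)}},
\]
and minimality of the leaves gives $\div(N)=0$. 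Hence the $2$-form $\omega=i_N\,dV$ is closed, with $\omega|_S=\langle N,\nu_S\rangle\,dA_S$ for any oriented surface $S$ with unit normal $\nu_S$.

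The key geometric fact is that $N$ is tangent to the vertical cylinder $\partial C_R(0)=\pi^{-1}(\partial D_R(0))$: the outward horizontal unit normal to this cylinder in $\E(\kappa,\tau)$ is proportional to $xE_1+yE_2$ (by rotational symmetry of $D_R(0)$ about the origin), and $\langle N,xE_1+yE_2\rangle=0$ by direct computation. I then apply Stokes' theorem on the region $V\subset C_R(0)$ bounded by $\Sigma$, $\Sigma_0\cap C_R(0)$ and a lateral piece $L\subset\partial C_R(0)$ joining their boundaries. Since $d\omega=0$ and the lateral contribution vanishes by the tangency above, Stokes reduces to $\int_\Sigma\omega=\area(\Sigma_0\cap C_R(0))$. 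Combined with the pointwise Cauchy-Schwarz estimate $\omega|_\Sigma=\langle N,\nu\rangle\,dA_\Sigma\leq dA_\Sigma$, I obtain
\[
\area(\Sigma_0\cap C_R(0))=\int_\Sigma\omega\leq\area(\Sigma).
\]

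For the equality case, equality in Cauchy-Schwarz forces $\nu=N$ pointwise on $\Sigma$, so $\Sigma$ is tangent at every point to a leaf of the foliation. Since $\Sigma$ is a graph over the connected domain $D_R(0)$, this gives $\Sigma\subset\{z=c\}$ for some constant $c$, so $u\equiv c$ and $\Sigma$ is a vertical translate of $\Sigma_0\cap C_R(0)$. The main technical obstacle I expect is the careful bookkeeping of orientations in Stokes' theorem when $u$ changes sign on $D_R(0)$: one decomposes $D_R(0)$ into $\{u>0\}$ and $\{u<0\}$ and applies Stokes separately on the two regions above and below $\Sigma_0$; the signs of the downward outer normals from each piece reverse simultaneously on the $\Sigma_0$-side and on the $\Sigma$-side, so both partial identities add to the clean equality displayed above.
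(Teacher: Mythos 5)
Your proposal is correct and follows essentially the same calibration argument as the paper: the divergence-free unit normal $N_0$ to the foliation by horizontal umbrellas, the divergence/Stokes theorem on the region between $\Sigma$ and $\Sigma_0\cap C_R(x_0)$ using tangency of $N_0$ to the lateral cylinder, Cauchy--Schwarz, and rigidity in the equality case. The only cosmetic differences are that you phrase the calibration via the closed $2$-form $i_{N_0}\df V$ and handle a sign-changing $u$ by splitting $D_R(x_0)$ into $\{u>0\}$ and $\{u<0\}$, whereas the paper simply translates $\Sigma_0$ vertically so that $u>0$ throughout.
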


\begin{proof}
By applying an appropriate isometry of $\E(\kappa,\tau)$ and choosing $\Sigma_0$ up to a vertical translation, we may assume that $x_0=0,$  $u>0$ in $D_R(0)$, and also that $\Sigma_0$ is given by the equation $z=0$. The upward-pointing unit normal vector field to $\Sigma_0$ is
\[N_0=\frac{-\tau y E_1+\tau x E_2+E_3}{\sqrt{1+\tau^2(x^2+y^2)}}.\]
This expression extends $N_0$ to a global unit vector field in $\E(\kappa,\tau)$ with zero divergence (it is the unit normal to a foliation of $\E(\kappa,\tau)$ by minimal surfaces). Now, let $U\subset\E(\kappa,\tau)$ be the bounded region with boundary $\Sigma_0\cap C_R(0)$, $\Sigma$, and the cylinder $\partial C_R(0)$, and apply the divergence theorem to $N_0$ in $U$. Since $N_0$ is orthogonal to $\Sigma_0$ and tangent to $\partial C_R(0)$, we get
\[\int_\Sigma\langle N_0,N\rangle=\area\left(\Sigma_0\cap C_R(0)\right),\]
where $N$ denotes the upward-pointing unit normal vector field to $\Sigma$. Since $\langle N_0,N\rangle\leq 1,$ Cauchy-Schwarz inequality yields that $\area(\Sigma)\geq\int_\Sigma\langle N_0,N\rangle$, and we are done. If equality holds, then $\langle N_0,N\rangle=1$, so $N=N_0$ and $\Sigma$ differs from $\Sigma_0$ by a vertical translation.
\end{proof}

Combining Proposition \ref{prop:callibration} with the explicit cylindrical area growth of horizontal umbrellas, we get a global estimate for entire graphs.

\begin{corollary}\label{coro:cylindrical-growth}
Let $\Sigma\subset\E(\kappa,\tau)$ be an entire graph. 
\begin{itemize}
 \item[(a)] If $\E(\kappa,\tau)=\Nil(\tau)$, then $\Sigma$ has at least cubic cylindrical area growth.
 \item[(b)] If $\kappa< 0$, then $\Sigma$ has at least cylindrical area growth of order $R\mapsto e^{\sqrt{-\kappa}R}$.
\end{itemize} 
\end{corollary}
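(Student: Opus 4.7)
The plan is to combine Proposition~\ref{prop:callibration} with the umbrella area computations recorded in Example~\ref{ex:area-growth-horizontal}. Fix $x_0 \in \M^2(\kappa)$, choose any $p_0 \in \pi^{-1}(x_0)$, and let $\Sigma_0$ denote the horizontal umbrella through $p_0$. Because $\Sigma$ is an \emph{entire} graph, its defining function is continuous on the closed disk $\overline{D_R(x_0)}$ for every $R>0$, so the hypothesis of Proposition~\ref{prop:callibration} is verified by the portion of $\Sigma$ lying over this disk. Applying that proposition yields the key comparison
\[
\area(\Sigma\cap C_R(x_0)) \geq \area(\Sigma_0 \cap C_R(x_0)) \qquad\text{for every } R > 0,
\]
so the cylindrical area growth of any entire graph is bounded below by that of a single horizontal umbrella.

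From here it suffices to plug in the explicit umbrella asymptotics. As observed in Example~\ref{ex:area-growth-horizontal}, horizontal geodesics are minimizing, so $\Sigma_0 \cap C_R(x_0) = \Sigma_0 \cap B_R(x_0)$ and the computations there apply verbatim. In $\Nil(\tau)$ one gets
\[
\area(\Sigma_0 \cap C_R(x_0)) = \tfrac{2\pi}{3\tau^2}\bigl((1+\tau^2R^2)^{3/2}-1\bigr) = \tfrac{2\pi\tau}{3}R^3 + O(R),
\]
which gives case (a). When $\kappa<0$ the example yields
\[
\area(\Sigma_0 \cap C_R(x_0)) = \pi\,\tfrac{\sqrt{4\tau^2-\kappa}}{-\kappa\sqrt{-\kappa}}\,e^{\sqrt{-\kappa}R} + O(R),
\]
which gives case (b). In both situations the comparison above forces $\liminf_{R\to\infty}\area(\Sigma\cap C_R(x_0))/f(R) > 0$ with $f(R)=R^3$ and $f(R)=e^{\sqrt{-\kappa}R}$ respectively, which is exactly what the corollary claims.

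Since Proposition~\ref{prop:callibration} does the real work (a divergence-theorem calibration against the minimal foliation by horizontal umbrellas, with $N_0$ as a unit, divergence-free vector field tangent to the cylinder), and Example~\ref{ex:area-growth-horizontal} already records the needed asymptotics, there is no serious obstacle to overcome. The only point worth flagging is the freedom to center the comparison umbrella over the prescribed $x_0$, which is granted by the formulation of Proposition~\ref{prop:callibration} itself; the fact that different choices of $p_0 \in \pi^{-1}(x_0)$ merely translate $\Sigma_0$ vertically is immaterial since the area over $D_R(x_0)$ is invariant under such translations.
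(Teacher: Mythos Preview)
Your proof is correct and follows exactly the approach the paper indicates: combine Proposition~\ref{prop:callibration} with the umbrella area asymptotics from Section~\ref{ex:area-growth-horizontal}. The only cosmetic slip is writing $B_R(x_0)$ where you mean $B_R(p_0)$ (the extrinsic ball is centered at a point of $\E(\kappa,\tau)$, not of $\M^2(\kappa)$), but the argument is sound.
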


We observe that the cylindrical area growth and the extrinsic area growth of entire minimal graphs in $\Nil(\tau)$ admit estimates, from below and from above, respectively, of order 3. If we could guarantee that they coincide for some entire minimal graph $\Sigma$, then we would be able to prove that $\Sigma$ has exactly cubic extrinsic area growth. Next result shows that this is the case when we assume a restriction on the growth of the height of the entire graph.

\begin{corollary}\label{coro:height-area-estimate-nil}
Let $\Sigma\subset\Nil(\tau)$ be an entire minimal graph given by a function $u\in\mathcal{C}^\infty(\R^2)$, and assume that there exist constants $M>0$ and $\beta\geq 1$ such that $|u|\leq M(1+r^2)^\beta$, being $r$ the distance to the origin in $\R^2$. Then the extrinsic area growth of $\Sigma$ has  order at least $\frac{3}{\beta}$.

In particular, if $\beta=1$, then $\Sigma$ has exactly cubic extrinsic area growth.
\end{corollary}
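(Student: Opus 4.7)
The plan is to combine the cylindrical area lower bound of Corollary~\ref{coro:cylindrical-growth} with the height hypothesis on $u$, using Lemma~\ref{lemma:equivalence-distances} to translate the cylinder $C_R(0)$ into a genuine extrinsic ball. The height bound $|u|\leq M(1+r^2)^\beta$ means that the portion of $\Sigma$ over the disk $D_R(0)\subset\R^2$ is confined to the Euclidean slab $D_R(0)\times]-MR'^{\,2\beta},MR'^{\,2\beta}[$ for $R'=(1+R^2)^{1/2}$, hence, up to absorbing constants, to a set of the form $D_R(0)\times]-M'R^{2\beta},M'R^{2\beta}[$ for $R$ large.

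First, I would fix $\alpha=\sqrt{M'}$ and observe that, by the definition of $\delta_\alpha$ in~\eqref{eqn:distance-delta}, any point $p=(x,y,z)$ in the portion of $\Sigma$ over $D_R(0)$ satisfies
\[
\delta_\alpha(p)=\max\Bigl\{\sqrt{x^2+y^2},\,\tfrac{1}{\alpha}\sqrt{|z|}\,\Bigr\}\leq\max\bigl\{R,R^\beta\bigr\}=R^\beta,
\]
for $R\geq 1$, since $\beta\geq 1$. Lemma~\ref{lemma:equivalence-distances} then gives a constant $K>0$, independent of $p$, such that $d(p)\leq KR^\beta$ as soon as $d(p)>\frac{\pi}{2\tau}$, i.e.\ for $R$ sufficiently large. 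Consequently, for some constant $C>0$ and all large $R$,
\[
\Sigma\cap C_R(0)\ \subset\ \Sigma\cap B_{CR^\beta}(0).
\]

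Next I would use Corollary~\ref{coro:cylindrical-growth}, which guarantees that an entire graph in $\Nil(\tau)$ has at least cubic cylindrical area growth, so there is a constant $c>0$ with
\[
\area(\Sigma\cap B_{CR^\beta}(0))\ \geq\ \area(\Sigma\cap C_R(0))\ \geq\ cR^3,
\]
for all $R$ large enough. Setting $\rho=CR^\beta$ and rewriting in terms of $\rho$ yields $\area(\Sigma\cap B_\rho(0))\geq c'\rho^{3/\beta}$, which is the claimed lower bound on the extrinsic area growth.

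For the final assertion, when $\beta=1$ this argument yields at least cubic extrinsic area growth. The matching upper bound is obtained by applying Theorem~\ref{thm:area-growth-minimal} in case (b): since $\Sigma$ is entire, $\Omega=\R^2$, so $\partial\Omega(R)=\partial D_R(0)$ and condition (ii) holds trivially with $K=1$. The principal point to double-check is the application of Lemma~\ref{lemma:equivalence-distances}, which only holds outside the small ball of radius $\frac{\pi}{2\tau}$; this is harmless since we take $R\to\infty$ and the asymptotic order is insensitive to a bounded set.
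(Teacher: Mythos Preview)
Your proof is correct and follows essentially the same strategy as the paper: both arguments use the height bound together with Lemma~\ref{lemma:equivalence-distances} to trap $\Sigma\cap C_R(0)$ inside an extrinsic ball of radius comparable to $R^\beta$, then invoke Corollary~\ref{coro:cylindrical-growth} and Theorem~\ref{thm:area-growth-minimal}; the only cosmetic difference is that the paper parametrizes by the ball radius and shrinks to a cylinder of radius $R^{1/\beta}$, whereas you parametrize by the cylinder radius and substitute $\rho=CR^\beta$ at the end. Regarding your closing caveat, note that the Remark following Lemma~\ref{lemma:equivalence-distances} states that the inequality $m\,d(p)\leq\delta_\alpha(p)$ in fact holds for \emph{all} $p\in\Nil(\tau)$, so no special treatment of the small ball is needed.
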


\begin{proof} 
Let us consider the distance $\delta_\alpha$ defined by~\eqref{eqn:distance-delta} for some $\alpha>\sqrt{M}$. The equivalence between $d$ and $\delta_\alpha$ established in Lemma~\ref{lemma:equivalence-distances} implies that the growth of the function $R\mapsto\area(\Sigma\cap B_R(0))$ is asymptotically the same as the growth of $R\mapsto\area(\Sigma\cap(D_R(0)\times]-\alpha^2R^2,\alpha^2R^2[))$ (we recall that $D_R(0)\times\,]-\alpha^2R^2,\alpha^2R^2[$ is the ball of radius $R$ for $\delta_\alpha$). Since $\beta\geq 1$, we get that
\[\area(\Sigma\cap(D_R(0)\times]-\alpha^2R^2,\alpha^2R^2[))\geq \area(\Sigma\cap(D_{R^{1/\beta}}(0)\times]-\alpha^2R^2,\alpha^2R^2[)).\]
Now we observe that, by hypothesis, $|u|\leq M(1+R^{2/\beta})^\beta$ on $D_{R^{1/\beta}}(0)$, and $M(1+R^{2/\beta})^\beta\leq\alpha^2R^2$ for $R$ sufficiently large. It implies that $\Sigma\cap(D_{R^{1/\beta}}(0)\times]-\alpha^2R^2,\alpha^2R^2[)=\Sigma\cap C_{R^{1/\beta}}(0)$ for $R$ sufficiently large, so $\area(\Sigma\cap C_{R^{1/\beta}}(0))$ grows at least as the function $R\mapsto(R^{1/\beta})^3=R^{3/\beta}$ by Corollary \ref{coro:cylindrical-growth}, so we get the desired estimate.

If $\beta=1$, this estimate implies that $\Sigma$ has at least cubic extrinsic area growth, and we conclude by Theorem~\ref{thm:area-growth-minimal} that the extrinsic area growth is exactly cubic.
\end{proof}

Corollary~\ref{coro:height-area-estimate-nil} gives a relation between the area growth and height  of the graph. In this direction we can prove the following gradient, height and area estimates.

\begin{theorem}\label{last-area-estimate}
Let $\Sigma\subset\Nil(\tau)$ be an entire minimal graph, given by a function $u\in\mathcal{C}^\infty(\R^2)$, and consider $r=\sqrt{x^2+y^2}$. Then
\begin{itemize}
 \item[(a)] there exists a constant $B>0$ such that $|Gu|\leq B(1+r^2)$,
 \item[(b)] there exists a constant $C>0$ such that $|u|\leq C(1+r^2)^{3/2}$.
\end{itemize}
In particular, the extrinsic area growth of $\Sigma$ is at least quadratic, and at most cubic, while the cylindrical area growth of $\Sigma$ is at least cubic, and at most quartic.
\end{theorem}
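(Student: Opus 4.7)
The plan is to follow the roadmap announced after Corollary~\ref{coro:height-area-estimate-nil}: first establish the gradient estimate (a), then integrate it to obtain the height bound (b), and finally combine both with the results of the previous sections to read off the four area growth statements.

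For part (a), I would pass to $\LL^3$ via the Calabi-type correspondence of Lee~\cite{Lee}, which associates to the entire minimal graph $\Sigma\subset\Nil(\tau)$ an entire spacelike graph $\Sigma^*\subset\LL^3$ with constant mean curvature $\tau$. To $\Sigma^*$ one applies the global gradient estimate for entire spacelike CMC graphs in $\LL^3$ recorded earlier as Lemma~\ref{lemma:gradient-estimate-L3}, which in turn stems from the techniques of Cheng-Yau~\cite{CY2} and Treibergs~\cite{Treibergs}. Translating this estimate back through Lee's correspondence, which reads off $|Gu|$ in terms of the gradient of the Calabi dual, should produce a constant $B>0$ such that $|Gu|\leq B(1+r^2)$ throughout $\R^2$.

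Part (b) then follows by a direct radial integration. Decomposing $Gu=\nabla u+Z$ and using $|Z|=\tau r$, the estimate (a) gives $|\nabla u|\leq B(1+r^2)+\tau r$, and integration along the Euclidean segment joining the origin to an arbitrary point $(x,y)\in\R^2$ of distance $r$ yields
\[
|u(x,y)|\leq |u(0,0)|+Br+\frac{\tau}{2}r^2+\frac{B}{3}r^3,
\]
which is bounded by $C(1+r^2)^{3/2}$ for a sufficiently large $C>0$.

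Finally, the four area statements are immediate consequences of what we have at hand. The extrinsic upper bound of cubic order is Theorem~\ref{thm:area-growth-minimal}(b) applied to $\Omega=\R^2$, for which hypothesis (ii) is trivial with $K=1$; the extrinsic lower bound of quadratic order is Corollary~\ref{coro:height-area-estimate-nil} applied with $\beta=3/2$, giving order at least $R^{3/\beta}=R^2$; the cylindrical lower bound of cubic order is Corollary~\ref{coro:cylindrical-growth}(a); and the cylindrical upper bound of quartic order follows from (a) by integrating the area element $W=\sqrt{1+|Gu|^2}\leq 1+B(1+r^2)$ over $D_R(0)$. The principal obstacle is (a): it forces us to leave the intrinsic framework of $\Nil(\tau)$ and enter the theory of entire spacelike CMC hypersurfaces in Lorentz-Minkowski space, and the quadratic growth of the bound on $|Gu|$ is essentially sharp because the Abresch-Rosenberg examples in point (\ref{graph-examples}) of Section~\ref{minimal-graph-section} already exhibit non-trivial growth; everything else (integration, area corollaries) is elementary once (a) is in place.
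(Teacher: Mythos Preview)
Your proposal is correct and matches the paper's own proof essentially step for step: Lee's correspondence combined with Lemma~\ref{lemma:gradient-estimate-L3} for (a), the triangle inequality $|\nabla u|\leq|Gu|+|Z|$ followed by radial integration for (b), and the same four references (Theorem~\ref{thm:area-growth-minimal}, Corollary~\ref{coro:height-area-estimate-nil} with $\beta=\tfrac{3}{2}$, Corollary~\ref{coro:cylindrical-growth}, and integration of $W$ over $D_R(0)$) for the area statements. The only cosmetic difference is that you bound $W\leq 1+B(1+r^2)$ before integrating, whereas the paper integrates $\sqrt{1+B^2(1+r^2)^2}$ directly; both give the quartic cylindrical upper bound.
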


The proof of Theorem~\ref{last-area-estimate} will rely on the following gradient estimate for entire spacelike graphs in Lorentz-Minkowski $3$-space $\LL^3$ with constant positive mean curvature, via the Calabi-type correspondence by Lee~\cite[Corollary 2]{Lee}.

\begin{lemma}\label{lemma:gradient-estimate-L3}
Let $\Sigma\subset\LL^3$ be an entire spacelike graph with constant mean curvature $H>0$, given by a global parametrization $(x,y)\mapsto (x,y,v(x,y))$ for a certain function $v\in\mathcal{C}^\infty(\R^2)$. Then there exists a constant $A>0$ such that
\[|\nabla v|^2\leq 1-\frac{A}{(1+r^2)^2},\]
being $r$ the distance to the origin in $\R^2$ and $\nabla v$ the usual gradient of $v$ in $\R^2$.
\end{lemma}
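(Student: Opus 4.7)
The desired inequality $|\nabla v|^2 \le 1 - A(1+r^2)^{-2}$ is equivalent to a global quadratic bound on the angle function $W := (1-|\nabla v|^2)^{-1/2}$ of the spacelike graph, namely $W \le C(1+r^2)$. My plan is to obtain this bound by combining the Treibergs classification of entire spacelike constant mean curvature graphs in $\LL^3$ (\cite{Treibergs}) with the Cheng--Yau interior gradient estimate for such graphs (\cite{CY2}).

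First I would note the automatic light-cone trapping that comes from the spacelike condition: since $|\nabla v|<1$ pointwise, integrating along straight segments gives the $1$-Lipschitz estimate $|v(x,y)-v(0)|\le r$, so in particular $\operatorname{osc}_{B_R(p_0)}v \le 2R$ for every Euclidean ball $B_R(p_0)\subset\R^2$. Treibergs' theorem refines this trapping by showing that an entire spacelike CMC graph with $H>0$ in $\LL^3$ is asymptotic to a future light cone, i.e., $v(x)=r+O(1)$ as $r\to\infty$, after reversing time orientation if necessary. This provides the sharp ambient geometric framework within which $W$ is to be controlled.

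Next I would invoke the Cheng--Yau interior gradient estimate. On a spacelike CMC surface the angle function satisfies a Simons-type identity $\Delta_\Sigma W = |A|^2 W$, up to a lower order term depending only on $H$, which combined with the maximum principle applied to a suitably chosen auxiliary function on $\Sigma$ yields a local bound of the form $W(p_0)\le F(H,R,\operatorname{osc}_{B_R(p_0)}v)$ for every ball $B_R(p_0)\subset\R^2$ in the domain, with $F$ an explicit polynomial. Treibergs' barrier refinement then compares $\Sigma$ at each $p_0$ with translates of the standard rotationally invariant hyperboloid $v_H(x)=H^{-1}\sqrt{1+H^2|x-p_0|^2}$, the explicit CMC-$H$ graph in $\LL^3$, to upgrade the local estimate into the global polynomial bound $W(p_0)\le C(H)(1+r(p_0)^2)$. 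Squaring and rearranging yields the stated inequality with $A=C(H)^{-2}$.

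The hard part is pinning down the precise quadratic-in-$r^2$ exponent for $W$. A naive application of Cheng--Yau's estimate, plugged with the linear oscillation bound on $v$ from the spacelike condition, gives a uniform-in-$p_0$ bound which is actually false in general, since the Treibergs examples can exhibit unbounded $W$; the Cheng--Yau argument must therefore be run on balls whose radius is carefully coupled to $r(p_0)$, and one must use Treibergs' barrier comparison with tangent hyperboloids at $p_0$ to capture the exact polynomial rate. This is delicate because Treibergs' entire CMC graphs are parametrized by lower semicontinuous data on the sphere at infinity and can be highly non-symmetric, so the comparison hyperboloids must be chosen to accommodate the most irregular asymptotic profiles on the light cone.
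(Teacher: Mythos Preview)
Your proposal identifies the right sources (Cheng--Yau \cite{CY2} and Treibergs \cite{Treibergs}) but does not actually carry out a proof. You correctly observe that the statement is equivalent to bounding $W=(1-|\nabla v|^2)^{-1/2}$ by $C(1+r^2)$, and you yourself flag that ``the hard part is pinning down the precise quadratic-in-$r^2$ exponent'' and that a naive application of the interior gradient estimate on $W$ fails. The barrier comparison with translated hyperboloids you allude to is never made precise, and it is unclear how it would produce exactly the quadratic rate; what you have written is a strategy together with its main obstacle, not a proof. (Incidentally, the claim $v(x)=r+O(1)$ is not what Treibergs proves in general; you retract it implicitly in your last paragraph.)

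The paper's argument is different and sidesteps this difficulty. Instead of estimating $W$ through a Simons-type identity, it applies the Cheng--Yau gradient estimate to the \emph{Lorentzian support function} $\Phi(x,y,z)=x^2+y^2-z^2$ restricted to $\Sigma$, namely $|\nabla^\Sigma\Phi|^2\le C(1+\Phi)^2$ (\cite[Theorem~1]{CY2}, \cite[Proposition~2]{Treibergs}). A direct computation gives
\[
|\nabla^\Sigma\Phi|^2\ \ge\ \langle\overline\nabla\Phi,N\rangle^2\ =\ \frac{4(v-xv_x-yv_y)^2}{1-|\nabla v|^2},
\]
so the quadratic bound on $W$ follows \emph{immediately} once one knows that the affine quantity $w=v-xv_x-yv_y$ (the $z$-intercept of the tangent plane at $(x,y)$) is bounded away from zero outside a compact set. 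This is where Treibergs is used, but only through the convexity of $v$: after normalizing so that the origin lies on $\Sigma$ and no straight line through the origin is contained in $\Sigma$ (the ruled case being the hyperbolic cylinder, handled separately), convexity makes $w$ monotone along rays from the origin, so $|w|\ge M>0$ on $\{r\ge1\}$ by compactness of the unit circle. No coupling of scales and no analysis of Treibergs' boundary data at infinity is needed.
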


\begin{proof}
First we can assume that $\Sigma$ is not a ruled surface, and apply a translation in $\LL^3$ such that the origin belongs to $\Sigma$, and no straight line through the origin is contained in $\Sigma$. Notice that this normalization does not affect the estimate we are looking for. Note also that, if $\Sigma$ were a ruled surface, then the classification of ruled constant mean curvature surfaces in $\LL^3$ given in~\cite{DVVW} implies that $\Sigma$ is either minimal, or a circular cylinder, or a hyperbolic cylinder or an isoparametric surface with null curves. Since we are dealing with $H>0$ and the surface is spacelike, we conclude that $\Sigma$ is a hyperbolic cylinder, i.e., up to an isometry of $\LL^3$, we can suppose that $v(x,y)=\frac{1}{2H}\sqrt{1+4H^2x^2}$, so $|\nabla v|^2=1-(1+4H^2x^2)^{-1}$ and the statement follows. Moreover, it follows from the work of Treibergs~\cite{Treibergs} that $\Sigma$ is the boundary a convex set in $\LL^3$ (see also~\cite{ACL}) so, up to a mirror reflection with respect to $z=0$, we may also assume that $v$ is a convex function.

Following the arguments in~\cite{CY2} (see also~\cite{Treibergs}), the surface $\Sigma$ is complete with respect to its induced Riemannian metric, and the Lorentzian support function $\Phi:\LL^3\to\R$ given by $\Phi(x,y,z)=x^2+y^2-z^2$ is a proper function on $\Sigma$ satisfying the gradient estimate $|\nabla^\Sigma\Phi|^2\leq C(1+\Phi)^2$, where the gradient is computed on $\Sigma$. We will develop this inequality to get our result.

By taking into account that $\nabla^\Sigma\Phi=\overline\nabla\Phi+\langle\overline\nabla\Phi,N\rangle N$, where $\overline\nabla\Phi=2x\partial_x+2y\partial_y+2z\partial_z$ denotes the gradient of $\Phi$ in the ambient space $\LL^3$, and $N=(1-v_x^2-v_y^2)^{-1/2}(v_x\partial_x+v_y\partial_y+\partial_z)$ is a unit normal to $\Sigma$, we reach the following expression:
\begin{equation}\label{eqn:L3-estimate1}
|\nabla^\Sigma\Phi|^2=|\overline\nabla\Phi|^2+\langle\overline\nabla\Phi,N\rangle^2\geq \langle\overline\nabla\Phi,N\rangle^2=\frac{4(v-xv_x-yv_y)^2}{1-v_x^2-v_y^2}.
\end{equation}
Next we estimate the numerator in the RHS of~\eqref{eqn:L3-estimate1}, for what we observe that the intersection of the $z$-axis and the tangent line to $\Sigma$ at $(x,y,v)$ in the direction of the tangent vector $(x,y,xv_x+yv_y)$ is precisely the point $(0,0,v-xv_x-yv_y)$. Using this and the fact that $\Sigma$ is convex and does not contain a line through the origin, we get that
\begin{equation}\label{eqn:L3-estimate2}
w(x,y)\leq w\left(\frac{x}{\sqrt{x^2+y^2}},\frac{y}{\sqrt{x^2+y^2}}\right)<0,\qquad\quad \text{if }x^2+y^2\geq 1,
\end{equation}
where $w=v-xv_x-yv_y$. Since $w$ is continuous and the unit circle is compact, equation~\eqref{eqn:L3-estimate2} implies that there exists a constant $M>0$ such that $w^2=(v-xv_x-yv_y)^2\geq M$ provided that $x^2+y^2\geq 1$. Hence the gradient estimate for the support function by Cheng and Yau (see\cite[Theorem 1]{CY2} and \cite[Proposition 2]{Treibergs}) yields
\[\frac{4M^2}{1-|\nabla v|^2}\leq|\nabla^\Sigma\Phi|^2\leq C(1+\Phi)^2=C(1+x^2+y^2-v^2)^2\leq C(1+x^2+y^2)^2.\]
Equivalently, $|\nabla v|^2\leq 1-\frac{4M^2}{C}(1+r^2)^{-2}$. Though this inequality is valid for $r\geq 1$, it trivially extends for all $r\geq 0$ by possibly changing the constant $M$, so the statement follows.
\end{proof}

\begin{proof}[Proof of Theorem \ref{last-area-estimate}]
By taking into account the Calabi-type correspondence in~\cite{Lee} we can associate to $\Sigma$ a function $v\in\mathcal{C}^\infty(\R^2)$ such that $(x,y)\mapsto(x,y,v(x,y))$ defines an entire spacelike graph in $\LL^3$ with constant mean curvature $\tau$, and $v$ satisfies the relation $(1-|\nabla v|^2)(1+|Gu|^2)=1$. From Lemma~\ref{lemma:gradient-estimate-L3} we get that there exists $A>0$ such that $1+|Gu|^2=(1-|\nabla v|^2)^{-1}\leq A^{-1}(1+r^2)^2<1+A^{-1}(1+r^2)^2$, and we get item (a) by just taking $B=A^{-1/2}$.

Applying the Minkowski inequality to the expression $\nabla u=Gu-Z$, where $Z=-\tau y\partial_x+\tau x\partial_y$, we get that  $|\nabla u|\leq |Gu|+|Z|\leq B(1+r^2)+\tau r$. Hence $|\nabla u|$ grows at most quadratically in $r$, from where it is easy to see that there exists a constant $C>0$ satisfying item (b). 

The assertion about extrinsic area growth in the statement is a consequence of Corollary~\ref{coro:height-area-estimate-nil} ($\beta=\frac{3}{2}$) and Theorem~\ref{thm:area-growth-minimal}. Finally, the assertion about cylindrical area growth follows from Corollary~\ref{coro:cylindrical-growth} and a simple integration in polar coordinates using item (b):
\[\area(\Sigma\cap C_R(0))=\int_{D_R(0)}\sqrt{1+\|Gu\|^2}\leq 2\pi\int_0^Rr\sqrt{1+B^2(1+r^2)^2}\df r\leq DR^4,\]
for $r$ big enough and some constant $D>0$.
\end{proof}

As mentioned above, our estimates of the gradient give estimates on the angle function. This allows us to improve a result by Espinar~\cite[Corollary~5.2]{Espinar} on complete stable surfaces with constant mean curvature. Here, stability is understood in a strong sense (i.e., a constant mean curvature $H$ surface is said stable if it is a stable critical point of the functional $\mathcal{J}=\area-2H\,\vol$, for all normal variations of $\Sigma$ with compact support, see also~\cite{MPR}).

\begin{corollary}\label{coro:estability}
Let $\Sigma$ be an orientable complete stable surface with constant mean curvature $H$ immersed in $\E(\kappa,\tau)$, with $\tau\neq 0$ and $4H^2+\kappa\geq 0$. If $\nu^2\in L^1(\Sigma)$, then $\kappa+4H^2=0$ and $\Sigma$ is a vertical cylinder over a complete curve in $\M^2(\kappa)$ with constant geodesic curvature $2H$.
\end{corollary}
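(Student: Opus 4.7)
The strategy is to use Daniel's correspondence to transfer the problem to stable minimal surfaces, and then apply the sharper gradient estimate from Theorem~\ref{last-area-estimate} to rule out all cases except vertical cylinders with $\kappa+4H^2=0$.

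First, I would invoke Daniel's correspondence: the hypotheses $\tau\neq 0$ and $4H^2+\kappa\geq 0$ yield a sister $\Sigma^*$, complete and minimal in $\E(\kappa^*,\tau^*)$ where $\kappa^*=\kappa+4H^2\geq 0$ and $\tau^*=\sqrt{H^2+\tau^2}>0$, isometric to $\Sigma$ and with the same angle function (so $\nu^2\in L^1(\Sigma^*)$). Using the Gauss equation together with the explicit Ricci formula for $\E(\kappa,\tau)$ one checks that $|A^*|^2=|A|^2-2H^2$ and $\mathrm{Ric}^*(N^*,N^*)=\mathrm{Ric}(N,N)+2H^2$, so the two Jacobi operators coincide and $\Sigma^*$ is stable.

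I would then split into two cases. If $\nu\equiv 0$, then $\Sigma^*$ is a vertical cylinder over a complete geodesic in $\M^2(\kappa^*)$, and reversing Daniel, $\Sigma$ is also a vertical cylinder, over a complete curve $\gamma\subset\M^2(\kappa)$ of constant geodesic curvature $2H$. A direct computation on such a cylinder yields $|A|^2=4H^2+2\tau^2$ and $\mathrm{Ric}(N,N)=\kappa-2\tau^2$, so the stability operator reduces to $L=\Delta+(4H^2+\kappa)$ on the intrinsically flat surface $\gamma\times\R$. Since the bottom of the spectrum of $-\Delta$ on its universal cover $\R^2$ vanishes, stability forces $4H^2+\kappa\leq 0$, and combined with the hypothesis $4H^2+\kappa\geq 0$ this yields $4H^2+\kappa=0$.

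If instead $\nu\not\equiv 0$, the aim is to reach a contradiction. The key observation is that $L\nu=0$ together with $\nu\in L^2(\Sigma^*)$ and the stability of $\Sigma^*$ imply, via the standard uniqueness and positivity of the ground state of a non-negative Schr\"odinger-type operator, that $\nu$ has constant sign; so we may assume $\nu>0$ and $\Sigma^*$ is a Killing graph. The subcase $\kappa^*>0$ is excluded topologically: the ambient would be a compact Berger sphere and $\Sigma^*$ a global section of the non-trivial Hopf-like bundle $\pi\colon\E(\kappa^*,\tau^*)\to\mathbb{S}^2(\kappa^*)$, which does not exist. Hence $\kappa^*=0$, and $\Sigma^*\subset\Nil(\tau^*)$ is a complete, therefore entire~\cite{DanHau}, minimal graph. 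By Theorem~\ref{last-area-estimate}(a), $|Gu|\leq B(1+r^2)$, and hence $\nu=(1+|Gu|^2)^{-1/2}\geq C(1+r^2)^{-1}$. Computing the $L^2$-integral in the base,
\[\int_{\Sigma^*}\nu^2\,dA=\int_{\R^2}\nu\,dx\,dy\geq C\int_0^\infty\frac{r\,dr}{1+r^2}=\infty,\]
contradicting $\nu^2\in L^1(\Sigma^*)$. The main obstacle is the spectral argument that $\nu$ has constant sign; this is precisely what makes the $L^2$-hypothesis sufficient when combined with our sharper gradient estimate, whereas the weaker angle-function bounds previously available would have required stronger integrability.
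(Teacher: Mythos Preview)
Your final computation is exactly the paper's: once you know $\Sigma^*$ is an entire minimal graph in $\Nil(\tau^*)$, the gradient bound of Theorem~\ref{last-area-estimate}(a) gives $\nu\geq C(1+r^2)^{-1}$ and hence $\int_{\Sigma^*}\nu^2=\int_{\R^2}\nu\,dx\,dy=\infty$. The difference lies in how you reach that dichotomy. The paper simply invokes~\cite[Corollary~5.1]{Espinar}, which already yields $\kappa\leq 0$, $4H^2+\kappa=0$, and that $\Sigma$ is either a vertical cylinder or an entire graph; it then passes to $\Nil$ via Daniel (stability being preserved by~\cite[Proposition~5.12]{Dan}) and runs the integral. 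You instead rebuild Espinar's corollary by hand: a spectral argument to force $\nu$ of constant sign, a topological obstruction in the Berger-sphere case $\kappa^*>0$, and a direct stability computation on cylinders to pin down $4H^2+\kappa=0$.

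Your reconstruction is essentially sound, but two steps deserve more than a sentence. First, the claim that an $L^2$ Jacobi field on a stable surface has constant sign (``uniqueness and positivity of the ground state'') is true but not entirely routine on a noncompact manifold: you need that $\nu\in L^2$ with $L\nu=0$ really places $\nu$ at the bottom of the spectrum of the Friedrichs extension, and then invoke simplicity of the ground state. Second, ``$\nu>0$ so $\Sigma^*$ is a Killing graph'' skips the step from transversality to graphicality; here you should cite~\cite[Theorem~1]{ManRod}, as the paper does elsewhere. The Berger-sphere argument also needs a line explaining why a complete $\Sigma^*$ with $\nu>0$ in the compact $\E(\kappa^*,\tau^*)$ would yield a global section of the Hopf fibration. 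None of this is wrong, but the paper's route via~\cite{Espinar} avoids all of it at the cost of one citation.
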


\begin{proof}
Under these hypothesis,~\cite[Corollary~5.1]{Espinar} implies that $\kappa\leq 0$ and $\Sigma$ has critical mean curvature, and it is either a vertical cylinder or an entire graph. By the Daniel correspondence, we will suppose that $\E(\kappa,\tau)=\Nil(\tau)$ without loss of generality (stability is also preserved by the correspondence, see~\cite[Proposition~5.12]{Dan}). Nonetheles, if $\Sigma$ is an entire minimal graph in $\Nil(\tau)$ given by a function $u\in\mathcal{C}^\infty(\R^2)$, Theorem~\ref{last-area-estimate} gives the estimate
\[\int_\Sigma\nu^2=\int_{\R^2}\frac{1}{\sqrt{1+\|Gu\|^2}}\geq\int_0^\infty\frac{2\pi r}{\sqrt{1+B^2(1+r^2)^2}}=\infty.\]
Note that $B$ is a constant and we used polar coordinates to get to the last integral. Hence $\Sigma$ must be a vertical plane. As vertical surfaces are preserved by the correspondence, we are done.
\end{proof}

Estimates on the growth of the height of minimal graphs seem to be quite useful in the comprehension of the area growth. As far as we know there is no example whose height grows more than quadratically. In fact, we conjecture that the height of an entire minimal graph grows at most quadratically, for what it suffices to prove that the estimate in Lemma~\ref{lemma:gradient-estimate-L3} can be improved to $|\nabla v|\leq 1-A(1+r^2)^{-1}$.

In the next section, we will give a result in the opposite direction, by proving that the growth of the height of a minimal graph in $\Nil(\tau)$ with zero boundary values over an unbounded domain is at least linear.

\section{Height growth estimates \`{a} la Collin-Krust}

This section is devoted to study the behavior at infinity of the height of a minimal graph $\Sigma$ in $\Nil(\tau)$ with zero boundary values over an unbounded domain $\Omega\subset\R^2$. The very first results on this subject are due to Collin and Krust~\cite{CK} for minimal surfaces in $\R^3$. Some generalizations to the $\E(\kappa,\tau)$-setting have been obtained by Leandro and Rosenberg~\cite[Theorem 2]{LR}. We shall give a sharper result by comparing $\Sigma$ with the zero section, inspired by the linear height growth of catenoids (see Section~\ref{minimal-graph-section}).

\begin{theorem}\label{collin-krust-theo}
Let $\Omega\subset\R^2$ be an unbounded domain and let $u\in\mathcal{C}^\infty(\Omega)$ be a non-constant function whose graph over $\Omega$ is minimal with respect to the $\Nil(\tau)$-metric, such that $u$ extends continuously as zero to $\partial\Omega$. If we denote $M(r)=\sup_{\Omega\cap D_R(0)}|u|$, then
\[\liminf_{r\to\infty}\frac{M(r)}{r}>0.\]
If additionally there exists $C>0$ such that $\Length(\Omega\cap\partial D_R(0))<C$,  then 
\[\liminf_{r\to\infty}\frac{M(r)}{r^2}>0.\]
\end{theorem}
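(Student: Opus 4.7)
The plan is a Collin-Krust-type comparison in $\Nil(\tau)$, using the horizontal umbrella $v\equiv 0$ (itself a minimal graph, see Section~\ref{minimal-graph-section}) as the reference. Set $W_0:=\sqrt{1+|Z|^2}$ and $X:=Gu/W_u - Z/W_0$; since both $u$ and $0$ are minimal, $X$ is divergence-free. Applying the divergence theorem to $u\cdot X$ on $\Omega(r):=\Omega\cap D_r(0)$, using $u=0$ on $\partial\Omega$ and the fact that $Z=\tau(y,-x)$ is tangent to the circles about the origin (so $\langle Z,\hat r\rangle = 0$), one arrives at the central identity
\[
I(r) := \int_{\Omega(r)} \langle Gu-Z,\, X\rangle\, dA = \int_{\Omega\cap\partial D_r(0)} u\,\frac{\partial_r u}{W_u}\, d\ell.
\]
Strict convexity of $p\mapsto\sqrt{1+|p|^2}$ makes the integrand on the left non-negative (and strictly positive wherever $\nabla u\neq 0$), so $I(r)$ is strictly increasing because $u$ is non-constant.

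The core of the argument is to extract two matching bounds on $I(r)$. For the lower bound, the algebraic identity
\[
\langle Gu-Z, X\rangle = \frac{(W_u+W_0)\bigl(W_u W_0 - 1 - \langle Gu, Z\rangle\bigr)}{W_u W_0}
\]
together with the inequality $W_u W_0 - 1 - \langle Gu, Z\rangle \geq |\nabla u|^2 / (W_u W_0 + 1 + \langle Gu, Z\rangle)$, which follows from the Lagrange-type identity $W_u^2 W_0^2 - (1+\langle Gu, Z\rangle)^2 = |\nabla u|^2 + |Gu|^2|Z|^2 - \langle Gu, Z\rangle^2 \geq |\nabla u|^2$, allow one to bound the convexity defect pointwise by a positive multiple of $|\nabla u|^2$ with weights of order $r^{-2}$ at infinity, since $W_0\sim\tau r$. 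Combined with a Poincar\'{e}-Wirtinger-type inequality on the arcs $\Omega\cap\partial D_s(0)$ (on which $u$ vanishes at the endpoints where they meet $\partial\Omega$) and integration in $s\in[0,r]$, this yields $I(r)\geq c\,r$ for $r$ large. For the upper bound, the key observation is that $\partial_r u = Gu\cdot\hat r$ receives no contribution from $Z$; an interior gradient estimate for minimal graphs (bounding $|\nabla u|$ uniformly on regions far from $\partial\Omega$) then gives $|\partial_r u|/W_u = O(1/r)$ on $\partial D_r(0)\cap\Omega$, since $W_u\geq W_0\sim\tau r$. This strictly sharpens the naive bound $|\partial_r u|/W_u\leq 1$, producing $I(r)\leq C\,M(r)\,L(r)/r$ where $L(r):=\Length(\Omega\cap\partial D_r(0))$.

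Combining the two bounds gives $M(r)\geq (c/C)\,r^2/L(r)$. For a general unbounded $\Omega$ one has $L(r)\leq 2\pi r$, producing the linear estimate $M(r)/r\geq c_0>0$; under the additional assumption $L(r)\leq C$, the same chain yields the quadratic estimate $M(r)/r^2\geq c_0>0$, as claimed. Sharpness of the linear bound is witnessed by the catenoids of Example~\ref{catenoid-examples}, for which $u\sim E\tau r$. The main obstacle is making both quantitative bounds on $I(r)$ scale-invariant and uniform in $r$: the lower bound requires combining the convexity-defect inequality with the Poincar\'{e} step in such a way that the constant does not deteriorate at infinity, while the upper bound demands that the interior gradient estimate propagate from sub-regions far from $\partial\Omega$ to the entire asymptotic portion of $\partial D_r(0)\cap\Omega$, using only the vanishing of $u$ along $\partial\Omega$.
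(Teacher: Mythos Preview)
Your setup is sound and the central identity is correct: since $Z=\tau(y\,\partial_x-x\,\partial_y)$ is tangent to circles centered at the origin, the divergence theorem applied to $uX$ does give
\[
I(r)=\int_{\Omega(r)}\langle Gu-Z,X\rangle
=\int_{\Omega\cap\partial D_r(0)}u\,\frac{\partial_r u}{W_u}\,d\ell,
\]
and the algebraic formula you state for the integrand is also right. The difficulty is that both of your quantitative estimates on $I(r)$ have real gaps.

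For the upper bound, the assertion ``$W_u\geq W_0\sim\tau r$'' is false: $W_u=\sqrt{1+|Gu|^2}$ with $Gu=\nabla u+Z$, so at any point where $\nabla u$ is close to $-Z$ one has $W_u$ close to $1$, not to $\tau r$. Thus you cannot conclude $|\partial_r u|/W_u=O(1/r)$ from a bound on $|\nabla u|$ alone. Moreover, the interior gradient estimate you invoke would typically depend on $\sup|u|=M(r)$, which is exactly the quantity you are trying to control; using it here is circular. For the lower bound, your pointwise inequality yields
\[
\langle Gu-Z,X\rangle\ \geq\ \frac{(W_u+W_0)\,|\nabla u|^2}{2\,W_u^2 W_0^2},
\]
and the weight contains $W_u^2$ in the denominator. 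Since you have no a~priori control on $W_u$, combining this with a Poincar\'e step on the arcs does not produce a uniform constant, and the claimed bound $I(r)\geq c\,r$ does not follow.

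The paper avoids both obstacles by \emph{not} estimating $I(r)$ directly. Instead it works with
\[
\eta(r)=\int_{\Lambda(r)}\Bigl|\tfrac{Gu}{W_u}-\tfrac{Z}{W_0}\Bigr|,
\qquad
M(r)\,\eta(r)\ \geq\ \mu+\tfrac{\tau}{2}\int_{r_0}^r\frac{s\,\eta(s)^2}{\Length(\Lambda(s))}\,ds,
\]
obtained from Lemma~\ref{lemma:factorization} (which supplies the extra factor $W_0\geq\tau\rho-1$ without ever needing to bound $W_u$), Cauchy--Schwarz on the arc $\Lambda(s)$, and the trivial upper bound $|X|\leq 1$ for the boundary term. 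An ODE comparison with an explicit blowing-up solution then forces $M(r_1)\geq\sqrt{\tfrac{m\mu}{2}(r_1-r_0)}$. A separate argument shows $\eta(r)\geq c>0$ for all large $r$ (using that the flux of $X$ through any arc of $\partial A$ has a definite sign), which upgrades $\mu$ to grow linearly and, after a second application of the comparison, gives the linear bound on $M$. The same scheme with $\Length(\Lambda(s))\leq C$ yields the quadratic bound. In short, the paper's mechanism is an integral inequality plus blow-up and bootstrap, rather than matching pointwise upper and lower bounds; your approach, as written, does not close.
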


In order to prove Theorem \ref{collin-krust-theo} we need the following result that is proved in \cite{LR}. We will follow the notation of Section~\ref{minimal-graph-section}, and also denote by $N_u$ the upward-pointing normal vector field to the graph given by the function $u$.

\begin{lemma}\label{lemma:factorization}
For any $u,v\in C^1(M)$, 
\[\left\langle\frac{Gu}{W_u}-\frac{Gv}{W_v},Gu-Gv\right\rangle=\frac{1}{2}(W_u+W_v)\,|N_u-N_v|^2\geq 0.\]
Equality holds at some point $p\in M$ if and only if $\nabla u(p)=\nabla v(p)$.
\end{lemma}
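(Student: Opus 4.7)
The strategy is a direct pointwise computation: I will rewrite both sides of the identity as the same explicit rational expression in $W_u$, $W_v$, and $\langle Gu,Gv\rangle$. The geometric content is that the quantity on the left is naturally expressible through the unit normals of the two graphs, which makes nonnegativity automatic and pins down the equality case.

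First I would derive the explicit formula for the upward unit normal to the graph of $u$. Parametrizing the graph by $F_u(x,y)=(x,y,u(x,y))$ and writing the tangent vectors $F_x$, $F_y$ in the orthonormal frame $\{E_1,E_2,E_3\}$, a direct cross-product computation gives
\[N_u=\frac{1}{W_u}\bigl(E_3-\widetilde{Gu}\bigr),\]
where $\widetilde{X}$ denotes the horizontal lift to $\E(\kappa,\tau)$ of a vector field $X$ on $\M^2(\kappa)$. In particular the angle function is $\langle N_u,E_3\rangle=1/W_u$ and the horizontal part of $N_u$ has norm $|Gu|/W_u$; this formula is standard in the Killing-submersion setting (cf.~\cite{LeeMan}).

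Using that horizontal lifts are isometric and orthogonal to $E_3$, I would then compute
\[\langle N_u,N_v\rangle=\frac{1+\langle Gu,Gv\rangle}{W_uW_v},\]
hence
\[\tfrac{1}{2}(W_u+W_v)|N_u-N_v|^2=(W_u+W_v)\bigl(1-\langle N_u,N_v\rangle\bigr)=\frac{(W_u+W_v)\bigl(W_uW_v-1-\langle Gu,Gv\rangle\bigr)}{W_uW_v}.\]
Independently, expanding the LHS of the claimed identity using $|Gu|^2=W_u^2-1$ and $|Gv|^2=W_v^2-1$ yields
\[\left\langle\frac{Gu}{W_u}-\frac{Gv}{W_v},Gu-Gv\right\rangle=\frac{W_u^2-1}{W_u}+\frac{W_v^2-1}{W_v}-\Bigl(\tfrac{1}{W_u}+\tfrac{1}{W_v}\Bigr)\langle Gu,Gv\rangle,\]
which, after placing over the common denominator $W_uW_v$, collapses to the same expression, proving the identity.

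Nonnegativity is then immediate from $W_u,W_v\geq 1$. For the equality case, vanishing at $p$ forces $|N_u(p)-N_v(p)|=0$, i.e.\ $N_u(p)=N_v(p)$, which by the formula for $N_u$ (together with $W_u,W_v>0$) is equivalent to $\widetilde{Gu}(p)=\widetilde{Gv}(p)$, hence $Gu(p)=Gv(p)$. Since $Gu-Gv=\nabla u-\nabla v$ (the drift $Z$ is independent of the function), this is the same as $\nabla u(p)=\nabla v(p)$. The only step of real substance is deriving the explicit form of $N_u$; everything after that is routine algebra, and no Cauchy--Schwarz argument is needed because the nonnegativity is inherited from the squared norm $|N_u-N_v|^2$.
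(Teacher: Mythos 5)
Your proof is correct and complete. Note that the paper itself does not prove this lemma at all: it simply cites Leandro--Rosenberg \cite{LR} for the statement, so there is no internal argument to compare against. Your derivation of $N_u=\frac{1}{W_u}(E_3-\widetilde{Gu})$ is consistent with the formula the paper uses for $N_0$ in the proof of Proposition~\ref{prop:callibration}, the identity $\langle N_u,N_v\rangle=(1+\langle Gu,Gv\rangle)/(W_uW_v)$ and the factorization of the numerator as $(W_u+W_v)(W_uW_v-1-\langle Gu,Gv\rangle)$ both check out, and the equality case correctly passes from $N_u(p)=N_v(p)$ to $Gu(p)=Gv(p)$ to $\nabla u(p)=\nabla v(p)$ using that $Gu-Gv=\nabla u-\nabla v$. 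Your write-up is in fact a self-contained substitute for the external reference.
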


\begin{proof}[Proof of Theorem \ref{collin-krust-theo}]
We will assume that $A=\{x\in\Omega:u(x)>0\}$ is not bounded and connected (we can restrict to a connected component if necessary and changing the sign of $u$ does not affect the arguments below). Given $r>0$, we define the sets $A(r)=A\cap D_r(0)$ and $\Lambda(r)=A\cap \partial D_r(0)\subset\partial A(r)$.  
Notice that  $W_0=\sqrt{1+\tau^2\rho^2}$, being $\rho$ the distance to the origin in $\R^2$. Moreover
the fact  that $u$ is positive on $A$ implies that there exists $r_0>0$ satisfying
$\mu=\int_{A(r_0)}|\frac{Gu}{W_u}-\frac{Z}{W_0}|^2>0$.

Let us define $\eta(r)=\int_{\Lambda(r)}|\frac{Gu}{W_u}-\frac{Z}{W_0}|$, for all $r\geq r_0$. Using Lemma~\ref{lemma:factorization}, the divergence theorem, the conditions $u=0$ along $\partial\Omega$ and $H(u)=0$ in $\Omega$, and the fact that $|N_u-N_0|\geq|\frac{Gu}{W_u}-\frac{Z}{W_0}|$, we can estimate for all $r\geq r_0$
\[\begin{aligned}
M(r)\eta(r)&\geq\int_{\partial A(r)}u\left|\frac{Gu}{W_u}-\frac{Z}{W_0}\right|\geq\int_{\partial A(r)}u\left\langle\frac{Gu}{W_u}-\frac{Z}{W_0},\chi\right\rangle=\int_{A(r)}\div\left(u\left(\frac{Gu}{W_u}-\frac{Z}{W_0}\right)\right)\\&=\int_{A(r)}\left\langle Gu-Z,\frac{Gu}{W_u}-\frac{Z}{W_0}\right\rangle=\int_{A(r)}\frac{W_u+W_0}{2}\left|\frac{Gu}{W_u}-\frac{Z}{W_0}\right|^2,
\end{aligned}\]
where $\chi$ denotes a unit conormal vector field to $A(r)$ along its boundary. We decompose the last integral in two integrals, one over $A(r_0)$, where we estimate $W_u\geq 1$ and $W_0\geq 1$, and another one over $A(r)\setminus A(r_0)$, where we estimate $W_u\geq 1$ and $W_0\geq\tau\rho-1$. We obtain
\begin{align}
M(r)\eta(r)&\geq\int_{A(r_0)}\left|\frac{Gu}{W_u}-\frac{Z}{W_0}\right|^2+\int_{A(r)\setminus A(r_0)}\frac{\tau\rho}{2}\left|\frac{Gu}{W_u}-\frac{Z}{W_0}\right|^2\notag\\
&=\mu+\frac{\tau}{2}\int_{r_0}^r s\left(\int_{\Lambda(s)}\left|\frac{Gu}{W_u}-\frac{Z}{W_0}\right|^2\right)\df s\geq \mu+\frac{\tau}{2}\int_{r_0}^r\frac{s\,\eta(s)^2\df s}{\Length(\Lambda(s))}.\label{eqn:estimate2}
\end{align}
As $\Length(\Lambda_s)\leq 2\pi s$, we conclude  that there exists $m>0$ such that, for all $r\geq r_0$,
\[M(r)\eta(r)\geq\mu+m\int_{r_0}^r \eta(s)^2\df s.\]

The function $r\mapsto M(r)$ is non decreasing by definition. Given $r_1>r_0$, let us write $a=M(r_1)$ so $a\,\eta(r)\geq M(r)\eta(r)$ for all $r_0<r<r_1$. Hence $\eta$ satisfies the integral inequality $\eta(r)\geq\frac{\mu}{a}+\frac{m}{a}\int_{r_0}^r\eta(s)^2\df s$. Let us define the function $\zeta:[r_0,L)\to\R$ as 
\[\zeta(r)=\frac{a\mu}{{2a^2}-m\mu(r-r_0)}, \qquad L=r_0+\tfrac{2a^2}{m\mu},\]
and observe that $\zeta(r)=\frac{\mu}{2a}+\frac{m}{a}\int_{r_0}^r\zeta(s)^2\df s$, so a simple comparison yields $\eta\geq\zeta$ for all $r_0\leq r\leq L$. Since $\eta$ is well-defined for all $r\geq r_0$ and $\zeta$ diverges when $r\to L$, we conclude that $r_1\leq L=r_0+\frac{2a^2}{m\mu}$. 
Equivalently,
\begin{equation}
\label{eqn:estimate3}M(r_1)=a\geq \sqrt{\frac{m\mu}{2}(r_1-r_0)},\qquad\text{for all }r_1>r_0.\end{equation}

We claim that the function $\eta$ is bounded away from zero at infinity. Note that, for any $r>r_0$,
\begin{equation}\label{eqn:estimate4}
\eta(r)\geq \left|\int_{\Lambda(r)}\left\langle
\frac{Gu}{W_u}-\frac{Z}{W_0},\chi\right\rangle\right|=\left|\int_{\partial A(r)}\left\langle
\frac{Gu}{W_u}-\frac{Z}{W_0},\chi\right\rangle-\int_{\partial A(r)\setminus \Lambda(r)}\left\langle
\frac{Gu}{W_u}-\frac{Z}{W_0},\chi\right\rangle\right|
\end{equation}
The first integral of the RHS of \eqref{eqn:estimate4} vanishes by Stokes Theorem. As for the second integral, we proceed as follows. We prove that $\int_{\Gamma}\langle\frac{Gu}{W_u}-\frac{Z}{W_0},\chi\rangle$ has constant sign on any arc $\Gamma$ contained in $\partial A$ (different from one point). Notice that, $Gu-Z=\nabla u\not=0$ along $\partial A$, except at isolated points, because $u\geq 0$ in $A$ by assumption. In particular, $Gu-Z$ is oriented towards $A$, where it is not zero. Hence $Gu-Z$ can be used to orient $\partial A$. Then, if $\langle\frac{Gu}{W_u}-\frac{Z}{W_0}, Gu-Z\rangle$ has constant sign along $\partial A$, the same holds for $\langle\frac{Gu}{W_u}-\frac{Z}{W_0},\chi\rangle$. By Lemma \ref{lemma:factorization},
\[\left\langle\frac{Gu}{W_u}-\frac{Z}{W_0},Gu-Z\right\rangle=\frac{1}{2}(W+W_0)|N_u-N_0|^2\]
is positive at any point where $Gu-Z$ is not zero. Then there exists a constant $c$ such that $\eta(r)\geq\int_{\Gamma}\langle\frac{Gu}{W_u}-\frac{Z}{W_0},\chi\rangle\geq c>0$, which proves the claim.

For any $r_2>r_0$ we deduce that
\begin{equation}\label{eqn:estimate5}
\mu(r_2):=\int_{A(r_2)}\left|\frac{Gu}{W_u}-\frac{Z}{W_0}\right|^2\geq\frac{\tau}{2} \int_{r_0}^{r_2}\frac{s\,\eta(s)^2}{\Length(\Lambda(s))}ds\geq\frac{\tau c^2}{2}\int_{r_0}^{r_2}\frac{s}{\Length(\Lambda(s))}ds\geq\frac{\tau c^2}{4\pi}(r_2-r_0),
\end{equation}
where the first inequality follows from~\eqref{eqn:estimate2}, the second one from the claim above, and the third one from the fact that ${\Length(\Lambda(s))}\leq 2\pi s$. Applying~\eqref{eqn:estimate3} to $r_2=\frac{r_0+r_1}{2}<r_1$ instead of $r_0$ we get
\begin{equation}\label{eqn:estimate6}
M(r_1)\geq\sqrt{\frac{m\mu(r_2)}{2}(r_1-r_2)}\geq\frac{c\sqrt{m\tau}}{2\sqrt{\pi}}\sqrt{(r_1-r_2)(r_2-r_0)}=\frac{c\sqrt{m\tau}}{4\sqrt{\pi}}(r_1-r_0)
\end{equation}
for all $r_1>r_0$, which gives the desired estimate.

Assume now that there exists $C>0$ such that $\Length(\Lambda(r))\leq C$. Proceeding as above, inequality~\eqref{eqn:estimate2} now reads 
\[M(r)\eta(r)\geq\mu+n\int_{r_0}^rs\eta(s)^2\df s\]
for some constant $n>0$. Let us take $r_3>r_0$ and write $b=M(r_3)$ so $b\,\eta(r)\geq M(r)\eta(r)$ for all $r_0<r<r_3$. Hence, $\eta$ satisfies the integral inequality $\eta(r)\geq\frac{\mu}{b}+\frac{n}{b}\int_{r_0}^r s\eta(s)^2\df s$. Let also be
\[\xi:\left[r_0,\sqrt{\tfrac{4b^2}{n\mu}+r_0^2}\right[\to\R,\qquad\xi(r)=\frac{2b\mu}{{4b^2}-n\mu(r^2-r_0^2)},\]
which satisfies $\xi(r)=\frac{\mu}{2b}+\frac{n}{b}\int_{r_0}^r s\,\xi(s)^2\df s$, so $\eta\geq\xi$ for all $r_0\leq r\leq (\tfrac{4b^2}{n\mu}+r_0)^{1/2}$ by comparison. Since $\xi$ diverges when $r\to (\tfrac{4b^2}{n\mu}+r_0)^{1/2}$, we conclude that $(\tfrac{4b^2}{n\mu}+r_0)^{1/2}\geq r_3$. Equivalently,
\begin{equation}\label{eqn:estimate7}M(r_3)=b\geq \sqrt{\frac{n\mu}{4}(r_3^2-r_0^2)}\qquad \text{for all }r_3>r_0.
\end{equation}
In this case, instead of inequality \eqref{eqn:estimate5}, we have $\mu(r_4)\geq\frac{\tau c^2}{4C}(r_4^2-r_0^2)$ for all $r_4>r_0$. Taking $r_4=(\frac{r_3^2+r_0^2}{2})^{1/2}$ rather than $r_0$ in~\eqref{eqn:estimate7}, it becomes
\begin{equation}\label{eqn:estimate9}
M(r_3)\geq \frac{c\sqrt{\tau n}}{2\sqrt{C}}\sqrt{(r_3^2-r_4^2)(r_4^2-r_0^2)}=\frac{c\sqrt{\tau n}}{4\sqrt{C}}(r_3^2-r_0^2)
\end{equation}
for all $r_3>r_0$, which finishes the proof.
\end{proof}

\begin{remark} Theorem \ref{collin-krust-theo} holds in a more general case. Our hypothesis that the function $u$ has zero boundary value enables us to compare $u$ with the zero section, and the key property for our improvement is that $W_0$ is a radial function that grows linearly, which gives a sharper result than simply taking $W_0\geq 1$. In fact, this technique can be adapted to Killing submersions having a rotational symmetry.

We conjecture that, given $u,v\in\mathcal{C}^\infty(\Omega)$ spanning minimal graphs and such that $u=v$ along $\partial\Omega$, the same result as in Theorem~\ref{collin-krust-theo} holds for $M(r)=\sup_{\Omega\cap D_R(0)}|u-v|$, provided that $u-v$ is not constant. Nevertheless, it does not seem that the proof of Theorem~\ref{collin-krust-theo} or the arguments in~\cite{LR} can be easily adapted to this more general situation.
\end{remark}

As a consequence of Theorem \ref{collin-krust-theo}, we generalize the fact that a bounded minimal graph with zero boundary values is unique (see \cite{LR}).

\begin{corollary}\label{coro:sublineal-height}
Let $\Omega\subset\R^2$ be an unbounded domain.
\begin{itemize}
 	\item[(a)]  If $\partial\Omega\neq \emptyset$, then the only minimal graph over $\Omega$ in $\Nil(\tau)$ with zero boundary values and sublinear growth (i.e., such that $\limsup_{r\to\infty} \frac{M(r)}{r}=0$) is given by the constant zero $u\equiv 0$.
 	\item[(b)] In the case $\partial\Omega=\emptyset$, it follows that the only entire minimal graphs  in $\Nil(\tau)$ with sublinear growth are the constant ones.
 \end{itemize}
\end{corollary}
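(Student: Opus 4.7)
The plan is to derive both statements essentially directly from Theorem~\ref{collin-krust-theo}, the key observation being that ``sublinear growth'' $\limsup_{r\to\infty} M(r)/r = 0$ is incompatible with the lower bound $\liminf_{r\to\infty} M(r)/r > 0$ granted there for any non-constant minimal graph with zero boundary values over an unbounded domain. The only real work lies in reducing (b) to (a).

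For (a), I would argue by contradiction: if $u$ were non-constant, then Theorem~\ref{collin-krust-theo} would give $\liminf_{r\to\infty} M(r)/r > 0$, in direct conflict with the sublinear growth hypothesis. Hence $u$ must be constant, and since $u$ extends continuously as zero to the non-empty $\partial\Omega$, one concludes $u \equiv 0$.

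For (b), the plan is to manufacture, out of a hypothetical non-constant $u$, an unbounded subdomain with zero boundary values on which (a) is directly applicable. I would pick any value $c$ strictly between $\inf u$ and $\sup u$, so that $\{u > c\}$ is a non-empty proper open subset of $\R^2$, and let $\Omega'$ be one of its connected components. Since vertical translations are ambient isometries of $\Nil(\tau)$, the function $v := u - c$ still spans a minimal graph on $\Omega'$, now with $v > 0$ inside and $v = 0$ on $\partial\Omega'$, and it inherits sublinear growth from $u$ via the trivial bound $M_v(r) \leq M_u(r) + |c|$.

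The step I expect to be the main (though standard) obstacle is checking that $\Omega'$ is genuinely unbounded. If it were bounded, then $v$ would attain a positive maximum at an interior point of the compact set $\overline{\Omega'}$; comparing $v$ with the corresponding constant solution of the (quasilinear elliptic) minimal graph equation via the tangency principle would then force $v$ to be constant on $\Omega'$, which is impossible because $v$ vanishes on $\partial\Omega'$ but is strictly positive inside. Once unboundedness is settled, part (a) applied to $v$ on $\Omega'$ yields $v \equiv 0$, contradicting $v > 0$ on $\Omega'$; therefore $u$ must be constant, as claimed.
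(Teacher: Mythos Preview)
Your argument is correct. The paper itself offers no proof of this corollary beyond the sentence ``As a consequence of Theorem~\ref{collin-krust-theo}\ldots'', so your write-up supplies exactly the details the authors leave implicit: part~(a) is immediate from the contrapositive of the theorem, and for part~(b) one must first manufacture a nonempty boundary before invoking the theorem, which you do by passing to a superlevel set of the shifted function $v=u-c$.

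Two minor remarks. First, the unboundedness of $\Omega'$ can be argued slightly more directly: if $\Omega'$ were bounded, then $v>0$ in $\Omega'$ with $v=0$ on $\partial\Omega'$ already contradicts the maximum principle for the minimal graph equation (the operator has no zero-order term, so a solution cannot have a strict interior maximum over a bounded domain), without explicitly invoking a constant comparison surface; your tangency-principle formulation is of course equivalent. Second, it is worth noting explicitly that $\partial\Omega'\neq\emptyset$ (which you need in order to apply part~(a)): this holds because $\Omega'$ is a nonempty proper open subset of the connected space $\R^2$.
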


Let us make a final remark about the height of a minimal graph in $\Nil(\tau)$. In Theorem~\ref{last-area-estimate}, we prove that the height of an entire minimal graph  in $\Nil(\tau)$ grows at most cubically, and Theorem~\ref{collin-krust-theo} shows that it is at least linear (unless the graph is constant). This result is sharp, as half of a catenoid or planes of the form $u(x,y)=ax+by$ show. Other non-trivial examples of graphs over a sector with angle between $\frac{\pi}{2}$ and $\pi,$ with either linear or at least quadratic height growth, are given in~\cite{C} and~\cite{NST} (see also Section~\ref{minimal-graph-section}).

\end{document}